\def\ftoday{le \space\number\day \space\ifcase\month\or
  janvier\or f\'evrier\or mars\or avril\or mai\or juin\or
  juillet\or ao\^ut\or septembre\or octobre\or novembre\or d\'ecembre\fi
  \space\number\year}
\newtheorem{theorem}{Theorem}
\newtheorem{corollary}[theorem]{Corollary}
\newtheorem{definition}[theorem]{Definition}
\newtheorem{lemma}[theorem]{Lemma}
\newtheorem{proposition}[theorem]{Proposition}
\theoremstyle{definition}
\newtheorem{remark}[theorem]{Remark}
\newcommand{\re}[1]{(\ref{#1})}
\newcommand{\rl}[1]{Lemma~\ref{#1}}
\newcommand{\crb}{\mathcal{V}}
\newcommand{\inp}[2]{\left\langle #1 , #2 \right\rangle}
\newcommand{\vnorm}[1]{\left\|  #1 \right\|}
\newcommand{\Cn}{\mathbb{C}^n}
\newcommand{\Cd}{\mathbb{C}^d}
\newcommand{\C}{\mathbb{C}}
\newcommand{\CN}{\mathbb{C}^N}
\newcommand{\Rd}{\mathbb{R}^d}
\newcommand{\R}{\mathbb{R}}
\newcommand{\N}{\mathbb{N}}
\newcommand{\dop}[1]{\frac{\partial}{\partial #1}}
\newcommand{\vardop}[3]{\frac{\partial^{|#3|} #1}{\partial {#2}^{#3}}}
\newcommand{\fps}[1]{\C\llbracket#1\rrbracket}
\newcommand{\fpstwo}[2]{#1\llbracket #2\rrbracket}
\newcommand{\cps}[1]{\C\{#1\}}
\newcommand{\todo}[1]{}
\newlength{\extendaxesby}\setlength{\extendaxesby}{.4cm}
\DeclareMathOperator{\ord}{ord}
\DeclareMathOperator{\id}{id}
\DeclareMathOperator{\imag}{Im}
\DeclareMathOperator{\real}{Re}
\DeclareMathOperator{\image}{image}
\DeclareMathOperator{\Sym}{Sym}
\def\cF{\mathcal{F}}
\def\cS{\mathcal{S}}
\def\cT{\mathcal{T}}
\def \ank {\mathbb A_n^k}
\begin{document}

\title{Convergence of the Chern-Moser-Beloshapka normal forms }
\author{Bernhard Lamel\thanks{%
University of Wien E-mail : \texttt{bernhard.lamel@univie.ac.at}} \hspace{%
1mm}, and Laurent Stolovitch
\thanks{%
CNRS and Universit\'e C\^{o}te d'Azur, CNRS, LJAD,Parc Valrose
06108 Nice Cedex 02, France. E-mail : \texttt{%
stolo@unice.fr}.%
Research of B. Lamel and L.Stolovitch was supported by ANR grant "ANR-14-CE34-0002-01" and the FWF (Austrian Science Foundation) grant I1776 for the international cooperation project "Dynamics and CR geometry". Research of Bernhard Lamel was also supported by the Qatar Science Foundation, NPRP 7-511-1-098.} }
\maketitle

\begin{abstract}
In this article, we first describe a 
normal form of real-analytic, Levi-nondegenerate submanifolds of $\CN$ of codimension $d\geq 1$ under the action of formal biholomorphisms, that is, of perturbations of Levi-nondegenerate hyperquadrics. We give a sufficient condition on the formal normal form that ensures that the normalizing transformation to this normal form is holomorphic. We show that our techniques can be adapted in 
the case $d=1$ in order to obtain a new and direct proof of Chern-Moser normal form theorem.
\end{abstract}

\section{Introduction}

In this paper, we study normal forms for real-analytic, Levi-nondegenerate
manifolds of $\CN$. 
A real submanifold $M\subset \CN$ (of real codimension $d$) is given, 
locally at a point $p\in M$, in suitable coordinates $(z,w)\in\Cn\times\Cd = \CN$, by a defining function of the form 
\[ \imag w = \varphi(z,\bar z, \real w), \]
where $\varphi \colon \Cn\times \Rd\to \Rd$ is a germ of a real analytic
map satisfying $\varphi(0,0,0)=0$, and $\nabla \varphi (0,0,0) = 0$. Its 
natural second order invariant is its Levi form $\mathcal{L}_p$: 
This  is a natural Hermitian vector-valued form,
 defined on $\mathcal{V}_p =\C T_p M \cap \C T^{(0,1)}_p \CN$ as 
\[ \mathcal{L}_p (X_p ,Y_p) = [X_p , \bar Y_p] \mod \crb_p \oplus \bar \crb_p 
\quad \in \faktor{\C T_p M}{\crb_p \oplus \bar \crb_p}.  \] We say that $M$ is Levi-nondegenerate (at $p$) if the Levi-form 
$\mathcal{L}_p$ is a nondegenerate, vector-valued Hermitian form, 
and is of full rank. 

Let us recall that we say that $\mathcal{L}_p$ is {\em nondegenerate} if it satisfies that 
$\mathcal{L}_p (X_p, Y_p) = 0$ for all $Y_p\in \crb_p$ implies $X_p = 0 $
and that we say that $\mathcal{L}_p$ is of full rank, if
 $\theta (\mathcal{L}_p (X_p , Y_p)) = 0$ for all 
$X_p, Y_p \in \crb_p$ and for 
$\theta \in T^0_p M = \crb_p^\perp \cap \bar \crb_p^\perp$ (where $\crb_p^\perp \subset \C T^*M$ is the 
holomorphic cotangent bundle) implies $\theta = 0$.  

The typical model for this situation is a {\em hyperquadric},
that is, a manifold of the form 
\[ \imag w = Q(z,\bar z) = \begin{pmatrix}
   Q_1 (z,\bar z) \\ \vdots \\  Q_d (z,\bar z)
\end{pmatrix} = \begin{pmatrix}
  \bar z^t J_1 z \\ \vdots \\ \bar z^t J_d z
\end{pmatrix}, \]
where each $J_k$ is a Hermitian $n\times n$ matrix, and the conditions of nondegeneracy 
and full rank are expressed by 
\begin{equation}\label{e:nondegenerate} \bigcap_{k=1}^d \ker J_k = \{0\}, \quad \sum_{k=1}^d \lambda_k J_k = 0 \Rightarrow \lambda_k = 0, \quad k =1,\dots, d. \end{equation}
The defining equation of the hyperquadric becomes {\em quasihomogeneous} of degree 1, if 
we endow $z$ with the weight $1$ and $w$ with the weight $2$, 
which we shall do from now on. A Levi-nondegenerate manifold can 
thus, at each point, be thought of as a ``{higher order }deformation'' of a hyperquadric, that is, 
their defining functions $\imag w = \varphi(z, \bar z)$ can be 
rewritten as 
\[ \imag w = Q (z,\bar z) + \Phi_{\geq 3} (z,\bar z, \real w),\]
where $\Phi_{\geq 3}$ only contains quasihomogeneous terms of order 
at least $3$.

We are going to classify  germs of such real analytic manifolds under
the action of the group of germs of biholomorphisms of $\CN$.
The classification problem for Levi-nondegenerate manifolds
has a long history, especially in the case of hypersurfaces ($d=1$). 
It was first studied (and solved) for hypersurfaces 
in $\C^2$ by Elie Cartan in a series 
 of papers \cite{Cartan:1933ux,Cartan:1932ws} in the early 
1930s, using his theory of moving frames. Later on, 
Tanaka \cite{Tanaka:1962ti} and Chern and Moser \cite{Chern:1974wu}
solved the problem for Levi-nondegenerate hypersurfaces in $\C^n$. 
They used differential-geometric approaches, but also, 
in the case of Chern-Moser an approach coming from 
the theory of dynamical systems: finding a normal form for 
the defining function, or equivalently, finding a special coordinate
system for the manifold. We refer to the papers by Vitushkin
\cite{MR786085,MR850493}, the book by Jacobowitz \cite{MR1067341}, the survey by Huang \cite{Huang-survey} and the survey by Beals, Fefferman, and Grossman \cite{Beals:1983kd} 
in which the geometric and analytic significance 
of the Chern-Moser normal form are discussed.

Our paper takes up a very classical 
problem with a new tool, and gives 
a formal normal form for Levi-nondegenerat
real analytic manifolds which 
under a rather simple condition (see 
\eqref{e:crucial})  can be shown to be
convergent. Recent advances in normal 
forms for real submanifolds of 
complex spaces with respect 
to holomorphic transformations have 
been significant: We would like to cite
in this context the recent works of  Huang and Yin \cite{MR2501295,MR3498915,MR3600082}, the second author and Gong
\cite{MR3570294}, and
Gong and Lebl \cite{MR3336931}. 

We will discuss our construction and
the difficulties involved with it by 
contrasting it to the Chern-Moser case. 
Before we describe the Chern-Moser 
normal form, 
let us comment shortly on why the differential geometric 
approach taken by Tanaka and Chern-Moser works in 
the case of hypersurfaces. The reason for this
is that actually locally, the geometric information induced 
by the (now scalar-valued!) Levi-form can be reduced to 
its signature and therefore stays, in a certain sense ``constant''. 
This makes it possible to study the structure using tools 
which are nowadays formalized under the umbrella of {\em parabolic geometry}--for further information, we 
refer the reader to the book of Cap and Slovak \cite{Cap:2009wu}. 
In particular, every Levi-nondegenerate hypersurface can be endowed 
with a structure bundle carrying a Cartan connection and 
an associated intrinsic curvature. However, in the 
case of Levi-nondegenerate manifolds of higher codimension, our 
basic second order invariant,  the 
vector-valued Levi form $\mathcal{L}_p$, has more invariants than 
just the simple integer-valued signature of a scalar-valued form, and 
its behaviour thus can (and in general will) change dramatically
with $p$. Of course, if it is nondegenerate at the given point $0$, 
 it stays so in 
 neighbourhood of it. There 
have thus been rather few circumstances 
in which the geometric approach has been successfully 
applied to Levi-nondegenerate manifolds of higher codimension, 
such as in the work of Schmalz, Ezhov, Cap, and others
 (see \cite{Schmalz:2006gt} and references therein). 

 In our paper, we take the different (dynamical systems inspired) approach taken by Chern-Moser,
 who introduced a {\em convergent normal form} for the problem. They 
 prescribe a space of {\em normal forms} 
 $\mathcal{N}_{CM} \subset \fps{ z, \bar z , s} $ such 
 that for each element of the infinitesimal automorphism algebra
 of the model hyperquadric $\imag w = \bar z^t J z$, one obtains
 a unique formal choice $(z,w)$ 
 of coordinates in $\CN = \Cn \times \C$ in which the defining 
 equation takes the form 
 \[ \imag w = \bar z^t J z + \Phi(z,\bar z, \real w),\]
 with $\Phi \in \mathcal{N}_{CM}$. It turns out (after
  the fact) that the coordinates are actually {\em holomorphic coordinates}, not only formal ones, which is the reason why 
  we say that the Chern-Moser normal form is {\em convergent}. 
  Let us shortly note that the dependence on the infinitesimal
  automorphism algebra is actually necessary; after all, some 
  of the hypersurfaces studied have a normal form 
  which still carries some symmetries (in particular, the 
  normal form of the model quadric will be the model quadric itself). 

The normal form space of Chern and Moser is described as follows. 
One needs to introduce the {\em trace operator} 
\[ T = \left(\dop{\bar z}\right)^t J \left(\dop{ z}\right) \] 
and the homogeneous parts in $z$ and $\bar z$ of a series 
$\Phi(z, \bar z, u) = \sum_{j,k} \Phi_{j,k} (z,\bar z, u)$,
 where 
  $\Phi_{j,k} (tz , s\bar z, u)= t^j s^k \Phi_{j,k} (z,\bar z, u)$; 
  $\Phi_{j,k}$ is said to be of {\em type} $(j,k)$.

We then say that $\Phi \in \mathcal{N}_{CM}$ if it satisfies the 
following (Chern-Moser) normal form conditions: 
\[ \begin{aligned}
\Phi_{j,0} &= \Phi_{0,j} = 0, &j\geq 0; \\
\Phi_{j,1} & = \Phi_{1,j} = 0, & j \geq 1; \\ 
T \Phi_{2,2} &= T^2 \Phi_{2,3} = T^3 \Phi_{3,3} = 0 . 
\end{aligned} \]
There are a number of aspects particular to the 
case $d=1$
which allow Chern and Moser to construct, based on 
these conditions (which arise 
rather naturally from a linearization of the 
problem with respect to the ordering by type), a convergent 
choice of coordinates. In particular, Chern and 
Moser are able to restate much of their problem in terms of 
ODEs, which comes from the fact that there is only one transverse 
variable when $d=1$; in particular, existence and regularity of solutions is guaranteed. In higher codimension, this changes dramatically, and we obtain systems of PDEs; neither do we 
a priori know that those are solvable nor do we know anything
about the regularity of their solutions (should they exist).
Our normal form has to take this into account.

Another aspect of the problem,  which really changes dramatically from the 
case $d=1$ to $d>1$, is the second line of the normal form 
conditions above: We cannot impose that $\Phi_{1,j} = \Phi_{j,1} = 0$
for $j\geq 1$, as those terms - it turns out - {\em actually
carry invariant information}. We shall however present 
a rather simple normal form, defined by equations which 
one can write down. 

We should note at this point that some parts of the problem
associated to a formal normal form have  already been studied
by Beloshapka \cite{Beloshapka:1990ib}. In there, a 
linearization of the problem is given, and a formal normal form
construction (with a completely arbitrary normal form space) is
discussed. However, for applications, a choice of a normal form 
space which actually gives rise to a convergent normal form
is of paramount importance, and only in very special 
circumstances (codimension $2$ in $\C^4$) there have 
been resolutions to this problem. 

The failure of a simple normalization of the 
terms of type $(1,j)$ and $(j,1)$ in the higher codimension case 
has more and subtle consequences which destroy much of the 
structure which allows one to succeed in the case $d=1$. We are 
able to overcome some of these problems 
by using a new technique from dynamical systems 
introduced by the second author \cite{Stolovitch:2014tk}. 
In that paper, one can already find an illustration of a kind 
of ``{higher codimension} Chern-Moser failure'' in a quite different but easier problem. 
It concerns normal forms of singularities of holomorphic functions. 
If the singularity is isolated, then usual proofs (Arnold-Tougeron) 
of the locally holomorphic conjugacy to a normal form reduces to the 
existence of holomorphic solutions of ODE's depending on a parameter 
(issued from ``la m\'ethode des chemins''). If the singularity is not 
isolated, there is no way to obtain such an ODE but the main 
result of  \cite{Stolovitch:2014tk}(Big denominator theorem) allows to solve the problem directly. 

In this paper we shall first discuss the convergent solution 
of a ``restricted'' (yet still infinite-dimensional) 
normalization problem:
Given a Levi-nondegenerate hyperquadric $\imag w = Q(z,\bar z)$, 
for perturbations of the form 
\[ \imag w = Q(z, \bar z) + \Phi_{\geq 3} (z,\bar z, \real w),\]
find a {\em formal} normal form.  
Our first main result can therefore be thought 
of as a concrete realization of Beloshapka's 
construction of an abstract normal form in this 
setting:
\begin{theorem}\label{thm:main1}
Fix a nondegenerate form of full rank $Q(z,\bar z)$ on $\Cn$ with 
values in $\Cd$, i.e. a map of the form $Q(z,\bar z) = (\bar z^t J_1 z, \dots ,\bar z^t J_d z)$ with the $J_k$ satisfying \eqref{e:nondegenerate}. Then there exists
a subspace $\hat{ \mathcal{N}}_f \subset \fps{z,\bar z , \real w}$ (explicitly given in
\eqref{e:defineNf} below) 
such that the following holds. 
Let $M$ be given near $0\in \CN$ by an equation of the form 
\[\imag w' = Q(z',\bar z',  ) + \tilde \Phi_{\geq 3} (z', \bar z' ,\real w'), \]
with $\tilde \Phi\in \fps{z,\bar z , \real w} $.
Then there exists a unique formal biholomorphic map of the form 
$H(z,w)= (z + f_{\geq 2} , w + g_{\geq 3})$ such that in the new (formal) coordinates
$(z,w)=H^{-1} (z',w')$ the manifold $M$ is given by an equation of the 
form 
\[ \imag w= Q(z,\bar z) + \Phi_{\geq 3} (z,\bar z, \real w) \]
with $\Phi_{\geq 3} \in \hat{\mathcal{N}}_f$. 
\end{theorem}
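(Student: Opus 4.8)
The plan is to construct $H$ by the standard normal-form induction with respect to the quasihomogeneous grading in which $z$ has weight $1$ and $\real w$ has weight $2$, so that $Q$ has order $2$ and the perturbation starts at order $3$. Writing $H = \id + (f,g)$ with $f = \sum_{m\geq 2} f^{(m)}$ and $g = \sum_{m\geq 3} g^{(m)}$ decomposed into quasihomogeneous pieces, I would substitute $(z',w')=H(z,w)$ into $\imag w' = Q(z',\bar z') + \tilde\Phi_{\geq 3}(z',\bar z',\real w')$ and compare, order by order in the weight, with the target $\imag w = Q(z,\bar z) + \Phi_{\geq 3}(z,\bar z,\real w)$. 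The reason for fixing the linear part of $H$ to be the identity is that $Q$ is then reproduced exactly at weights $\leq 2$, so that the first nontrivial comparison occurs at weight $3$ and involves only the perturbations.

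Collecting the terms of weight $m$ isolates a linear equation for the unknowns of that order. Restricted to the quadric (where $w = \real w + iQ$), the leading contribution of $(f^{(m-1)},g^{(m)})$ is governed by the homological operator
\[ L\big(f,g\big) = \Big( \imag g_k - 2\,\real\big(\bar z^t J_k\, f\big)\Big)_{k=1}^d, \]
which is precisely the operator whose kernel is the space of infinitesimal automorphisms of $Q$ tangent to it; everything else at weight $m$ is a known polynomial $R^{(m)}$ built from $\tilde\Phi$ and from the components of $f$ and $g$ of weight $<m$. The homological equation at weight $m$ thus reads
\[ L\big(f^{(m-1)}, g^{(m)}\big) + \Phi^{(m)} = R^{(m)}, \qquad \Phi^{(m)} \in \hat{\mathcal N}_f, \]
an identity between quasihomogeneous, $\Cd$-valued real functions of $(z,\bar z,\real w)$ of weighted degree $m$. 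Since $R^{(m)}$ depends only on data of strictly lower weight, this is a genuine recursion, and solving it for every $m\geq 3$ produces the formal series $f$, $g$ and $\Phi$.

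The heart of the matter, and the step I expect to be the main obstacle, is the purely linear-algebraic assertion that at every weight the explicit space $\hat{\mathcal N}_f$ of \eqref{e:defineNf} is a complement to the range of $L$, and that the solution $(f^{(m-1)},g^{(m)})$ can be normalized uniquely. Concretely I would analyze $L$ through the family of trace operators $T_k = \left(\dop{\bar z}\right)^t J_k \left(\dop{z}\right)$, decompose each homogeneous component by its bidegree $(p,q)$ in $(z,\bar z)$, and show that the nondegeneracy and full-rank conditions \eqref{e:nondegenerate} force these trace maps to have exactly the right ranks on the relevant polynomial spaces. This is where $d>1$ genuinely departs from Chern--Moser: $L$ is now a coupled system rather than a scalar operator reducible to ODEs, the terms of type $(1,j)$ and $(j,1)$ cannot all be removed, and so $\hat{\mathcal N}_f$ must retain precisely the residual terms that \eqref{e:defineNf} isolates. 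I would verify the complementarity component by component, treating separately the finitely many weights at which $L$ fails to be injective.

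The remaining point is uniqueness of $H$. For all but finitely many weights, $L$ is injective on the admissible transformations (those with $f$ of order $\geq 2$ and $g$ of order $\geq 3$), so both $\Phi^{(m)}$ and $(f^{(m-1)},g^{(m)})$ are determined outright once $\hat{\mathcal N}_f$ is known to be a complement. At the finitely many low weights where $\ker L$ is nontrivial --- exactly those carrying the residual positive-weight isotropy of $Q$ --- the normal-form conditions built into \eqref{e:defineNf} are what consume this freedom, pinning down $(f^{(m-1)},g^{(m)})$ as well. Assembling the uniquely determined pieces yields the unique formal $H = (z+f_{\geq 2}, w+g_{\geq 3})$ with $\Phi_{\geq 3}\in\hat{\mathcal N}_f$. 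Because the entire argument takes place at the level of formal power series, no convergence estimates enter here; the analytic content --- that this $H$ is in fact holomorphic --- is the separate matter governed by condition \eqref{e:crucial}.
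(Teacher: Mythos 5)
Your overall skeleton---weight-by-weight homological induction, the operator $L$ whose kernel is the infinitesimal automorphism algebra of the quadric, and the observation that any complement to $\image L$ at each weight yields a formal normal form---is exactly the general framework the paper sets up in \cref{sec:initialquadric} and attributes to Beloshapka; the paper itself treats that part as known. The actual content of \cref{thm:main1} is the step you defer: proving that the \emph{concrete} space $\hat{\mathcal{N}}_f$ of \eqref{e:defineNf} is such a complement. Your proposal announces this (``I would analyze $L$ through the trace operators\dots I would verify the complementarity component by component'') without carrying it out, so the heart of the theorem is missing. Moreover, the plan as sketched is misaligned with the structure of $\hat{\mathcal{N}}_f$: its defining conditions are \emph{not} bidegree-by-bidegree conditions. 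For instance, $-6\Delta^*\Phi_{1,1}+(\Delta^*)^3\Phi_{3,3}=0$ couples the $(1,1)$ and $(3,3)$ components, and \eqref{e:normalize23} couples $\Phi_{2,3}$ with $\Phi_{1,2}$, precisely because a single unknown ($f_0$, $f_1$, or $\real g_0$) enters several bidegree equations simultaneously. The paper's route is structured accordingly: it first eliminates $\imag g_0$ and $g_p$ via the $(p,0)$-equations and $f_p$, $p\geq 2$, via $\mathcal{K}^*$ applied to the $(p,1)$-equations (using the injectivity of $\mathcal{K}$, \cref{lem:constantbound}), then assembles the remaining unknowns $(f_0,\real g_0,f_1)$ into one operator $\mathcal{L}$ mapping into $\mathcal{R}_{3,2}\oplus\mathcal{R}_{1,1}\oplus\mathcal{R}_{2,2}\oplus\mathcal{R}_{3,3}$, and \emph{defines} the normal form space as $\ker\mathcal{L}^*$ with respect to the Fischer product, so that complementarity ($\image\mathcal{L}\oplus^{\perp}\ker\mathcal{L}^*$) is automatic on each finite-dimensional graded piece; the induction then runs on homogeneity in $u$, using the triangular dependence of the right-hand sides. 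No rank computation for the Chern--Moser traces $T_k$ enters; the relevant operators are $\mathcal{K}^*$, $\bar{\mathcal{K}}^*$, $\Delta^*$.

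The second gap is uniqueness. You claim that at the finitely many weights where $\ker L\neq 0$ ``the normal-form conditions built into \eqref{e:defineNf} are what consume this freedom.'' That is unsupported, and it is not how the argument works: the conditions in \eqref{e:defineNf} constrain $\Phi$, not the transformation, and they leave the solution undetermined exactly modulo the finite-dimensional space $\ker\mathcal{L}$ of positive-weight infinitesimal automorphisms of the model quadric---the same isotropy freedom as in Chern--Moser, where normalizing maps tangent to the identity form a family parametrized by the isotropy data. The paper removes this ambiguity by an additional normalization of the transformation itself, namely $(f_0,\real g_0,f_1)\in\image\mathcal{L}^*=(\ker\mathcal{L})^{\perp}$; without some such condition on $H$, the map in your induction is simply not unique, and your argument supplies no mechanism that would make it so.
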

The solution of the {\em analytic} normal 
form problem, however, runs into all of the 
difficulties described above. However, there is a 
partial, ``weak'' normalization problem, 
described by a normal form space 
${\hat{ \mathcal{N}}}_f^w \supset {\hat{ \mathcal{N}}}_f$  (again 
defined below in \eqref{e:defineNf}), which 
in practice does not try to normalize the $(3,2)$ and
the $(2,3)$-terms and therefore treats the 
transversal $d$-manifold $z = f_0 (w)$ as 
a parameter. This fact is somewhat of independent 
interest, so we state it as a theorem: 
\begin{theorem}\label{thm:main2}
Fix a nondegenerate form of full rank $Q(z,\bar z)$ on $\Cn$ with 
values in $\Cd$, i.e. a map of the form $Q(z,\bar z) = (\bar z^t J_1 z, \dots ,\bar z^t J_d z)$ with the $J_k$ satisfying \eqref{e:nondegenerate}. Then for
the subspace $\mathcal{N}^w = \hat{\mathcal{N}}^w_f \cap \cps{z,\bar z , \real w}$ defined below in 
\eqref{e:defineNf}
the following holds. 
Let $M$ be given near $0\in \CN$ by an equation of the form 
\[\imag w' = Q(z',\bar z',  ) + \tilde \Phi_{\geq 3} (z', \bar z' ,\real w').\]
Then for any $f_0 \in (w) \cps{w}$  there exists a unique biholomorphic map of the form 
$H(z,w)= (z + f_0 + f_{\geq 2} , w + g_{\geq 3})$ with 
$f_{\geq 2} (0,w) = 0$ such that in the new coordinates
$(z,w)=H^{-1} (z',w')$ the manifold $M$ is given by an equation of the 
form 
\[ \imag w= Q(z,\bar z) + \Phi_{\geq 3} (z,\bar z, \real w) \]
with $\Phi_{\geq 3} \in \mathcal{N}^w$. 
\end{theorem}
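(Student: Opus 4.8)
The plan is to construct the biholomorphism $H$ recursively by quasihomogeneous weight, reducing the problem at each step to a linear ``homological'' equation. Writing $H = \id + (f_0,0) + (f_{\ge 2}, g_{\ge 3})$ and substituting $(z',w') = H(z,w)$ into the defining equation of $M$, one rewrites the requirement that $M$ take the form $\imag w = Q(z,\bar z) + \Phi_{\ge 3}(z,\bar z,\real w)$ with $\Phi_{\ge 3} \in \mathcal{N}^w$. Collecting the terms of weight $N$ (with $z,\bar z$ of weight $1$ and $w,\bar w$ of weight $2$) yields, after linearizing at the hyperquadric $\imag w = Q$, an equation of the shape
\[ L(f_N, g_N) + \Phi_N = R_N, \]
where $L$ is the Beloshapka linearization, $\Phi_N \in \mathcal{N}^w$ is the unknown normal-form part, and $R_N$ is an explicit polynomial expression in $\tilde\Phi$ and in the components $(f_j,g_j)$ of weight $j < N$ already determined. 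Thus everything rests on solving this single linear equation at each weight.

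First I would establish solvability and uniqueness of the homological equation. By construction of the weak normal form space in \eqref{e:defineNf}, $\mathcal{N}^w$ is a complement to the image of $L$ on the components that we do normalize; crucially, the $(3,2)$- and $(2,3)$-components---precisely the ones where $L$ has a large, badly behaved cokernel---are left untouched and instead parametrized by the prescribed transversal $z = f_0(w)$. The normalization $f_{\ge 2}(0,w)=0$ together with the choice of $f_0$ then fixes the residual kernel directions of $L$, so that $(f_N,g_N,\Phi_N)$ is uniquely determined at every weight. This already yields the existence and uniqueness of the formal transformation, in the same manner as \rt{thm:main1} but for the larger space $\hat{\mathcal{N}}^w_f$ and with the extra transversal parameter $f_0$.

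The main obstacle is convergence: a priori the recursion produces only a formal power series, and the operator $L$ built from the trace operator $T = (\partial/\partial\bar z)^t J (\partial/\partial z)$ has large kernel and cokernel, so the classical small-divisor estimates are unavailable. Here I would invoke the central new tool, the ``big denominator theorem'' of \cite{Stolovitch:2014tk}: even in this degenerate regime it provides a bound of the form $\|(f_N,g_N)\| \le C\,N^{\sigma}\,\|R_N\|$ for the solution operator of the part of $L$ that we actually invert, with $C$ and $\sigma$ independent of $N$. The point of working with the weak space $\mathcal{N}^w$ is precisely that, having removed the obstructed $(3,2)$- and $(2,3)$-components, such a polynomial-in-weight estimate holds \emph{unconditionally}, that is, without the supplementary hypothesis \eqref{e:crucial} that is needed in order to pass to the full normal form.

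With the a priori estimate in hand, the last step is a majorant argument. Working in a scale of Banach spaces of germs holomorphic near $0$ equipped with weighted norms, I would feed the bound $\|(f_N,g_N)\| \le C N^\sigma \|R_N\|$ into the recursion defining $R_N$ and dominate the formal series by the solution of an associated majorant equation with positive radius of convergence. Summing over $N$ then shows that $f_{\ge 2}$, $g_{\ge 3}$, and $\Phi_{\ge 3}$ all converge in a fixed neighbourhood of the origin, so $H$ is a genuine biholomorphism and $\Phi_{\ge 3} \in \mathcal{N}^w = \hat{\mathcal{N}}^w_f \cap \cps{z,\bar z,\real w}$. Uniqueness of the convergent $H$ follows from the uniqueness already obtained at each weight.
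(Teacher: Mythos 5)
Your skeleton (formal weight-by-weight reduction, then an a priori estimate on the homological equation, then majorants) is plausible-sounding, but the convergence step has a genuine gap, and it sits exactly where the paper's actual work lies. The Big Denominator theorem of \cite{Stolovitch:2014tk} (Theorem~\ref{main-thm-BD} in the appendix) is not a black-box bound $\|(f_N,g_N)\|\leq C N^{\sigma}\|R_N\|$ for the inverse of a weight-graded linear operator; it is a solvability statement for a system $\mathcal S(F)=\pi\,\mathcal T(F)$ requiring two structural hypotheses: the linear part must be homogeneous with a right inverse obeying the denominator bound \re{BD}, and, crucially, the nonlinearity must be a \emph{regular} analytic differential map in the sense of Definition~\ref{def-nonlin-bd}, i.e.\ the coefficients multiplying the highest-order $u$-derivatives of the unknowns inside the nonlinear terms must vanish at $u=0$. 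This second hypothesis is the whole difficulty: your $R_N$ contains $u$-derivatives of lower-weight unknowns, each costing a factor comparable to the weight, and whether these losses are absorbed by the gain of the linear part is \emph{not} a question about the cokernel of $L$. Your diagnosis that the $(3,2)$/$(2,3)$ components are dangerous ``because $L$ has a large, badly behaved cokernel'' misses the point---the cokernel is already handled by the formal theory of \rt{thm:main1}; what makes the full problem singular is derivative bookkeeping: $\tilde F_{3,2}$ contains an $f_1''$, linking $\bar f_0'$ to $f_1''$, so that the $f_0'''$ in the $(3,3)$-equation acts like an $f_1'''$, exceeding the order $f_1''$ in the linear part. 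Your assertion that ``the polynomial-in-weight estimate holds unconditionally'' for the weak problem is precisely the claim that this bookkeeping closes once $f_0$ is a parameter rather than an unknown; you never verify it, and without that verification the final majorant step (which is, incidentally, redundant---the Big Denominator theorem \emph{is} the majorant argument) does not go through.

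For comparison, the paper never runs a single weight-graded recursion. It first uses the prescribed $f_0$ to pass to normal coordinates by an explicit implicit-function-theorem construction, killing all $(p,0)$ and $(0,p)$ terms; it then isolates a \emph{finite} nonlinear PDE system in $u$ for $(f_1,\real g_0)$ alone, by solving the $(2,1)$- and $(3,1)$-equations implicitly for $f_2,f_3$ and homogenizing the orders of differentiation with powers of $\Delta$ (equation \re{equ1b}); this system is graded by homogeneity in $u$ (not by quasihomogeneous weight), has big denominators of order ${\bf m}=(2,3)$ coming from inverting $D_u^2$ and $D_u^3$, and its nonlinearity is checked to be regular because, by construction, every highest-order derivative appears in the linear part with a coefficient nonvanishing at the origin. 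Only then is Theorem~\ref{main-thm-BD} invoked. Finally, the $(p,1)$-terms for $p\geq 4$ are normalized by a separate implicit function theorem resting on the bounded invertibility of $\mathcal K$ (\rl{lem:constantbound}). If you want to salvage your route, the missing piece you must supply is exactly the regularity verification for the weak system with $f_0$ as a parameter; that verification, not the formal reduction or the abstract estimate, is the content of the proof.
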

Let us note that (as is apparent 
from the construction of the convergent solution) the corresponding formal problem also has a solution. 

Geometrically speaking, 
the convergent normal form given here provides 
for a unique convergent ``framing'' of the complex tangent spaces along and 
parametrization for any  germ of a real manifold $N\subset M$ transverse to $T^c_0 M$, i.e.
a map $\gamma\colon \Rd \to M$ parametrizing
$N$ and for each $t\in\Rd$, a basis 
of $T^c_{\gamma(t)} M$.

The {\em analytic} choice of such a transverse manifold satisfying the additional restrictions
to be in $\hat{\mathcal{N}}_f$
 is actually  quite more involved than the choice
 of a transverse curve in 
the case of a hypersurface, as the ``resonant terms'' already alluded to above provide for 
an intricate coupling of the PDEs which we 
will derive in their nonlinear terms. It is with 
that in mind that one has to put some 
additional constraint in order to provide for a 
complete normalization. We note, however, that 
we obtain a complete solution to the
formal normalization problem.

As already stated, in this generality we cannot guarantee convergence of 
the normal form. However, there are some 
{\em purely algebraic} conditions describing a subset of formal normal forms, for which 
the transformation to the normal form (and therefore also the normal form) can 
be shown to be convergent if the data is. 

\begin{theorem}\label{thm:main3}
Fix a nondegenerate form of full rank $Q(z,\bar z)$ on $\Cn$ with 
values in $\Cd$, i.e. a map of the form $Q(z,\bar z) = (\bar z^t J_1 z, \dots ,\bar z^t J_d z)$ with the $J_k$ satisfying \eqref{e:nondegenerate}. 
Let $M$ be given near $0\in \CN$ by an equation of the form 
\[\imag w' = Q(z',\bar z',  ) + \tilde \Phi_{\geq 3} (z', \bar z' ,\real w'), \]
with $\tilde \Phi\in \cps{z,\bar z , \real w} $.
Then any  formal biholomorphic map into the normal form 
from \cref{thm:main1} is convergent 
if the (formal) normal form
\[ \imag w= Q(z,\bar z) + \Phi_{\geq 3} (z,\bar z, \real w) \]
satisfies
\begin{equation}
\label{e:crucial1} \Phi_{1,1}' \Phi_{1,2} - \Phi_{1,2}'(Q + \Phi_{1,1})= 0.
\end{equation}
\end{theorem}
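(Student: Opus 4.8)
The plan is to reduce the convergence of the full normalization of \cref{thm:main1} to the convergence of a single object, the transverse $d$-manifold $z = f_0(w)$, and then to use \eqref{e:crucial1} to make the equation determining $f_0$ solvable with good estimates. First I would invoke \cref{thm:main2}: for \emph{any} convergent choice of transverse manifold $f_0 \in (w)\cps{w}$ we already have a convergent biholomorphism carrying $M$ into the weak normal form $\mathcal{N}^w = \hat{\mathcal{N}}^w_f \cap \cps{z,\bar z,\real w}$, which does not attempt to normalize the $(2,3)$ and $(3,2)$ terms. Since $\hat{\mathcal{N}}_f \subset \hat{\mathcal{N}}^w_f$ differs from the weak space exactly by the additional constraints on these terms, the only remaining step to reach the strong normal form of \cref{thm:main1} is to pin down $f_0$ so that the $(2,3)$ and $(3,2)$ parts are normalized. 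Thus the entire convergence question concentrates on showing that this distinguished $f_0$, which \cref{thm:main1} produces formally, is in fact convergent.

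Second, I would analyze the equation for $f_0$. Writing $u = \real w$ and sorting by quasihomogeneous order $m$, the normalization of the $(2,3)$ terms is a homological equation of the schematic shape
\[ \mathcal{L}\,(f_m,g_m) + \Phi_m = R_m, \]
where $\mathcal{L}$ is the linear operator attached to the model hyperquadric $\imag w = Q(z,\bar z)$, $\Phi_m$ is the normal-form part, and $R_m$ collects the nonlinear feedback of the already-constructed lower-order jets together with the given data. The key structural point, coming from \cite{Stolovitch:2014tk}, is that on the complement of the normal-form space the operator $\mathcal{L}$ is not merely invertible but \emph{gains with the order}: the relevant action of the trace operator $T$ on type $(2,3)$ terms forces the pertinent singular values to grow with $m$, so that the right inverse is bounded by the reciprocal of a quantity increasing in $m$. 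This is the ``big denominator'' mechanism, and in isolation it would immediately yield geometric decay of the components of $f_0$; the only obstruction is the nonlinear coupling, which enters $R_m$ through the resonant terms $\Phi_{1,1}$ and $\Phi_{1,2}$ and couples the orders in the genuinely PDE fashion described in the introduction.

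Third — and this is where \eqref{e:crucial1} is decisive — I would identify the quadratic coupling that prevents the big-denominator estimate from closing with the type $(2,3)$, $\Cd$-valued expression $\Phi_{1,1}' \Phi_{1,2} - \Phi_{1,2}'(Q + \Phi_{1,1})$, the primes denoting $\partial/\partial(\real w)$. Under \eqref{e:crucial1} this expression vanishes identically, so the feedback governing the $(2,3)$-normalization becomes affine in the unknown $f_0$: the obstruction $R_m$ is then estimated purely from the inductive hypothesis, and $f_0$ is obtained by applying the big-denominator right inverse to a controlled source. A majorant/fixed-point argument along the scale of weighted norms on $\cps{z,\bar z,\real w}$ then produces a common radius of convergence for all homogeneous components of $f_0$; composing the resulting convergent transverse-manifold correction with the convergent weak normalization from \cref{thm:main2} gives a convergent biholomorphism into $\hat{\mathcal{N}}_f$, and uniqueness is inherited from \cref{thm:main1}.

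The main obstacle I anticipate is the verification underlying the third step: one must trace precisely how differentiating the normal-form constraints in the transverse ($\real w$) directions feeds the resonant $(1,1)$ and $(1,2)$ terms back into the homological equations at type $(2,3)$, and check that \eqref{e:crucial1} is exactly the algebraic identity annihilating the associated operator-valued coefficient, so that the coupled system degenerates to a triangular one. Coupled with this is the need to establish the big-denominator lower bound for $\mathcal{L}$ uniformly in the transverse variable $u$ and in the vector-valued Levi datum $Q$; without the cancellation furnished by \eqref{e:crucial1} the system genuinely couples across orders and the estimate cannot be iterated.
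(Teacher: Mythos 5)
Your high-level skeleton (concentrate the convergence question on the transverse data $f_0$, use big denominators, invoke \eqref{e:crucial1} to close the estimates) points in the right direction, but there are two genuine gaps. First, the reduction in your opening step is not free: \cref{thm:main2} produces, for each convergent parameter $f_0$, a convergent weak normalization, but to ``pin down $f_0$'' you must solve an equation whose right-hand side is the $(2,3)/(3,2)$-part of the \emph{output} of that weak normalization, i.e.\ of the solution of a nonlinear PDE system itself obtained from the Big Denominator theorem. To run your fixed-point/majorant argument on this outer equation you would need quantitative analytic dependence of the inner solve on the parameter $f_0$ (uniform radii, Lipschitz-type bounds in the weighted norms), and neither your sketch nor the cited machinery provides this. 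The paper avoids the issue entirely: in \cref{sec:preparing_the_conjugacy_equation} the unknowns $(f_0, f_1, \real g_0)$ are kept together, $f_2$ and $f_3$ are eliminated through the implicit equations \eqref{e:implicit2}, and Theorem \ref{main-thm-BD} is applied \emph{once} to the single coupled system \eqref{e:complete5b}; there is no two-level iteration with \cref{thm:main2} as an inner black box.

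Second, you misidentify the mechanism by which \eqref{e:crucial1} enters. It is not that a quadratic coupling disappears so that the feedback becomes ``affine in $f_0$'' --- the nonlinear terms $\mathcal{T}_{1,1}, \mathcal{S}_{2,2}, \overline{\mathcal{S}}_{3,2}, \dots$ remain fully nonlinear in $(f_0, f_1, \psi)$ even under \eqref{e:crucial1}. The actual obstruction is derivative bookkeeping: in the raw conjugacy system the equation for the $z^3\chi^2$-term carries $f_1''$ (effectively $f_1'''$ after homogenizing the differentiation orders) in its \emph{nonlinear} part, while the linear part of the $z^3\chi^3$-equation only controls $f_1''$; such a system is singular, and the regularity hypothesis of Definition \ref{def-nonlin-bd} (condition \eqref{p-reg}) required by the Big Denominator theorem fails. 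This is why the paper switches to the complex defining equation $w = \tau + 2iQ(z,\chi) + S(z,\chi,\tau)$: the reality relations \eqref{eqn:realrelS1} combined with \eqref{e:crucial1} yield $S_{3,2} = -\bar S_{2,3}$ (and likewise $S_{2,1} = -\bar S_{1,2}$, $S_{3,1} = -\bar S_{1,3}$), which permits replacing each offending equation by the conjugate of its mirror equation, whose nonlinear part involves only admissible derivative orders. That conjugation trick --- not a cancellation of quadratic terms --- is what restores regularity and lets the big-denominator estimates close; working on an equation for $f_0$ in isolation, with real defining data, you have no substitute for it.
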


It is a natural question to ask how our normal form 
relates to the Chern-Moser normal form. In fact, our normalization procedure in 
\autoref{thm:main3} is a bit different from the Chern-Moser procedure. 
Let us emphasize that in the  hypersurface case ($d=1$) the normal form 
in \cref{thm:main1}, even though necessarily different from the Chern-Moser normal form, 
is automatically convergent. Indeed, in this case, 
\eqref{e:crucial1}  on the formal normal form is {\em automatically satisfied} since 
$\Phi_{1,1} = \Phi_{1,2} = 0$. 

The construction of our normal form is different than the Chern-Moser construction, 
since it is geared towards higher codimensional manifolds. However, we 
can adapt it in such a way that in codimension one, we obtain a completely 
new proof of the convergence of the Chern-Moser normal form, which relies completely 
on the inductive procedure used to construct it. We shall discuss this in detail
in \autoref{sec:chernmoser}.

\section{Framework}
We first gather some notational and technical preliminaries, which 
are going to be used in the sequel without further mentioning. 
\subsection{Initial  quadric}
Let $\tilde M$ be a germ of a real analytic manifold at the origin 
of $\Bbb C^{n+d}$ defined by an equation of the form
\begin{equation}\label{start-eq}
v'=Q(z',\bar z')+ \tilde \Phi_{\geq 3}(z',\bar z', u')
\end{equation}
where $w':=u'+iv'\in  \Bbb C^d$, $u' = \real w' \in \Rd$, $v' = \imag w' \in\Bbb R^d$ and $z'\in \Bbb C^n$. Here, $Q(z',\bar z')$ is a quadratic polynomial map with 
values in $\Bbb R^d$ and $\tilde \Phi_{\geq 3}(z',\bar z', u')$ an analytic map germ at $0$.
We endow  the variables $z',\bar z', w'$ with weights: $z'$ and 
$\bar z'$ get endowed with weights $p_1=p_2=1$ and $w'$ (and 
also $u$ and $v$) with $p_3=2$ respectively. Hence, the defining equation of the 
{\em model quadric} $\imag w = Q(z,\bar z)$ is quasihomogeneous (q-h) of quasi-degree (q-d) $2$.
We assume that the higher order deformation 
$\tilde \Phi_{\geq 3} (z,\bar z, u)$ has quasi-order (q-o) $\geq 3$, that is 
$$
 \tilde \Phi_{\geq 3}(z',\bar z', u')=\sum_{p\geq 3} \tilde\Phi_p(z',\bar z', u'),
$$
with $\tilde \Phi_p(z',\bar z', u')$ q-h of degree $p$. 
Hence, $\tilde M$ is a higher order perturbation of the quadric defined
by the homogeneous equation  $v' = Q(z', \bar z')$.
We assume that the quadratic polynomial $Q$ is a Hermitian form on $\Cn$, valued in $\Rd$, meaning it is  of the form
\[ Q(z,\bar z) = \begin{pmatrix} Q_1({z},{\bar z}) \\ \vdots \\ Q_d({z},{\bar z})
\end{pmatrix}, \]
where each $Q_k({z},{\bar z}) = {\bar z}^t J_k z$ is a Hermitian form 
on $\Bbb C^n$ defined by a Hermitian $n\times n$-matrix $J_k$. In particular, we observe that  $\overline{Q(a,\bar b)}=Q(b, \bar a)$, for any $a,b\in \Bbb C^n$.

We assume that $Q(z,\bar z)$ is {\em nondegenerate},  if $Q(v,e) = 0$ for all $v\in\Cn$ implies $e = 0$, or equivalently, 
\[ \bigcap_{k=1}^d \ker J_k = \{ 0 \}. \]
We also assume that the forms $J_k$ are \emph{linearly independent,} which translates to the fact that if $\sum_k \lambda_k J_k =0$ for 
scalars $\lambda_k$, then $\lambda_k = 0$, $k=1,\dots ,d$.  

In terms of the usual nondegeneracy conditions of CR geometry (see e.g. \cite{Baouendi:1999uy}) 
these conditions can be stated equivalently by requiring that  the
model quadric  $v = Q(z,\bar z)$ is $1$-nondegenerate and of finite type at the 
origin. 

\subsection{Complex defining equations} \label{ss:complexdef}
We will also have use for the complex defining equations for the real-analytic
(or formal) manifold $M$. If $M$ is given by 
\[ \imag w = \varphi(z, \bar z, \real w), \]
where $\varphi$ is at least quadratic, an application of the implicit function theorem (solving for $w$)
shows that one can give an equivalent equation 
\[ w = \theta (z, \bar z, \bar w).   \]
Such an equation comes from the defining equation of a real hypersurface if and only  if 
$\theta (z, \bar z , \bar \theta (\bar z, z, w)) = w$. 

We say that the coordinates $(z,w)$ are normal if $\varphi(z,0,u) = \varphi(0,\bar z, u) =0$, or 
equivalently, if $\theta (z, 0, \bar w) = \theta (0, \bar z, \bar w)$. The following 
fact is useful: 
\begin{lemma}
\label{lem:normal} Let $\varrho (z, \bar z, w, \bar w)$ be a defining function for a germ 
of a  real-analytic submanifold
$M\subset \C^n_z \times \C^d_w$. Then $(z,w)$ are normal coordinates for $M$ if and only 
if $\varrho (z, 0, w, w) = \varrho (0,\bar z, w, w) = 0$.
\end{lemma}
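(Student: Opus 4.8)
The plan is to reduce everything to the graph defining function and to use the standard fact that any two local real-analytic defining functions for a submanifold of codimension $d$ differ by multiplication by an invertible matrix of real-analytic germs. First I would fix the canonical defining function coming from the graph presentation $\imag w = \varphi(z,\bar z, \real w)$, namely
\[ \varrho_0(z, \bar z, w, \bar w) = \imag w - \varphi\left(z, \bar z, \real w\right) = \frac{w - \bar w}{2i} - \varphi\left(z, \bar z, \frac{w + \bar w}{2}\right), \]
which is $\Rd$-valued and has linearly independent differentials along $M$. Throughout I would pass to the complexification, treating $\bar z$ and $\bar w$ as independent holomorphic variables, so that the substitutions appearing in the statement (setting $\bar z = 0$, $\bar w = w$, respectively $z = 0$, $\bar w = w$) become legitimate evaluations of a holomorphic germ.

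The computational heart is a direct substitution into $\varrho_0$. Setting $\bar w = w$ forces $\imag w$ to vanish and $\real w$ to equal $w$, so that
\[ \varrho_0(z, 0, w, w) = -\varphi(z, 0, w), \qquad \varrho_0(0, \bar z, w, w) = -\varphi(0, \bar z, w). \]
Hence, for the canonical defining function, the two restrictions vanish identically if and only if $\varphi(z, 0, u) = \varphi(0, \bar z, u) = 0$ (identifying the frozen variable $w$ with the real variable $u = \real w$, which is exactly what $\bar w = w$ produces), and this is precisely the definition of $(z,w)$ being normal. This settles the lemma in the case $\varrho = \varrho_0$.

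To pass to an arbitrary defining function $\varrho$, I would invoke the standard fact (see e.g. \cite{Baouendi:1999uy}) that $\varrho = a \cdot \varrho_0$ for some $d\times d$ matrix $a$ of real-analytic germs which is invertible near the origin; this complexifies to a relation between holomorphic germs. Evaluating on each slice gives $\varrho(z, 0, w, w) = a(z, 0, w, w)\, \varrho_0(z, 0, w, w)$ and the analogous identity on the other slice. Since $a(0) \in \glonec{d}$ and all the substitutions fix the origin, $a$ remains invertible after evaluation, so the restriction of $\varrho$ vanishes if and only if the corresponding restriction of $\varrho_0$ does; the equivalence of the preceding paragraph then transfers verbatim.

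The main obstacle I anticipate is bookkeeping rather than conceptual: one must check that the invertible-factor relation between defining functions holds at the level of holomorphic germs after complexification, and that evaluating $a$ on the slices $\{\bar z = 0,\ \bar w = w\}$ and $\{z = 0,\ \bar w = w\}$ preserves invertibility. Both points are immediate once one records that invertibility at the origin is an open condition and that these substitutions fix the origin, so no genuine analytic difficulty arises.
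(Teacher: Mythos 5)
Your proof is correct. Note that the paper does not supply its own argument for this lemma---it simply refers the reader to \cite{Baouendi:1999uy}---and your route (reduce to the canonical graph defining function $\varrho_0 = \imag w - \varphi(z,\bar z,\real w)$, compute the slices $\bar w = w$ directly, then transfer to an arbitrary $\varrho$ via the relation $\varrho = a\,\varrho_0$ with $a$ an invertible matrix of real-analytic germs) is exactly the standard argument in that reference. Your write-up also correctly flags the two points that genuinely need checking: that $\varrho = a\,\varrho_0$ persists under complexification as an identity of holomorphic germs, and that $a$ restricted to the slices $\{\bar z = 0, \bar w = w\}$ and $\{z = 0, \bar w = w\}$ stays invertible because invertibility at the origin is open and the substitutions fix the origin.
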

For a proof, we refer to \cite{Baouendi:1999uy}. 

\subsection{Fischer inner product}

Let $V$ be a finite dimensional vector space (over $\C{}$ or $\R{}$), endowed with
an inner product $\inp{\cdot}{\cdot}$. We denote by $u=(u_1,\dots, u_d)$ a 
(formal) variable, and write $\fpstwo{V}{u}$ for the space of formal power series
in $u$ with values in $V$. A typical element $f\in\fpstwo{V}{u}$ will be written 
as
\begin{equation}
  \label{e:fdecompose} f(u) = \sum_{\alpha\in\N^{d}} f_{\alpha} u^{\alpha}, \quad f_\alpha \in V.
\end{equation}
We define an extension of this inner product to $\fpstwo{V}{u} $ by 
\begin{equation}
  \label{e:fischer} \inp{f_\alpha u^\alpha}{g_\beta u^\beta} =
  \begin{cases}
   \alpha! \inp{f_\alpha}{g_{\alpha}} & \alpha = \beta \\
   0 & \alpha\neq \beta.
   \end{cases} 
\end{equation}
The inner product $\inp{f}{g}$ is not defined on all of $\fpstwo{V}{u}$, but is only  defined whenever at most finitely many of the 
products $f_{\alpha} g_{\alpha} $ are nonzero. In particular, $\inp{f}{g}$ is 
defined whenever $g\in F[u]$. This inner product is called the Fischer inner product \cite{fischer, Belitskii-formal}. If $T\colon \fps{F_1}{u} \to \fps{F_2}{u}$
is a linear map, we say that $T$ has a formal adjoint if there exists a 
map $T^* \colon \colon \fps{F_2}{u} \to \fps{F_1}{u} $ such that 
\[ \inp{Tf}{g}_2 = \inp{f}{T^* g}_1 \]
whenever both sides are defined. 

\begin{lemma}
\label{lem:existenceformaladjoint} A linear map $T$ as above has a formal 
adjoint if $T({F_1}[u]) \subset F_2 [u]$.
\end{lemma}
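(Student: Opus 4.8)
The plan is to construct $T^*$ by prescribing its coefficients on monomials and extending coefficientwise, with the finite-dimensional structure of $F_1,F_2$ providing genuine adjoints and the polynomiality hypothesis providing the finiteness needed to stay inside $\fpstwo{F_1}{u}$. First I would record, for each pair of multi-indices $\alpha,\beta\in\N^d$, the linear map $L_{\alpha\beta}\colon F_1\to F_2$ sending $c$ to the $u^\beta$-coefficient of $T(c\,u^\alpha)$. Since $F_1$ and $F_2$ are finite-dimensional inner product spaces, each $L_{\alpha\beta}$ has a genuine adjoint $L_{\alpha\beta}^*\colon F_2\to F_1$, characterized by $\inp{L_{\alpha\beta}c}{e}_2=\inp{c}{L_{\alpha\beta}^*e}_1$ for all $c\in F_1,\ e\in F_2$.

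The definition forced on $T^*$ is read off by testing the desired identity against monomials: requiring $\inp{T(c\,u^\alpha)}{g}_2=\inp{c\,u^\alpha}{T^*g}_1$ for every $c\in F_1$ and expanding both Fischer products (using \eqref{e:fischer}) gives the only possible choice $(T^*g)_\alpha=\tfrac{1}{\alpha!}\sum_\beta \beta!\,L_{\alpha\beta}^* g_\beta$, after which I set $T^*g:=\sum_\alpha (T^*g)_\alpha u^\alpha$. The crucial point is that this is a well-defined element of $\fpstwo{F_1}{u}$, and it is exactly here that the hypothesis $T(F_1[u])\subset F_2[u]$ enters: for fixed $\alpha$, the image $T(c\,u^\alpha)$ is a polynomial for every $c$, so running $c$ over a basis of the finite-dimensional space $F_1$ shows that $L_{\alpha\beta}=0$ for all but finitely many $\beta$; hence each coefficient $(T^*g)_\alpha$ is a finite sum and lies in $F_1$. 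Linearity of $g\mapsto T^*g$ is then immediate.

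To finish I would verify the adjoint relation $\inp{Tf}{g}_2=\inp{f}{T^*g}_1$ wherever both sides are defined, starting on monomials $f=c\,u^\alpha$. There the left-hand side is automatically defined, again because $T(c\,u^\alpha)$ is a polynomial, and expanding both sides reduces the claim to $\sum_\beta\beta!\inp{L_{\alpha\beta}c}{g_\beta}_2=\sum_\beta\beta!\inp{c}{L_{\alpha\beta}^*g_\beta}_1$, which holds term by term by the defining property of $L_{\alpha\beta}^*$, the sums being finite by the support statement above. Bilinearity then propagates the identity to all polynomial $f$, where both sides are manifestly defined, and thence to general arguments in the range where both Fischer pairings are genuinely finite sums, the relevant rearrangements being legitimate precisely in that regime.

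The main obstacle is the well-definedness of $T^*g$ as a formal power series, that is, the finiteness of each coefficient $(T^*g)_\alpha$: without the hypothesis the inner sum over $\beta$ could have infinitely many nonzero terms and $T^*$ would fail to map into $\fpstwo{F_1}{u}$. Once the finite-dimensional adjoints $L_{\alpha\beta}^*$ and the support bound are in hand, the remainder is bookkeeping with \eqref{e:fischer}; the only other point that deserves care is confirming that both sides of the adjoint identity are \emph{simultaneously} defined rather than merely formally matching, which the polynomiality of $T$ on the spanning set of monomials again supplies.
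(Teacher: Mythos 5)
Your proof is correct and takes essentially the same route as the paper's: your coefficient maps $L_{\alpha\beta}^*$ applied to $g_\beta$ are exactly the paper's Riesz-representation vectors $s_\alpha^\beta$ (up to the factor $\beta!/\alpha!$), and in both arguments the hypothesis $T(F_1[u])\subset F_2[u]$ enters only to guarantee that each coefficient of $T^*g$ is a finite sum. The differences are cosmetic: you package the construction via finite-dimensional adjoints, and you spell out the basis argument for finiteness and the final verification on monomials/polynomials, which the paper's proof leaves implicit.
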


\begin{proof}
Let $T(f_\alpha u^\alpha) =: g^\alpha = \sum_\beta g^\alpha_\beta u^\beta$, and set $T^*( h_\beta u^{\beta}) = s^\beta (u) = \sum_\alpha s^{\beta}_{\alpha}{u^{\alpha}} $. 
We need that 
\[ \begin{aligned}
\inp{T(f_\alpha u^\alpha)}{h_\beta u^\beta}_2 &= \beta! \inp{g^\alpha_{\beta}}{h_\beta}_2 \\ 
& = \inp{f}{T^* ( h_\beta u^\beta)}_1 \\
& = \alpha! \inp{f_{\alpha}}{s^{\beta}_{\alpha}}_1,
 \end{aligned} \]
 which has to hold for all $\alpha,\beta$, and arbitrary $f_\alpha\in F_1, h_\beta \in F_2$. This condition determines $s_{\alpha}^{\beta} $ uniquely: 
 Fix $h_\beta$ and 
 consider the linear form 
 $ F_1\ni f_\alpha \mapsto \inp{T f_\alpha u^\alpha}{ h_\beta }. $ 
 Since $\inp{\cdot}{\cdot}_1$ is nondegenerate, there exists a uniquely 
 determined $s_\alpha^\beta \in F_1$ such that
  $\inp{g^\alpha_\beta}{h_\beta}_2 = \frac{\alpha!}{\beta!} \inp{f_\alpha}{s_\alpha^\beta}_1 $.

  We now only need to ensure that the series $T^* h$ is well-defined for 
  $h = \sum_\beta h_\beta u^\beta$. It would be given by 
  \[ T^*h = \sum_\alpha \left(\sum_\beta s^\beta_\alpha \right) u^\alpha, \]
 which  is a 
 well-defined expression under the condition that $T(f_\alpha u^\alpha)$ is a polynomial.
\end{proof}

We are now quickly going to review some of the facts 
and constructions which we are going to need. 

The map $D_\alpha\colon \fpstwo{F}{u} \to \fpstwo{F}{u}$, 
\[ D_\gamma f(u) = \vardop{f}{u}{\gamma} = \sum_\beta \binom{\alpha!}{\gamma!}{\gamma!} f_\alpha u^{\alpha-\gamma}  \]
has the formal adjoint 
\[ M_\gamma g(u) = u^\gamma g(u).\]
Indeed, 
\[ \inp{D_\gamma f_\alpha u^\alpha}{g_\beta u^\beta} = 
\begin{cases}
 \binom{\alpha}{\gamma} \gamma! (\alpha - \gamma)! \inp{f_\alpha}{g_{\alpha-\gamma}} = \inp{f_\alpha u^\alpha}{g_\beta u^{\beta+\gamma}} & \beta =\alpha - \gamma \\ 0 & \beta \neq \alpha - \gamma
 \end{cases}. \]

If $L\colon F_1 \to F_2$ is a linear operator, then the induced operator 
$T_L \colon \fpstwo{F_1}{u} \to \fpstwo{F_2}{u} $ defined by 
\[ T_L \left( \sum_\alpha f_\alpha u^\alpha \right) = \sum_\alpha Lf_\alpha u^\alpha \]
has the formal adjoint $T_L^* = T_{L^*}$, since
\[ \inp{T_L f_\alpha u^\alpha}{g_\beta u^\beta}_2 = 
\begin{cases}
 \alpha! \inp{Lf_{\alpha}}{g_\beta}_2 = \alpha!\inp{f_\alpha}{L^* g_\beta}_1 = 
 \inp{f_{\alpha} u^{\alpha}}{T_{L^*} g_\beta u^\beta} & \alpha = \beta \\ 0 &\text{else}.
 \end{cases} \]

Let $L_j \colon \fpstwo{F}{u} \to \fpstwo{F_j}{u} $ be linear operators, $j=1,\dots,n$, each of which possesses a 
formal adjoint $L_j^*$. Then the operator
\[ L= (L_1,\dots,L_n) \colon \fpstwo{F}{u} \to \fpstwo{ \oplus_j F_j }{u}, \]
where $\oplus_j F_j$ is considered as an orthogonal sum, 
has the formal adjoint $L^* = \sum_j L_j^* $.

More generally, it is often convenient to gather 
all derivatives together: 
consider the map $D_k \colon \fpstwo{F}{u} \to \fpstwo{\Sym^k F}{u}$, where
$\Sym^k F$ is the space of symmetric $k$-tensors on $\C^{d}$ (respectively 
$\R^{d}$) with values in $F$, defined 
by 
\[ D_k f(u) = \left( D_\alpha f (u) \right )_{\substack{\alpha\in\N^{d}\\|\alpha| = k}}\]
has the formal adjoint $D_k^* = M_k$ given by 
\[ M_k g(u) = \sum_{\substack{\gamma\in \N^{d} \\ |\gamma| = k }} 
g_\gamma(u) u^\gamma. \]
Here we realize the space $\Sym^k F$ as the space of homogeneous polynomials of 
degree $k$ in $d$ variables $(u_1,\dots,u_d)$, i.e. 
\[ \Sym^k F = \bigoplus_{j=1}^{\binom{k+d-1}{d-1} } F, \]
with the induced norm as an orthogonal sum (which is the usual induced norm on that space). 
\

If $L_1 \colon \fpstwo{F}{u}\to\fpstwo{F_1}{u} $ and $L_2 \colon \fpstwo{F_1}{u}\to\fpstwo{F_2}{u}$ are linear maps each of which possesses a formal adjoint, 
then $L = L_2 \circ L_1$ has the formal adjoint $L^* = L_1^* \circ L_2^*$.

It is often convenient to use 
the {\em normalized} Fischer product \cite{stolo-lombardi}, which is defined by 
\begin{equation}
  \label{e:fischernormalized} \inp{f_\alpha u^\alpha}{g_\beta u^\beta} =
  \begin{cases}
   \frac{\alpha!}{|\alpha|!} \inp{f_\alpha}{g_{\alpha}} & \alpha = \beta \\
   0 & \alpha\neq \beta.
   \end{cases} 
\end{equation}
While the adjoints with respect to 
the normalized and the standard Fischer inner product 
differ by 
constant factors for terms of the same homogeneity, the 
existence of adjoints and their kernels agree. Thus, it is 
not necessary to distinguish between the normalized and 
the standard Fischer product when looking at kernels of 
adjoints. The normalized version of the inner product is
far more suitable when dealing convergence issues and also better
for nonlinear problems \cite{stolo-lombardi}[proposition 3.6-3.7].

Our coefficient spaces $F_1$ and $F_2$ are often going to be spaces 
of polynomials (in $z$ and $\bar z$) of certain homogeneities, 
themselves equipped with the Fischer norm. 
Let $\mathcal{H}_{n,m}$ be the space of homogeneous polynomials of degree $m$ in $z\in\Cn$. We shall omit to write dependance on the dimension $n$ if the context permits.
Our definition of  the (normalized) Fischer inner product 
$\inp{\cdot}{\cdot}$, means that on monomials  
\begin{equation}
  \label{e:deffischer}  \inp{z^\alpha}{z^\beta} = \begin{cases}
    \frac{\alpha!}{|\alpha|!} & \alpha=\beta, \\
    0 & \alpha \neq \beta,
  \end{cases}
\end{equation}
and the inner product on $(\mathcal{H}_{n,m})^\ell$ is induced by declaring that the 
components are orthogonal with each other~:
if $f=(f^1,\dots,f^\ell)\in (\mathcal{H}_{n,m})^\ell$, 
then $\inp{f}{g} = \sum_{j=1}^\ell \inp{f^j}{g^j}$.

Let  $\mathcal{R}_{m,k}$ be the space of polynomials in $z$ and $\bar z$, valued in $\Cd$, which 
are homogeneous of degree $m$ (resp. $k$) in $z$ (resp. $\bar z$). 
Also this space will be equipped with the Fischer inner product $\inp{\cdot}{\cdot}_{d,k}$, where 
the components are declared to be orthogonal as well. That is, the inner product of a polynomial 
$P = (P_1,\dots,P_d)^t \in \mathcal{R}_{m,k}$
with a polynomial $Q = (Q_1,\dots,Q_d)^t \in \mathcal{R}_{m,k}$ is defined by 
$\inp{P}{Q} = \sum_\ell \inp{P_\ell}{Q_\ell}$, and  the latter inner products are given  on the basis monomials by
\begin{equation}
  \label{e:deffischer2}  \inp{z^{\alpha_1} \bar z^{\alpha_2}}{z^{\beta_1} \bar z^{\beta_2}} = \begin{cases}
    \frac{\alpha_1!\alpha_2!}{(|\alpha_1|+|\alpha_2|)!} & \alpha_1=\beta_1, \, \alpha_2=\beta_2 \\
    0 & \alpha_1 \neq \beta_1 \text{ or } \alpha_2\neq\beta_2.
  \end{cases}
\end{equation}


\subsection{The normalization conditions} 
\label{sec:the_normalization_conditions}
In this section, we shall discuss some of 
the operators which we are going to encounter
and discuss the normalization conditions 
used in \cref{thm:main1}, \cref{thm:main2}, and \cref{thm:main3}. The first normalization 
conditions on the $(p,0)$ and $(0,p)$ terms 
of a power series $\Phi (z,\bar z, u) \in \fps{z, \bar z ,u}$,
decomposed as 
\[ \Phi (z, \bar z, u) = \sum_{j,k = 0}^\infty \Phi_{j,k} (z,\bar z, u) ,  \]
is that 
\begin{equation}
\label{e:normalizep0} \Phi_{p,0} = \Phi_{0,p} = 0, \quad p\geq 0.
\end{equation}
With the potential to confuse the notions, we note 
that this corresponds to the requirement that $(z,w)$ are ``normal'' coordinates in the sense of Baouendi, Ebenfelt, and
Rothschild (see e.g. \cite{Baouendi:1999uy}) (it is also equivalent to  the requirement that $\Phi$ ``does not contain harmonic terms''). We write 
\begin{equation}
\label{e:defineN0} \mathcal{N}^0 := \{ \Phi \in \fps{z,\bar z, u} \colon \Phi(z,0,u) = \Phi (0,\bar z, u) = 0 \}.
\end{equation}

The 
first important operator, $\mathcal{K}$, is 
defined on formal power series in $z$ and $u$ (or $w$), and maps them to power series in $z, \bar z, u$, linear in $\bar z$,
by 
\[ \mathcal{K} \colon \fps{z,u}^d \to \faktor{\fps{z,\bar z , u}^d}{(\bar z^2)}, \quad  \mathcal{K} (\varphi(z,u)) = 
Q(\varphi(z,u),\bar z) = \begin{pmatrix}
  \bar z^t  J_1 (\varphi(z,u)) \\ \vdots \\ \bar z^t J_d (\varphi(z,u)) 
\end{pmatrix}.  \]
We can also consider $\bar{\mathcal K}$, defined by 
\[ \mathcal{\bar K} \colon \fps{\bar z,u}^d \to \faktor{\fps{z,\bar z , u}^d}{( z^2)}, \quad  \mathcal{K} (\varphi(\bar z,u)) = 
Q( z , \varphi(\bar z,u)) = \begin{pmatrix}
  (\varphi(\bar z,u))^t J_1 z \\ \vdots \\ (\varphi(\bar z,u))^t J_d z 
\end{pmatrix}.
\]
The important distinction for these operators
 to  the case $d=1$, is that for $d>1$, they
are not of full range. 
They are still injective, as we'll show later in \cref{lem:constantbound}. We will 
also construct a rather natural complementary 
space for their range, namely the kernels of 
\[ \mathcal{K}^* \colon \faktor{\fps{z,\bar z , u}^d}{(\bar z^2)} \to \fps{z,u}^d, \quad \mathcal{K}^* \begin{pmatrix}
  b_1 (z, \bar z, u) \\ \vdots \\ b_d (z, \bar z, u)
\end{pmatrix} = \sum_{j=1}^d \left(J_j \begin{pmatrix}
  \dop{\bar z_1}\big|_0 \\ \vdots \\ \dop{\bar z_n}\big|_0  
\end{pmatrix}\right)  b_j  \]
and of $(\bar{\mathcal K})^*$, respectively. These operators
are needed for the normalization of the $(p,1)$ and $(1,p)$
terms for $p > 1$ and constitute our first set of 
normalization conditions different from 
the Chern-Moser conditions: 
\begin{equation}
\label{e:normalizep1} \mathcal{K}^* \Phi_{p,1}  = \bar{\mathcal K}^* \Phi_{1,p} = 0, \quad p>1.
\end{equation}
We set the corresponding normal form space
\begin{equation}
\label{e:defineN1} \mathcal{N}^1_{\leq k} = \left\{ \Phi\in\fps{z,\bar z, u} \colon \mathcal{K}^* \Phi_{p,1} = \bar{\mathcal K}^* \Phi_{1,p} = 0 , 1< p \leq k\right\}. 
\end{equation}
For our other normalization conditions, in addition the operator $\mathcal K$, we 
shall need the operator $\Delta$, introduced by Beloshapka in  \cite{Beloshapka:1990ib}. 
It is defined for a power series map in $(z,\bar z, u)$ (valued in 
an arbitrary space) by 
\[ (\Delta \varphi)(z,\bar z, u)) = \sum_{j=1}^d \varphi_{u_j} (z,\bar z, u) Q_j (z,\bar z).\]
Its adjoint with respect to the Fischer inner product is going to 
play a prominent role: It is defined, again for an arbitrary power series map $\varphi$, by 
\[ \Delta^*  \varphi = \sum_{j=1}^d u_j Q_j \left(\dop{ z}, \dop{ \bar z} \right)\varphi. \]
The operator $\Delta^*$ is the equivalent to the trace operator which we are going to use.
The possible appearance of ``unremovable'' terms in $\Phi_{1,1}$ makes it a bit 
harder to formulate the corresponding trace conditions, as not only the obviously invariant $Q$ plays a role, but
rather all the invariant parts of  $\Phi_{j,j}$ for $j\leq 3$. Furthermore,
in the general setting, we do not have a ``polar decomposition'' for $\Phi_{1,1}$, making it 
hard to decide which terms to ``remove'' and which to ``keep'' when normalizing the 
diagonal tems. We opt for a balanced approach in our second set of normalization conditions, 
involving the diagonal terms $(1,1)$, $(2,2)$, and $(3,3)$: 
\begin{equation}
\label{e:normalizediagonal}
\begin{aligned}
-6 \Delta^* \Phi_{1,1} +(\Delta^*)^3 \Phi_{3,3} &= 0  \\
  \mathcal{K}^* ( \Phi_{1,1}  
- i  \Delta^* \Phi_{2,2} -   (\Delta^*)^2 \Phi_{3,3}) &= 0.
\end{aligned}
 \end{equation}
We define the set of power series $\Phi \in \fps{z,\bar z, u}$ satisfying 
these normalization conditions as $\mathcal{N}^d$ (``$d$'' stands for ``diagonal terms'').  Let us note that in the case $d=1$, these conditions are different from  the Chern-Moser conditions. 

The last set of normalization conditions deals with the $(2,3)$ and the $(3,2)$ terms; 
those possess terms which are not present in the Chern-Moser setting, but which simply disappear in the case $d=1$, reverting to the Chern-Moser conditions: 
\begin{equation}
\label{e:normalize23} \mathcal{K}^* (\Delta^*)^2 \left( \Phi_{2,3} + i \Delta \Phi_{1,2}  \right)= \mathcal{\bar K}^* (\Delta^*)^2 \left( \Phi_{3,2} - i \Delta \Phi_{2,1}  \right) = 0.
\end{equation}
The space of the power series which satisfy this condition will be denoted by 
\begin{equation}
\label{e:defineNo} \mathcal{N}^{\text{off}} = \left\{ \Phi\in\fps{z,\bar z, u} \colon \mathcal{K}^* (\Delta^*)^2 \left( \Phi_{2,3} + i \Delta \Phi_{1,2}  \right)= \mathcal{\bar K}^* (\Delta^*)^2 \left( \Phi_{3,2} - i \Delta \Phi_{2,1}  \right) = 0\right\}. 
\end{equation}
 This is the normal forms space of ``off-diagonal terms''. Let us note that in the 
 case $d = 1$, because 
 in our choice of normalization we have that  
 $\Phi_{1,1} \neq 0$ in general, even though our normalization 
 condition for the $(3,2)$ term reverts to  the same differential equation as 
 the differential equation for a chain, our full normal form will not necessarily produce chains. 
 We discuss this issue later in \autoref{sec:chernmoser}. 

We can now define the spaces 
$\hat{\mathcal{N}}_f \subset \hat{\mathcal{N}}_f^w $ of normal forms: 
\begin{equation}
\label{e:defineNf}
\begin{aligned}
\hat{\mathcal{N}}_f &:= \mathcal{N}^0 \cap \mathcal{N}^1_{\leq \infty} \cap \mathcal{N}^d \cap \mathcal{N}^{\text{off}}
\qquad \hat{\mathcal{N}}_f^w &:= \mathcal{N}^0 \cap \mathcal{N}^1_{\leq \infty} \cap \mathcal{N}^d
\end{aligned}
\end{equation}


\section{Transformation of a perturbation of the initial quadric}\label{sec:initialquadric}
We consider a formal holomorphic change of coordinates of the form
\begin{equation}\label{change}
z'=Cz+f_{\geq 2}(z,w)=:f(z,w),\quad w'=sw+g_{\geq 3}(z,w)=:g(z,w)
\end{equation}
where the invertible $n
\times n$ matrix $C$ and the invertible real $d
\times d$ matrix  $s$ satisfy
$$
Q(Cz,\bar C\bar z)=sQ(z,\bar z).
$$
In these new coordinates, equation \re{start-eq} reads
\begin{equation}\label{new-eq}
v=Q(z,\bar z)+\Phi_{\geq 3}(z,\bar z, u).
\end{equation}
This is the new equation of the manifold $M$ (in the 
coordinates $(z,w)$). We need to find the expression of $\Phi_{\geq 3}$. We  have
the following {\em conjugacy equation}:
\begin{eqnarray*}
sv+\imag(g_{\geq 3}(z,w)) &= & Q\left(Cz+f_{\geq 2}(z,w),\bar C\bar z+\bar f_{\geq 2}(\bar z,\bar w)\right) \\
&&+ \tilde \Phi_{\geq 3}\left(Cz+f_{\geq 2}(z,w),\bar C\bar z+\bar f_{\geq 2}(\bar z,\bar w), su +\real( g_{\geq 3}(z,w))\right).
\end{eqnarray*}
%
%
%
%
Let us set as notation $f:=f(z,u+iv)$ and $\bar f:=\bar f(\bar z, u-iv)$ with $v:= Q(z,\bar z)+ \Phi_{\geq 3}(z,\bar z, u)$. We shall write $Q$ for $Q(z,\bar z)$.
The conjugacy equation reads
\begin{equation}\label{conj}
\frac{1}{2i}\left(g-\bar g\right)  =  Q\left(f,\bar f\right)+\tilde \Phi_{\geq 3}\left(f,\bar f, \frac{g+\bar g}{2}\right).
\end{equation}

As above, we set $f_{\geq2}:= f_{\geq2}(z, u+iv)$ and $\bar f_{\geq2}:= \bar f_{\geq2}(\bar z, u-iv)$. We have 
\begin{equation}
\begin{aligned}
 \frac{1}{2i}(s(u+iv)-s(u-iv)) &= sQ(z,\bar z)+ s\Phi_{\geq 3}(z,\bar z, v)\\
Q\left(f,\bar f\right) & =  Q\left(Cz+ f_{\geq 2},\bar C\bar z +\bar f_{\geq 2}\right)\\
&= Q\left(Cz ,\bar f_{\geq 2}\right)+ Q\left(f_{\geq 2},\bar C\bar z \right) + Q\left(Cz,\bar C\bar z \right) +Q\left(f_{\geq 2},\bar f_{\geq 2}\right)\\
\tilde \Phi_{\geq 3}\left(f,\bar f, \frac{1}{2}\left[g+\bar g\right]\right)& = \tilde \Phi_{\geq 3}\left(Cz,\bar C\bar z, su\right)\\
&\quad + \left(\tilde \Phi_{\geq 3}\left(f,\bar f, \frac{1}{2}\left[g+\bar g\right]\right)- \tilde \Phi_{\geq 3}\left(Cz,\bar C\bar z, su\right)\right) 
 \end{aligned} \end{equation}


Therefore, we can rewrite \eqref{conj} in the following way:  
\begin{equation}
\begin{aligned}
  \frac{1}{2i}  [  g_{\geq 3}(z,u+iQ)&-\bar g_{\geq 3}(\bar z,u-iQ)] 
-\left(Q\left(Cz,\bar f_{\geq 2}(\bar z,u-iQ)\right)+ Q\left(f_{\geq 2}(z,u+iQ),\bar C\bar z \right)\right) \\ &=  Q\left(f_{\geq 2},\bar f_{\geq 2}\right) \\ 
& \quad +\tilde \Phi_{\geq 3}\left(Cz,\bar C\bar z, su\right)-s\Phi_{\geq 3}(z,\bar z, u)\nonumber\\
& \quad + \left(\tilde \Phi_{\geq 3}\left(f,\bar f, \frac{1}{2}(g+\bar g)\right)- \tilde \Phi_{\geq 3}\left(Cz,\bar C\bar z, su\right)\right) \nonumber\\
& \quad +\frac{1}{2i}\left ( g_{\geq 3}(z,u+iQ)-g_{\geq 3}\right)\nonumber -\frac{1}{2i}\left(\bar g_{\geq 3}(\bar z,u-iQ)-\bar g_{\geq 3}\right)\nonumber\\
& \quad +\left( Q\left(Cz,\bar f_{\geq 2}\right)-Q\left(Cz,\bar f_{\geq 2}(z,u-iQ)\right)\right)\nonumber\\
& \quad + \left( Q\left(f_{\geq 2},\bar C\bar z \right)- Q\left(f_{\geq 2}(z,u+iQ),\bar C\bar z \right)\right)\label{big-equiv}
\end{aligned}\end{equation}

Let us set $C=\id$ and $s=1$. We shall write this equation as
\begin{equation}\label{conj-abstract}
{\mathcal L}(f_{\geq 2},g_{\geq 3}) = {\mathcal T}(z,\bar z, u;f_{\geq 2},g_{\geq 3},\Phi)-\Phi
\end{equation}
where ${\mathcal L}(f_{\geq 2},g_{\geq 3})$ (resp. ${\mathcal T}(z,\bar z, u;,f_{\geq 2},g_{\geq 3},\Phi)$) denotes the linear (resp.  nonlinear) operator defined on the left (resp. right) hand side of \re{big-equiv}.
The linear operator $\mathcal{L}$ maps the space the space of quasihomogeneous holomorphic vector fields $QH_{k-2}$ of quasi degree $k-2\geq 1$, that is, of expressions of the form
\[ f_{k-1} (z,w) \dop{z} + g_k (z,w) \dop w = f_{k-1} (z,w) \cdot \begin{pmatrix}
  \dop{z_1} \\ \vdots  \\ \dop{z_n}
\end{pmatrix} + g_k (z,w) \cdot \begin{pmatrix}
  \dop{w_1} \\ \vdots  \\ \dop{w_d}
\end{pmatrix}, \]
where $f_{k-1}$ and $g_k$ are quasi-homogeneous polynomials taking values in $\Cn$ and  $\Cd$, respectively to the space of quasi-homogeneous polynomials of degree $k\geq 3$ with values in $\Cd$.  We shall denote  the restriction of ${\mathcal L}$ to $QH_{k-2}$ by ${\mathcal L}_k$.

By expanding into quasihomogeneous component,  equation \re{conj-abstract} reads
\begin{equation}
{\mathcal L}(f_{k-1},g_{k}) = \{{\mathcal T}(z,\bar z, u;f_{\geq 2},g_{\geq 3},\Phi)\}_{k}-\Phi_{k} =  \{{\mathcal T}(z,\bar z, u; f_{\geq 2}^{<k-1},g_{\geq 3}^{<k}),\Phi_{<k}\}_{k}-\Phi_k.\label{conjugacy-k}
\end{equation}
Here,  $\{{\mathcal T}(z,\bar z, u; f_{\geq 2},g_{\geq 3}),\Phi\}_{k}$ (resp. $f_{\geq 2}^{<k-1}$) denotes the quasi-homogeneous term of degree $k$ (resp. $<k-1$) of the Taylor expansion of ${\mathcal T}(z,\bar z, u; f_{\geq 2},g_{\geq 3},\Phi)$ (resp. $f_{\geq 2}$) at the origin.

It is well-known (see e.g. \cite{Baouendi:1998uo}) 
that the operator $\mathcal{L}$, considered 
as  an operator on the space of (formal) holomorphic vector 
fields, under our assumptions of 
linear independence and nondegeneracy of the form $Q$, has a 
finite-dimensional (as a real vector space) kernel, which coincides 
with the space of infinitesimal CR automorphisms of the model quadric 
$\imag w = Q(z,\bar z)$ fixing the origin. It follows that, for any $k\geq 3$, any complementary 
subspace $\mathcal N_k$ to the image of $\mathcal{L}_k $ gives rise to 
a {\em formal normal form} of degree $k$. By induction on $k$, we prove that there exists a $(f_{k-1},g_k)$ and a $\Phi_{k}\in {\mathcal N}_k$ such that equation \re{conjugacy-k} is solved. 
 A a consequence, up to elements of the space 
of infinitesimal automorphisms of the model quadric, there exists a unique 
formal holomorphic change of coordinates such that the ``new'' defining function
lies in the space of normal form $\mathcal{N}:=\bigoplus_{k\geq 3}{\mathcal N}_k$. 

In order to find a way to choose $\mathcal{N}$ with the additional property 
that for analytic defining functions, the change of coordinates is 
also analytic, we shall pursue a path which tries to rewrite the 
important components of $\mathcal{L}$ as partial differential operators. 

From now on, we write $\{h\}_{p,q}$ for the term  in the Taylor expansion of $h$ which is homogeneous of degree $p$ in  $z$ and of degree $q$ in $\bar z$. 
For a map $h= h(z,\bar z, u)$,  
we have $\{h\}_{p,q}=h_{p,q}(u)$ for some map $h_{p,q}(u)$ taking
values in the space of polynomials homogeneous of degree $p$ in  $z$ and of degree $q$ in $\bar z$ (with values in the same space as $h$), which is  analytic in a fixed domain of $u$ independent of $p$ and $q$ (provided that $h$ is analytic). 
We also will from now on write $f_k(z,u)$ for  the homogeneous polynomial of degree $k$ (in $z$) in the Taylor expansion of $f$. Even though this 
conflicts with our previous use of the subscript, no problems shall arise 
from the dual use. 

In what follows our notation can be considered as an abuse of notation: 
in an expression such as $D^k_ug(z,u)(Q+\Phi)^k$, we write as if $Q+\Phi$ 
was a scalar. This is harmless since we are only interested in a lower bound of 
the vanishing order of some fix set of monomials in $z,\bar z$. However,
if one decides to consider $D^k_u g$ as a symmetric multilinear 
form and considers powers as appropriate ``filling'' of these 
forms by arguments, one can also consider the equations as 
actual equalities. 

 We have
\begin{equation}\label{g}
g_{\geq 3}(z,u+iQ)-g_{\geq 3}(z,u+iQ+i\Phi)=\sum_{k\geq 1}\frac{i^k}{k!}D^k_ug_{\geq 3}(z,u)\left(Q^k-(Q+\Phi)^k \right),
\end{equation}
and
$$
Q\left(f_{\geq 2}-f_{\geq 2}(z,u+iQ),\bar C\bar z \right)= Q\left(\sum_{k\geq 1}\frac{i^k}{k!}D^k_uf_{\geq 2}(z,u)\left(Q^k-(Q+\Phi)^k \right),\bar C\bar z \right),
$$
and therefore
$$
\left\{D^k_ug(z,u)\left(Q^k-(Q+\Phi)^k \right)\right\}_{p,q} = \sum_{l=0}^pD^k_ug_l(z,u)\left\{Q^k-(Q+\Phi)^k\right\}_{p-l,q} 
$$
and 
\begin{eqnarray}
\left\{Q\left(f_{\geq 2}-f_{\geq 2}(z,u+iQ),\bar C\bar z \right)\right\}_{p,q} &= &Q\left(\left\{f_{\geq 2}-f_{\geq 2}(z,u+iQ)\right\}_{p,q-1},\bar C\bar z \right)\label{Qf}\\
&=& \sum_{l=0}^p\sum_{k\geq 1}\frac{i^k}{k!}Q\left(D^k_uf_l(z,u)\left\{Q^k-(Q+\Phi)^k\right\}_{p-l,q-1} ,\bar C\bar z\right).\nonumber
\end{eqnarray}

\section{Equations for the \texorpdfstring{$(p,q)$}{(p,q)}-term of the conjugacy equation}\label{sec:equationsterms}

For any non negative integers $p,q$, let us set
$$
T_{p,q}:=\left\{\tilde \Phi_{\geq 3}\left(f,\bar f, \frac{1}{2}(g+\bar g)\right)- \tilde \Phi_{\geq 3}\left(Cz,\bar C\bar z, su\right)\right\}_{p,q}.
$$
\subsection{\texorpdfstring{$(p,0)$}{(p,0)}-terms}

According to \re{5p0}, \re{6p1},\re{Qffp0} 
, the $(p,0)$-term of the conjugacy equation \re{conj}, for $p\geq 2$, is
\begin{equation}
\frac{1}{2i}g_{p}=Q(f_{p},\bar f_0)+T_{p,0}+\tilde \Phi_{p,0}\left(Cz,\bar C\bar z, su\right)-s\Phi_{p,0}(z,\bar z, u)=:F_{p,0}.
\label{p0}
\end{equation}
For $p=1$, the linear map $\mathcal L$ gives a new term $-Q(Cz,\bar f_0)$ to the previous one. Hence, we have
\begin{equation}
\frac{1}{2i}g_1-Q(Cz,\bar f_0)=Q(f_1,\bar f_0)+T_{1,0}+\tilde \Phi_{1,0}\left(Cz,\bar C\bar z, su\right)-s\Phi_{1,0}(z,\bar z, u)=:F_{1,0}.
\label{10}
\end{equation}
For $p=0$, we have
\begin{equation}\label{00}
\imag (g_0)=Q(f_0,\bar f_0)+T_{0,0}+\tilde \Phi_{0,0}\left(Cz,\bar C\bar z, su\right)-s\Phi_{0,0}(z,\bar z, u)=:F_{0,0}
\end{equation}

\subsection{\texorpdfstring{$ (p,1) $}{ (p,1) }-terms}

According to \re{5p1}, \re{6p1},\re{Qffp1} 
, the $(p,1)$-term of the conjugacy equation \re{conj}, for $p\geq 3$, is 
\begin{eqnarray}
\frac{1}{2}D_ug_{p-1}Q-Q(f_p,\bar C\bar z)&=&\imag \left(iD_ug_{p-2}(u)\Phi_{2,1}+iD_ug_{p-1}(u)\Phi_{1,1}\right)+Q(f_p,\bar f_1)\nonumber\\
&&+iQ(Df_{p-1}(Q+\Phi_{1,1}), \bar f_0)-iQ(f_{p-1}, D_u\bar f_0(Q+\Phi_{1,1}))\nonumber\\
&&+\tilde \Phi_{p,1}(Cz,su)-s\Phi_{p,1}(z,u)+T_{p,1}=:F_{p,1}.
\label{p1}
\end{eqnarray}
For $p=2$, we get the same expression on the right hand side, but the linear part gains the term $iQ(Cz, D_u\bar f_0Q)$. Hence, we have
\begin{equation}
\frac{1}{2}D_ug_{1}Q-Q(f_2,\bar C\bar z)+iQ(Cz, D_u\bar f_0Q)=F_{2,1}.\label{21}
\end{equation}
For $p=1$, we have
\begin{equation}
D_u \real (g_0(u)) \cdot  Q -Q(Cz,\bar f_1(\bar z,u))-Q(f_1(z,u),\bar C\bar z)= F_{1,1}\label{11}
\end{equation}
\subsection{\texorpdfstring{$ (3,2) $}{ (3,2) }}

For the $(3,2)$-terms, we obtain
\begin{equation}\begin{aligned}
-\frac{1}{4i}D^2_ug_1(z,u)Q^2&+\frac{1}{2}Q(Cz,D_u^2\bar f_0(u)Q^2)-iQ(D_uf_2(z,u)Q, \bar C\bar z)= \re{Qff32}+\frac{1}{2i}\re{532}+\re{632}\\
&+\tilde \Phi_{3,2}(Cz,\bar C\bar z,su)-s\Phi_{3,2}(z,\bar z, u)
-\frac{1}{2i}\overline{\re{532}}+\overline{\re{632}}
+ \re{bigsum}_{3,2}.
\end{aligned}\label{32}
\end{equation}
where $\re{bigsum}_{3,2}$ denotes the $(3,2)$-component of $\re{bigsum}$, $\overline{\re{532}}$ (resp. $\overline{\re{632}}$) denotes the $(3,2)$-component 
of $\left(\bar g_{\geq 3}(\bar z,u-iQ)-\bar g_{\geq 3}\right)$
 (resp. $\left( Q\left(Cz,\bar f_{\geq 2}\right)-Q\left(Cz,\bar f_{\geq 2}(z,u-iQ)\right)\right)$).

\subsection{ \texorpdfstring{$ (2,2) $}{ (2,2) }-terms}
For the $(2,2)$ term, we have
\begin{equation}\begin{aligned}
-\frac{1}{2}D^2_u \imag(g_0) \cdot Q^2+& iQ(Cz, D_u\bar f_1(\bar z,u)\cdot Q)
-iQ(D_uf_1(z,u)\cdot Q,\bar C\bar z)= \re{Qff22}+\frac{1}{2i}\re{522}+\re{622}\\\
&+\tilde \Phi_{2,2}(Cz,\bar C\bar z,su)-s\Phi_{2,2}(z,\bar z, u) - \frac{1}{2i}\overline{\re{522}}+\overline{\re{622}}
+ \re{bigsum}_{2,2}=: F_{2,2}.
\end{aligned}
\label{22}
\end{equation}

\subsection{\texorpdfstring{$ (3,3) $}{ (3,3) }-terms}

For the $(3,3)$ term, we have
\begin{equation}\label{33}\begin{aligned}
-\frac{1}{6}D^3_u \real (g_0)\cdot Q^3 &+Q(Cz, D_u^2\bar f_1(\bar z,u)\cdot Q^2)+Q(D_u^2f_1(z,u)\cdot Q^2,\bar C\bar z)= \re{Qff33}+\frac{1}{2i}\re{533}+\re{633}\\
&+\tilde \Phi_{3,3}(Cz,\bar C\bar z,su)-s\Phi_{3,3}(z,\bar z, u) - \frac{1}{2i}\overline{\re{533}}+\overline{\re{633}}+ \re{bigsum}_{3,3}=: F_{3,3}.
\end{aligned}
\end{equation}

\section{A full formal normal form: Proof of \cref{thm:main1}} 
\label{sec:a_full_formal_normal_form_proof_of_cref}
We recall that we have used above the following notation for the grading of the transformation~: we consider transformations of the form 
\[z^*=z+\sum_{k\geq 0}f_{k}, w^*=w+\sum_{k\geq 0}g_{k}\] 
where $f_k(z,w)$ and $g_{k}(z,w)$ are homogeneous of degree $k$ in $z$; $f_k$ and 
 $g_k$ can also be considered as power series maps in $w$ valued in the 
 space of holomorphic polynomials in $z$ of degree $k$ taking values in $\Cn$ and $\Cd$,
 respectively. We then collect from the equations computed in \cref{sec:equationsterms}:
Using \re{00}, \re{p0} and \re{p1}, we have 
\begin{eqnarray*}
\imag(g_0) &=& F_{0,0}\\
\frac{1}{2i}g_{p}&=&F_{p,0}\\
\frac{1}{2}D_ug_{p}Q-Q(f_{p+1},\bar z)&=&F_{p+1,1}
\end{eqnarray*}
Using \re{21} and \re{32}, we have
\begin{eqnarray*}
\frac{1}{2}D_ug_{1}Q-Q(f_2,\bar z)+iQ(z, D_u\bar f_0Q)&=&F_{2,1}\\
-\frac{1}{4i}D^2_ug_1(z,u)Q^2+\frac{1}{2}Q(z,D^2\bar f_0(u)Q^2)-iQ(D_uf_2(z,u)Q, \bar z)&=&F_{3,2}
\end{eqnarray*}
Using \re{11},\re{22} and \re{33}, we have $\imag (g_0)=F_{0,0}$
\begin{eqnarray*}
D_u \real (g_0(u)) \cdot  Q -Q(z,\bar f_1(\bar z,u))-Q(f_1(z,u),\bar z)&= &F_{1,1}\\
-\frac{1}{2}D^2_u \imag{(g_0)} \cdot Q^2 +  iQ(z, D_u\bar f_1(\bar z,u)\cdot Q)-iQ(D_uf_1(z,u)\cdot Q,\bar z)&=& F_{2,2}\\
-\frac{1}{6}D^3_u \real (g_0)\cdot Q^3 +Q(z, D_u^2\bar f_1(\bar z,u)\cdot Q^2)+Q(D_u^2f_1(z,u)\cdot Q^2,\bar z)&=&F_{3,3}
\end{eqnarray*}
In order to obtain an operator $\mathcal{L}$ acting 
on the space of maps, and taking values in the space of formal power series in $\fps{z,\bar z,u}^d$ endowed with Hermitian product \ref{e:deffischer2}, we simplify a bit the left hand sides, 
express the linear occurence of the terms $\Phi_{p,q}$ of the ``new'' manifold, and 
change the right hand side accordingly: 
\begin{equation}
 \label{e:formalsys} \begin{aligned}
 \imag g_0 &= \Phi_{0,0} + \tilde F_{0,0} \\
 \frac{1}{2i} g_p &= \Phi_{p,0} + \tilde F_{p,0} \\
 - Q(f_{p+1}, \bar z) &= \Phi_{p+1, 1} + \tilde F_{p+1,1} \\
 - Q (f_2 , \bar z) + i Q(z,D_u \bar f_0 Q) &= \Phi_{2,1} + \tilde F_{2,1}\\
 \frac{1}{2}Q(z,D^2\bar f_0(u)Q^2)-iQ(D_uf_2(z,u)Q, \bar z)&=\Phi_{3,2} + \tilde F_{3,2}\\
 D_u \real (g_0(u)) \cdot  Q -Q(z,\bar f_1(\bar z,u))-Q(f_1(z,u),\bar z)&= \Phi_{1,1} + \tilde F_{1,1}\\
 iQ(z, D_u\bar f_1(\bar z,u)\cdot Q)-iQ(D_uf_1(z,u)\cdot Q,\bar z)&= \Phi_{2,2} + \tilde F_{2,2}\\
-\frac{1}{6}D^3_u \real (g_0)\cdot Q^3 +Q(z, D_u^2\bar f_1(\bar z,u)\cdot Q^2)+Q(D_u^2f_1(z,u)\cdot Q^2,\bar z)&= \Phi_{3,3} + \tilde F_{3,3}
 \end{aligned}
 \end{equation}
 At this point, the {\em existence of some formal normal form} 
 follows by studying the injectivity of the linear operators
 appearing on the left hand side of \eqref{e:formalsys} (as 
 already explained in Beloshapka \cite{Beloshapka:1990ib}). 
 We now explain how we 
 can reach the normalization conditions from \cref{sec:the_normalization_conditions}. 

 For the terms $\Phi_{p,0}$ (for $p\geq 0$) this is simply done by 
 applying the conditions \eqref{e:normalizep0} to \eqref{e:formalsys}
 and substituting the resulting expressions for 
 $\imag g_0$ and $g_p$ into the remaining equations. 

 In order to obtain the normalization 
 conditions for the terms 
 $\Phi_{p,1}$, we apply the operator
  $\mathcal{K}^*$ to lines $3$ and $4$ of \eqref{e:formalsys}, 
  yielding after application of the normalization 
  conditions \eqref{e:normalizep1}
  a system of implicit equations for $f_p$ for $p\geq 2$.
 If we substitute the solution of this problem back into the remaining equations, we obtain
 (now already using the operator notation)
 \begin{equation}
 \label{e:formalsys2} \begin{aligned}
 -\frac{1}{2} \bar{\mathcal{K}} \Delta^2 f_0&=\Phi_{3,2} - i\Delta \Phi_{2,1}  
  + {\hat F_{3,2}}\\
 \Delta \real (g_0) - \bar{\mathcal{K}} \bar f_1 - {\mathcal{K}}f_1&= \Phi_{1,1} + \hat F_{1,1}\\
 i\bar{\mathcal{K}}\Delta f_1-i {\mathcal{K}} \Delta f_1&= \Phi_{2,2} + \hat F_{2,2}\\
-\frac{1}{6} \Delta^3 \real (g_0) + \bar{\mathcal{K}} \Delta^2 \bar f_1+ {\mathcal{K}} \Delta^2 f_1&= \Phi_{3,3} + \hat F_{3,3},
 \end{aligned}
 \end{equation} 
We can then define the space of normal forms to be the kernel of the adjoint of the 
operator $\mathcal{L} \colon \fps{u}^n \times \fpstwo{R}{u}^d \times \fps{u}^{n^2} \to \mathcal{R}^d_{3,2} \oplus \mathcal{R}^d_{1,1} \oplus \mathcal{R}^d_{2,2} \oplus \mathcal{R}^d_{3,3}  $
\[\mathcal{L} (f_0, \real g_0 , f_1) = \begin{pmatrix}
  -\frac{1}{2} \bar{\mathcal{K}} \Delta^2 f_0\\
 \Delta \real (g_0) - \bar{\mathcal{K}} \bar f_1 - {\mathcal{K}}f_1\\
 i\bar{\mathcal{K}}\Delta \bar f_1-i {\mathcal{K}} \Delta f_1\\
-\frac{1}{6} \Delta^3 \real (g_0) + \bar{\mathcal{K}} \Delta^2 \bar f_1+ {\mathcal{K}} \Delta^2 f_1
\end{pmatrix}\] with respect to the Hermitian products on 
these spaces. The solution can be found by constructing the 
homogeneous terms in $u$ (!) of $f_0$, $\psi$, $f_1$ inductively,
since the right hand sides only contains terms of lower order 
homogeneity (and thus, found in a preceding step). However, the 
$f_1$ enters the nonlinear terms in such a way as to render 
the system \eqref{e:formalsys2} {\em singular} when 
one attempts to interpret it as (a system of complete partial) differential equations, because the equation for the $(3,2)$-term
contains in the $\tilde F_{3,2}$ an $f_1''$, thereby linking 
$\bar f_0'$ with $f_1''$; therefore, the appearance 
of $f_0'''$ in the term $\tilde F_{3,3}$ acts as if
it contained an $f_1'''$, which exceeds the order of 
derivative $f_1''$ appearing in the linear part. 

However, in the formal sense, a solution to this equation
exists and is unique 
modulo $\ker \mathcal{L}$, which we know to be a finite dimensional space, and in particular 
unique if we require  $(f_0, \real g_0 , f_1) \in \imag \mathcal{L}^*$. This gives us exactly
exactly our normal form space, and thus gives \cref{thm:main1}.

\section{Analytic solution to the weak conjugacy problem: Proof of \cref{thm:main2}}
\subsection{Step 1: Preparation}
In this section, we shall first  find a change of coordinates 
of the form $z'=f_0 (w) + z $ and $w'=w +  i G(z,w)$, where $G (0,w)= \bar G (0,w)$,  
in order  to ensure the 
 normalization conditions  $\Phi_{p,0}=\Phi_{0,p}=0$  for all non negative integers $p$.
 This condition is equivalent to the fact that the coordinates $(z,w)$ are {\em normal} in 
 the sense of \cref{ss:complexdef}. In particular, if we consider a complex defining 
 equation $\tilde\theta$ for our perturbed quadric $\imag w' = Q (z', \bar z') + \tilde \Phi (z', \bar z', \real w')$, then we see by 
 \cref{lem:normal} that 
 $(z,w)$ are normal coordinates if and only if
\begin{equation}
\label{e:phi00prepa}
 w + i G(z,w) = \tilde \theta (z + f_0 (w), \bar f_0 (w), w - i G(0,w) ),  
\end{equation}
or eqivalently if and only if 
\begin{equation}\label{e:phi00prep}
\frac{1}{2} \left( G(z,w) + \bar G (0,w)  \right) = \tilde \varphi \left(z + f_0 (w) , \bar f_0 (w), w + \frac{i}{2} (G(z,w) - \bar G(0,w))  \right)
\end{equation}
We can thus first obtain $G(0,w)$ from  the equation derived from \eqref{e:phi00prep} by putting $z=0$: 
\[  G(0,w) = \tilde \varphi (f_0 (w), \bar f_0 (w), w ) \] 
and then define $G(z,w)$ by \eqref{e:phi00prepa},
obtaining 
\[ G(z,w)= \frac{1}{i} \left( \tilde\theta(z+ f_0(w), \bar f_0(w), w - i \tilde\varphi(f_0 (w), \bar f_0 (w), w )) - w \right). \]
 Summing up: we can therefore replace the given defining funtion by this new one, and assume 
from now on that $f_0 =0$ and that the coordinates are already normal. This change of coordinates is rather standard and can be 
found in e.g. \cite{Baouendi:1999uy}.

\subsection{Step 2: Normalization of \texorpdfstring{$ (1,1)$, $(2,2)$, $(3,3)$, and  $(2,1)$}{ (1,1), (2,2), (3,3), and  (2,1)}-terms}

In this section we shall normalize further the equations of the manifold. Namely, we shall proceed a change of coordinates such that, not only, the manifold is prepared as in the previous section, but also its $(1,1)$, $(2,1)$, $(2,2)$, and $(3,3)$ terms belong to a subspace of normal forms. 
We will now (after having prepared with the given map $f_0$) 
only consider a change of coordinates 
of the form $z'=z+f(z,w)=z+f_1+f_2$ and $w'=w+ g(z,w)=w+g_0$ which satisfies
 $f(0,w)=0$, $g(0)=0$ and $Df(0)=0$, $Dg(0)=0$. We assume 
 that 
$\Phi_{p,0}=\Phi_{0,p}=0$, $0\leq p$, i.e. that 
$g$ has been chosen according to the solution 
of the implicit function theorem in the preceding subsection; with 
the preparation above, i.e. $\tilde \Phi_{p,0} = \tilde \Phi_{0,p} = 0$, and 
the restriction on $f$ this amounts to $\imag g_0 = 0$.
 Using the left hand side of equations \re{11}, \re{22},\re{33},\re{21} and \re{32} together with $f_0=0$, let us set
\begin{eqnarray}
L_{1,1}(f_1, g_0) &:=& D_u\real(g_0(u))\cdot Q -Q(z,\bar f_1(\bar z,u))-Q(f_1(z,u),\bar z)\label{l11}\\
L_{2,2}(f_1, g_0) &:=&\frac{-1}{2}D^2_u\imag(g_0)\cdot Q^2+iQ(z, D_u\bar f_1(\bar z,u)\cdot Q)-iQ(D_uf_1(z,u)\cdot Q,\bar z)\label{l22}\\
L_{3,3}(f_1, g_0) &:=&\frac{-1}{6}D^3_u\real(g_0)\cdot Q^3+Q(z, D_u^2\bar f_1(\bar z,u)\cdot Q^2)+Q(D_u^2f_1(z,u)\cdot Q^2,\bar z)\label{l33}\\
L_{2,1}(f_2) & = &
-Q(f_2,\bar z)\label{l21}\\
L_{3,1}(f_3) & = &
-Q(f_3,\bar z)\label{l31}\\
\end{eqnarray}

Therefore, equations \re{11},\re{22} and \re{33} read~:
\begin{eqnarray}
L_{1,1}(f_1, g_0)&=&\real (D_ug_{0}(u))\Phi_{1,1}+Q(f_1,\bar f_1)\nonumber\\
&&+\tilde \Phi_{1,1}(z,\bar z,u)-\Phi_{1,1}(z,\bar z, u)\nonumber\\
&&+D_z\tilde \Phi_{1,1}(z,\bar z,u)f_1(z,u)+D_{\bar z}\tilde \Phi_{1,1}(z,\bar z,u)\overline{ f_1(z,u)}\label{matrix11}\\
L_{2,2}(f_1, g_0)&=& iQ(D_uf_1(Q+\Phi_{1,1}),\bar f_1)-iQ(f_1, D_u\bar f_1(Q+\Phi_{1,1}))\nonumber\\
&&+2\real(Q(iD_uf_1(u)\Phi_{1,1},\bar z))+ \re{bigsum}_{2,2}\nonumber\\
&&+\tilde \Phi_{2,2}(z,\bar z,u)-\Phi_{2,2}(z,\bar z, u)+Q(f_2,\bar f_2)\nonumber\\
&& +\imag\left(iD_ug_0(u)\Phi_{2,2}+\frac{1}{2}D_u^2g_0(u)(2\Phi_{1,1}Q+\Phi_{1,1}^2)\right)\label{matrix22}\\
L_{3,3}(f_1, g_0)&=&Q(iD^2_uf_1(Q+\Phi_{1,1})^2)),\bar f_1) +Q(f_1, -iD^2_u\bar f_1(Q+\Phi_{1,1})^2)\nonumber\\
&&+2\real\left(Q(iD_uf_1(u)\Phi_{2,2}, \bar z)+\frac{1}{2}Q\left(\frac{1}{2}D_u^2f_1(u)(2\Phi_{1,1}Q+\{\Phi^2\}_{2,2}),  \bar z\right)\right)\nonumber\\
&&+\imag\left(iD_ug_0(u)\Phi_{3,3}+\frac{1}{2}D_u^2g_0(u)(2\Phi_{2,2}Q+\{\Phi^2\}_{3,3})\right.\nonumber\\
&& \left.-\frac{i}{6}D_u^3g_0(u)(3\Phi_{1,1}^2Q+\Phi_{1,1}^3+3\Phi_{1,1}Q^2)\right)\nonumber\\
&&+\tilde \Phi_{3,3}(z,\bar z,u)-\Phi_{3,3}(z,\bar z, u)\nonumber\\
&&+ \re{bigsum}_{3,3}\label{matrix33}
\end{eqnarray}
Furthermore, equation \re{21} for $p=2,3$ reads~:
\begin{equation}
\begin{aligned}
L_{2,1}(f_2) &= \real \left(D_ug_{0}(u)\right)\Phi_{2,1}
+Q(f_2,\bar f_1) 
+\tilde \Phi_{2,1}(z,\bar z, u)-\Phi_{2,1}(z,\bar z, u)+T_{2,1} \\
L_{3,1}(f_3) &= \real \left(D_ug_{0}(u)\right)\Phi_{3,1}
+Q(f_3,\bar f_1)
+\tilde \Phi_{3,1}(z,\bar z, u)-\Phi_{3,1}(z,\bar z, u)+T_{3,1}
\end{aligned}
\label{matrix21} 
\end{equation}

Let us recall that the operator $\Delta$ is given by 
$\Delta\colon \fpstwo{\mathcal{R}_{p,q}}{u}\to \fpstwo{\mathcal{R}_{p+1,q+1} }{u}  $, $\Delta R (u) = D_u R (u).Q(z,\bar z)$.
Then we have
\begin{equation}
    \label{L1}
    L_{1}(f_1,\real(g_0))=\left(
    \begin{matrix}
    \Delta \real (g_0) -2 \real Q(f_1,\bar z)\\
    - 2 \imag Q(\Delta f_1,\bar z)\\
    -\tfrac{1}{6} \Delta^3 \real (g_0) +  \real Q(\Delta^2 f_1,\bar z)
  \end{matrix}\right).
\end{equation}
Let us write
\begin{equation}
    \label{L2}
    L_{2}(f_2,f_3)= \begin{pmatrix}
    -Q(f_2, \bar z) \\
    -Q(f_3, \bar z)  
    \end{pmatrix}
\end{equation}
The system \eqref{matrix11}--\eqref{matrix21} now reads 
\begin{equation}\label{equ1}
L(f_1, f_2,f_3,\real(g_0))= {\mathcal G}(u,D_u^if_1, D_u^j\real(g_0), D_u^lf_2, \Phi_{123})
\end{equation}
where the indices ranges are:~$0\leq i\leq 2$, $0\leq j\leq 3$, 
and $0\leq l\leq 1$. Also, $\Phi_{123}$ stands for $(\Phi_{1,1},\Phi_{2,2},\Phi_{3,3},\Phi_{2,1},\Phi_{3,1})$. 
Let us emphasize the dependence of ${\mathcal G}$ on $\Phi_{123}$ below. We have 
\begin{equation}\label{equ1-modif}
{\mathcal G}=-(I-D_u\real(g_0))\Phi_{123}+\tilde {\mathcal G}(u,D_u^if_1, D_u^j\real(g_0), D_u^kg_1, D_u^lf_2, \Phi_{123})
\end{equation}
where $D_u\real(g_0)\Phi_{123}$ stands for  
$$
(D_u\real(g_0)\Phi_{1,1},D_u\real(g_0)\Phi_{2,2},D_u\real(g_0)\Phi_{3,3},D_u\real(g_0)\Phi_{2,1},D_u\real(g_0)\Phi_{3,1}).
$$
Furthermore, among $\Phi_{123}$, the $(i,j)$-component of $\tilde {\mathcal G}$ depends only on $\Phi_{\leq i-1,\leq j-1}$.

Here, ${\mathcal  G}$ is analytic in $u$ 
in a neighborhood of the origin, polynomial in its other arguments and 
\begin{equation}
    \label{L}
    L(f_1, f_2,f_3,\real(g_0))=\left(
    \begin{matrix}
    L_{1}(f_1,\real(g_0))\\
    L_{2}(f_2,f_3)
  \end{matrix}\right).
\end{equation}

The linear operator $L_1$ is defined from $(\real(g_0),f_1)\in \Bbb R\{u\}^d\times \Bbb C\{u\}^{n^2}\cong \Bbb R\{u\}^{k_3 + k_1}$ to ${\mathcal R}_{1,1}\{u\}\oplus {\mathcal R}_{2,2}\{u\}\oplus{\mathcal R}_{3,3}\{u\}\cong \Bbb R\{u\}^N$ for some $N$. 
The linear operator $L_2$ is defined from $(f_2,f_3)\in \Bbb C\{u\}^{n\binom{n+1}{2}} \times \C\{u\}^{n\binom{n+2}{3}} 
\cong \Bbb R\{u\}^{k_2+ k_4}$ to ${\mathcal R}_{2,1}\{u\} \times {\mathcal R}_{3,1} \cong \Bbb R\{u\}^M$ for some $M$. Each of these spaces is endowed with the (modified) Fisher scalar product of $\Bbb R\{u\}$. Here we have set~:
\begin{equation}\label{dimen}
k_1:=2n^2,\quad k_2:=2n\binom{n+1}{2},\quad  k_3:=d, \quad k_4:=2n\binom{n+2}{3}.
\end{equation}

Let ${\mathcal N}_{1}$ (resp.  ${\mathcal N}_{2}$) be the orthogonal subspace to the image of $L_{1}$ (resp. $L_{2}$) with respect to that scalar product~:
\begin{eqnarray}
{\mathcal R}_{1,1}\{u\}\oplus {\mathcal R}_{2,2}\{u\}\oplus{\mathcal R}_{3,3}\{u\}&=&\imag(L_{1})\oplus^{\bot} {\mathcal N}_{1}\nonumber\\
{\mathcal R}_{2,1}\{u\}\oplus {\mathcal R}_{3,1}\{u\} &=&\imag(L_{2})\oplus^{\bot} {\mathcal N}_{2}.\label{nf-spec}
\end{eqnarray}
These are the spaces of {\it normal forms} and they are defined to be the kernels of the adjoint operator with respect to the modified Fischer scalar product~: ${\mathcal N}_{1}=\ker L_{1}^*$, ${\mathcal N}_{2}=\ker L_{2}^*$; in terms of the normal form spaces introduced in \cref{sec:the_normalization_conditions}, we have in a natural way ${\mathcal N}_{1} \cong {\mathcal N}^{1}$ and ${\mathcal N}_{2} \cong {\mathcal N}^2_3$. Let $\pi_{i}$ be the orthogonal projection onto the range of $L_{i}$ and $\pi:=\pi_1\oplus\pi_2$.

The set of the seven previous equations encoded in \re{equ1} has the seven real unknowns $\real(f_1), \imag(f_1), \real(f_2), \imag(f_2), \real{(f_3)}, \imag{(f_3)}, \real(g_0)$. 


Let us project \re{equ1} onto the kernel of $L^*$, which is orthogonal to the image of $L$
with respect to the Fischer inner product, i.e. we impose the normal form conditions 
\eqref{e:defineNf}. 

 Since $\Phi_{123}$ belongs to that space, we have
$$
0=-(I-(I-\pi)D_u\real(g_0))\Phi_{123}+(I-\pi)\tilde {\mathcal G}(u,D_u^if_1, D_u^j\real(g_0), D_u^lf_2, \Phi_{123}).
$$
In other words, we have obtained
\begin{equation}\label{Phi123}
\Phi_{123} =  \left((I-(I-\pi)D_u\real(g_0)\right)^{-1}(I-\pi)\tilde {\mathcal G}(u,D_u^if_1, D_u^j\real(g_0), D_u^lf_2, \Phi_{123}).
\end{equation}
According to the triangular property mentioned above, we can express successively $\Phi_{1,1},\cdots,\Phi_{3,3}$ as an analytic function of only $u,D_u^if_1, D_u^j\real(g_0), D_u^lf_2$. 
Substituting in \re{equ1} and projecting down onto the image of $L$, we obtain
\begin{equation}\label{equ2}
L(f_1, f_2,f_3, \real(g_0))= \pi{\mathcal F}(u,D_u^if_1, D_u^j\real(g_0), D_u^lf_2,f_3)
\end{equation}
The equations corresponding to $L_2$ then turn into a set of implicit equations for $f_2$ and 
$f_3$, which we can solve uniquely in terms 
of $f_1$ and $\real g_0$. After substituting those 
solutions back into ${\mathcal{F}}$, we satisfy the normalization conditions in $\mathcal{N}_2$, 
and 
we turn up with a set of equations for $f_1$ and $\real g_0$: 
\begin{equation}\label{equ2a}
L_1(f_1,  \real(g_0))= \pi_1 {\mathcal F}_1(u,D_u^if_1, D_u^j\real(g_0))
\end{equation}
where the indices ranges are:~$0\leq i\leq 2$, $0\leq j\leq 3$, 
 and $0\leq l\leq 1$. Here, ${\mathcal F}_1$ denotes an analytic function of its arguments at the origin.

From now on, $\ord_0 f$ will denote the order of $f(z,\bar z,u)$ w.r.t $u$ at $u=0$. Let us recall that we always have
\begin{eqnarray}
\ord_0\tilde \Phi_{1,1}&\geq& 1\label{ord11}
\end{eqnarray}
We now claim that there is an analytic change of coordinates $z=z^*+f_1(z^*,w^*)+f_2(z^*,w^*) + f_3(z^*,w^*)$, $w=w^*+ g_0(w^*)$ such that also the diagonal terms of the new equation of the manifold are in normal form, that is $(\Phi_{1,1},\Phi_{2,2},\Phi_{3,3},\Phi_{2,1},\Phi_{3,1})\in\mathcal N_{1}\times \mathcal N_{2}$.  
In fact, we shall prove that there is exists a unique $(f_1,\real(g_0))\in \imag(L_{1}^*)$ with this property; if we would like 
to have {\em all} solutions to that problem, we will see that we can construct a unique solution for any given ``initial data'' in $\ker L_1$. 
Instead of working directly on equation \re{equ2a}, we shall first ``homogenize'' the derivatives of that system. By this we mean, that we apply operator $\Delta^2$ to the first coordinate of \re{equ2a} and  $\Delta$ to the second coordinate of \re{equ2a}. The resulting system reads
\begin{equation}\label{equ1b}
\tilde L_1( \tilde f_1,\real(\tilde g_0))= \tilde {\mathcal F}_1(u,D_u^i \tilde f_1, D_u^j\real(\tilde g_0))
\end{equation}
where 
\begin{equation}
    \label{L1b}
  \tilde L_{1}(\tilde f_1,\real(\tilde g_0))=\left(
    \begin{matrix}
    \Delta^3 \real (\tilde g_0) -2 \real Q(\Delta^2 \tilde f_1,\bar z)\\
    - 2 \imag Q(\Delta^2 \tilde  f_1,\bar z)\\
    -\tfrac{1}{6} \Delta^3 \real (\tilde g_0) +  \real Q(\Delta^2 \tilde  f_1,\bar z).
  \end{matrix}\right)=:{\mathcal L}_1(D_u^2\tilde f_1,D_u^3\real(\tilde g_0))
\end{equation}

Here, $\mathcal L_1$   denotes a  linear 
operator on the finite dimensional vector spaces ${\rm{Sym}}^2 (\Cd, \Cn) \times {\rm{Sym}}^3(\Cd, \Rd)$ 
, and we  have set $f_1 = j^1 f_1 + \tilde f_1$, $ g_0 = j^2 g_0 +\tilde g_0$, and 
$$
\tilde L_1:=\tilde{\mathcal D}\circ L_1,\quad \tilde{\mathcal F}_1 (u,D_u^i \tilde f_1, D_u^j\real(\tilde g_0)):= \tilde{\mathcal D}\circ \pi_1\circ{\mathcal F}_1 ( u, D_u^i f_1, D_u^j g_0 ) ,
$$
where
$$
\tilde{\mathcal D}:= \left(\begin{matrix}\Delta^2& 0&0\\ 0&\Delta&0\\ 0&0&I\end{matrix}\right).
$$

 Using the right hand side of \re{matrix11}, \re{matrix22}, \re{matrix33}, 
 and differentiating accordingly, 
 we see  that $\text{ord}_0(\tilde{\mathcal F}(u,0))\geq 1$. 

Let us set ${\bf m}=(m_1, m_3) = (2,3)$ and ${\mathcal F}_{2,\bf m}^{\geq 0} :=\left(\mathbb A_d^{k_1}\right)_{\geq m_{1}} \times \left(\mathbb A_d^{k_3}\right)_{\geq m_{3}}$ where the $k_i$'s are defined in \re{dimen}. Then a tuple of analytic functions 
$$
H:=(H_1,H_3)=(\tilde f_1, \real(\tilde g_0))
$$ 
with $\ord_0 f_1 \geq 2$, $\ord_0 g_0 \geq 3$ is an element of ${\mathcal F}_{2,\bf m}^{\geq 0}$. 
Then, equation \re{equ1b} reads~: 
\begin{eqnarray}
\mathcal S(H) & = &\tilde{\mathcal F}(u, j^{\bf m}_u H)\label{equ2solve}\\
{\mathcal S}(H) &:= &{\mathcal L}_1(D_u^2H_1,D_u^3H_3).\label{linear-final}
\end{eqnarray}
Let us show that the assumptions of the Big denominators theorem \ref{main-thm-BD} are satisfied. First of all, for any integer $i$, let us set $H^{(i)}:= (H_1^{(m_1+i)}, H_3^{(m_3+i)})$. Their linear span will be denotes by ${\cal H}^{(i)}$. Then, for any $i$, 
${\mathcal S}(H^{(i)})$ is homogeneous of degree of degree $i$. Let us consider the linear operator $d: (\tilde f_1,\real (\tilde g_0))\mapsto (D_u^2\tilde f_1,D_u^3\real(\tilde g_0))$. It is one-to-one from ${\mathcal F}_{2,\bf m}^{\geq 0}$ and onto the space of ${\rm{Sym}}^2 (\Cd, \Cn) \times {\rm{Sym}}^3(\Cd, \Rd)$-valued analytic functions in $(\Bbb R^d,0)$. Let $V\in \image({\cal S})$. We recall that ${\cal S}={\cal L}_1\circ d$. Let us set $K:=({\cal L}_1{\cal L}_1^*)^{-1}(V)$. It is well defined since $V$ is valued in the range of ${\cal L}_1$.
Therefore, $\|K\|\leq \alpha \|V\|$ for some positive number $\alpha$. On the other hand, we have 
$ {\cal L}_1^*K \in \image d $, so we can 
(uniquely) solve the equation 
$$
d(\tilde f_1, \real(\tilde g_0))= {\cal L}_1^*K.
$$
This solution now satisfies clearly~: 
\begin{eqnarray*}
\|\tilde f_1^{(i)}\|&\leq & \frac{|||{\cal L}_1^*|||\alpha}{i^2}\|V^{(i)}\|\\
\|\real(\tilde g_0^{(i)})\|&\leq & \frac{|||{\cal L}_1^*|||\alpha}{i^3}\|V^{(i)}\|\\
{\cal S} (\tilde f_1, \real(\tilde g_0))& = & {\cal L}_1d(\tilde f_1, \real(\tilde g_0))= {\cal L}_1{\cal L}_1^*K=V.
\end{eqnarray*}
Hence, ${\cal S}$ satisfies the Big Denominators property with respect to ${\bf m}=(m_1, m_3) = (2,3)$.

On the other hand, let us show that $\tilde{\mathcal F}(u, j^{\bf m}_mH)$ strictly increases the degree by $q=0$. This means that 
$$
\text{ord}_0\left(\tilde{\mathcal F}(u, j^{\bf m}_mH)-\tilde{\mathcal F}(u, j^{\bf m}_m\tilde H)\right)> \text{ord}_0(H-\tilde H).
$$
According to Corollary \ref{q=0} of Appendix \ref{sec-PDEs}, we just need to check that the system is regular. 

%

So let us now prove that the analytic differential map $\tilde{\mathcal F}(u,j_u^{\bf m})$ is {\it regular} in the sense of definition \ref{def-nonlin-bd}. To do so, we have to differentiate each term of $\tilde{\mathcal F}(u,j_u^{\bf m})$ with respect to the unknowns and their derivatives and show that the vanishing order of the functions their multiplied by are greater or equal than number $p_{j,|\alpha|}$ as defined in \re{p-reg} in definition \ref{def-nonlin-bd}. We recall that $q=0$. Therefore, these number are either $0$ (no condition) or $1$ (vanishing condition). The later correspond to the vanishing at $u=0$ of the coeffcient in front the highest derivative order of the unknown~:
$$
\frac{\partial \tilde{\mathcal F}_i}{\partial u_{j,\alpha}}(u,\partial H),\quad |\alpha|=m_j.
$$
where $H=(H_1,\ldots, H_r)\in \widehat\cF_{r,\bf m}^{\geq 0}$. 

But this condition in turn is automatically fulfilled by the construction of 
the system, since we have put exactly the highest order derivatives appearing in each of the
conjugacy equations appearing with a coefficient which 
is nonzero when evaluated at $0$ into the linear part of the operator, and no of the 
operations which we applied to the system changes this appearance. Let 
us recall that $f_1 (0) = \real g(0) = 0$. 
As a conclusion, we see that the map $\tilde{\mathcal F}(u,j_u^{\bf m})$ is {\it regular}. Furthermore, according to \re{linear-final}, the linear operator $\mathcal S$ has the Big Denominator property of order ${\bf m}=(2,1,3)$. 
Then according the Big Denominator theorem \ref{main-thm-BD} with $q=0$, equation \re{equ2solve} has a unique solution $H^{\geq 0}\in {\mathcal F}_{2,\bf m}^{\geq 0} :=\left(\mathbb A_d^{k_1}\right)_{\geq m_{1}} \times \left(\mathbb A_d^{k_3}\right)_{\geq m_{3}}$. This provides the terms of higher
order in the expansions of $f_1$ and $\real g_0$, and therefore, we proved the 
\begin{proposition}
There is exists a unique analytic map $(f_1,\real(g_0),f_2,f_3)\in \imag(L_{1}^*)\times \imag(L_{2}^*)$ 
such that under the change of coordinates $z=z^*+f_1(z^*,w^*)+f_2(z^*,w^*)+f_3(z^*, w^*)$, $w=w^*+ g_0(w^*)$, the $(1,1)$, $(2,1)$, $(2,2)$  and $(3,3)$ terms of the new equation of the manifold are in normal form, that is, $\Phi\in\mathcal{N}^0 \cap \mathcal{N}^d  \cap \mathcal{N}^1_{\leq 3}$ as defined in \cref{sec:the_normalization_conditions}. 
\end{proposition}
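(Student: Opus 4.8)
The plan is to reduce the normalization of the $(1,1)$, $(2,1)$, $(2,2)$, $(3,3)$ terms to solving the single analytic functional equation \eqref{equ2solve} via the Big Denominators theorem, and the structure of the argument follows the chain of reductions already prepared above. First I would recall that by the preparation in Step 1 we may assume $f_0 = 0$ and that the coordinates are already normal, so that $\Phi_{p,0} = \Phi_{0,p} = 0$ and $\imag g_0 = 0$; this makes the remaining unknowns $(f_1, f_2, f_3, \real(g_0))$. I would then rewrite the relevant lines of the conjugacy equation --- namely \eqref{matrix11}, \eqref{matrix22}, \eqref{matrix33}, \eqref{matrix21} --- as the single abstract system \eqref{equ1}, $L(f_1, f_2, f_3, \real(g_0)) = \mathcal{G}(u, D_u^i f_1, D_u^j \real(g_0), D_u^l f_2, \Phi_{123})$, and use the crucial \emph{triangularity} property: the $(i,j)$-component of $\tilde{\mathcal{G}}$ depends only on $\Phi_{\leq i-1, \leq j-1}$. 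Projecting onto $\ker L^*$ (which is exactly imposing $\Phi_{123}$ to lie in the normal form space $\mathcal{N}_1 \times \mathcal{N}_2$) yields \eqref{Phi123}, and the triangularity lets me solve successively for $\Phi_{1,1}, \dots, \Phi_{3,3}$ as analytic functions of $u$ and the jets of $f_1, \real(g_0), f_2$ alone, thereby eliminating $\Phi_{123}$ from the system.

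Next I would project back onto $\image L$ to obtain \eqref{equ2}, observe that the $L_2$-block is a set of \emph{implicit} equations in $f_2, f_3$ solvable uniquely in terms of $f_1, \real(g_0)$, and substitute these solutions back to arrive at the reduced system \eqref{equ2a} for the pair $(f_1, \real(g_0))$. The key technical move is then the homogenization step: I apply $\tilde{\mathcal{D}} = \operatorname{diag}(\Delta^2, \Delta, I)$ to \eqref{equ2a}, producing \eqref{equ1b} whose linear part $\tilde L_1 = \tilde{\mathcal{D}} \circ L_1$ factors through the finite-dimensional linear operator $\mathcal{L}_1$ on $\Sym^2(\Cd, \Cn) \times \Sym^3(\Cd, \Rd)$ acting on the \emph{top} derivatives $(D_u^2 \tilde f_1, D_u^3 \real(\tilde g_0))$. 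With the splitting $f_1 = j^1 f_1 + \tilde f_1$ and $g_0 = j^2 g_0 + \tilde g_0$, the problem becomes \eqref{equ2solve}--\eqref{linear-final}, $\mathcal{S}(H) = \tilde{\mathcal{F}}(u, j_u^{\mathbf{m}} H)$, for $H = (\tilde f_1, \real(\tilde g_0))$ with $\mathbf{m} = (2,3)$.

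The heart of the proof is verifying the two hypotheses of the Big Denominators theorem. For the \emph{Big Denominators property} of $\mathcal{S}$, I would use that $\mathcal{S} = \mathcal{L}_1 \circ d$ where $d$ is the top-derivative map; given $V \in \image(\mathcal{S})$, I set $K = (\mathcal{L}_1 \mathcal{L}_1^*)^{-1}(V)$ (well-defined since $V$ is valued in the range of $\mathcal{L}_1$), so that $\|K\| \leq \alpha \|V\|$, and then solve $d(\tilde f_1, \real(\tilde g_0)) = \mathcal{L}_1^* K$; because $d$ divides homogeneity-$i$ data by factors of order $i^2$ and $i^3$ respectively, the solution automatically satisfies the required bounds $\|\tilde f_1^{(i)}\| \leq c\, i^{-2} \|V^{(i)}\|$ and $\|\real(\tilde g_0^{(i)})\| \leq c\, i^{-3} \|V^{(i)}\|$. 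For the nonlinear side, I must show $\tilde{\mathcal{F}}(u, j_u^{\mathbf{m}} H)$ strictly increases the $u$-order (the case $q = 0$), which by \cref{q=0} reduces to \emph{regularity} in the sense of \cref{def-nonlin-bd}; here the only conditions to check are that the coefficients multiplying the \emph{highest-order} derivatives $\partial \tilde{\mathcal{F}}_i / \partial u_{j,\alpha}$ with $|\alpha| = m_j$ vanish at $u = 0$.

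\textbf{The main obstacle} I anticipate is precisely this regularity verification: one must confirm that in each of the conjugacy equations, the highest-order $u$-derivative of every unknown has been placed into the linear part $\mathcal{S}$ with a coefficient nonvanishing at $0$, so that all the genuinely nonlinear contributions carry a strictly positive vanishing order. This is where the earlier remark about the system being \emph{singular} as a PDE system --- the coupling of $\bar f_0'$ with $f_1''$ and the spurious appearance of $f_1'''$ through $f_0'''$ in $\tilde F_{3,3}$ --- must be handled carefully; the homogenization by $\tilde{\mathcal{D}}$ and the prior elimination of $f_2, f_3$ and $\Phi_{123}$ are exactly what renders the residual system regular. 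Granting regularity and the Big Denominators property, the theorem (with $q = 0$) delivers a unique analytic solution $H^{\geq 0} \in \mathcal{F}_{2,\mathbf{m}}^{\geq 0}$, which supplies the higher-order terms of $f_1$ and $\real g_0$; together with $j^1 f_1, j^2 g_0$ constrained to lie in $\imag L_1^*$ and the uniquely determined $f_2, f_3 \in \imag L_2^*$, this yields the asserted unique analytic change of coordinates placing $\Phi$ in $\mathcal{N}^0 \cap \mathcal{N}^d \cap \mathcal{N}^1_{\leq 3}$, completing the proof.
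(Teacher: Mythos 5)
Your proposal is correct and takes essentially the same route as the paper: it reproduces the paper's own chain of reductions exactly (elimination of $\Phi_{123}$ via the triangularity property and projection onto $\ker L^*$, implicit solution of the $L_2$-block for $f_2,f_3$, homogenization by $\tilde{\mathcal D}$, and the Big Denominators theorem \ref{main-thm-BD} with $q=0$, whose two hypotheses you verify just as the paper does). One small clarification: the singular coupling of $\bar f_0'$ with $f_1''$ that you flag as the main obstacle belongs to the \emph{full} formal problem of \cref{sec:a_full_formal_normal_form_proof_of_cref}, where the $(3,2)$-equation and the unknown $f_0$ are present; in the weak problem treated here $f_0$ has already been normalized to $0$ and the $(3,2)$-term is not touched, so that issue simply does not arise and regularity is, as you correctly conclude, automatic from the construction.
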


\subsection{Normalization of terms \texorpdfstring{$(m,1)$, $m\geq 4$}{(m,1), m>=4}}

Let us perform another change of coordinates of the form $z=z^*+\sum_{p\geq 4}f_p (z^*,w^*)$, $w=w^*$. According to \re{big-equiv}
we obtain by extracting the $(p, 1)$-terms, $p\geq 4$
\begin{equation}\label{equ-p1}
-Q(f(z,u),\bar z)=\tilde \Phi_{*,1}(z+f(z,u),\bar z, u)-\Phi_{*,1}(z,u),
\end{equation}
where $\tilde \Phi_{*,1}(z,\bar z,u):=\sum_{p\geq 4}\tilde \Phi_{p,1}(z,\bar z,u)$ is analytic at $0$. We recall that $\tilde \Phi(z,0,u)=\tilde \Phi(0,\bar z,u)=0$. Therefore, by Taylor expanding, we obtain
\begin{eqnarray*}
\{\tilde \Phi_{\geq 3}\left(f,\bar f, u\right)\}_{*,1}&=& \left\{\tilde \Phi_{\geq 3}\left(z+f_{\geq 2}(z,u),\bar z, u\right)\right.\\
&&+\frac{\partial \tilde \Phi_{\geq 3}}{\partial z}(f_{\geq 2}(z,u+iQ+i\Phi)-f_{\geq 2}(z,u))\\
&&+\left.\frac{\partial \tilde \Phi_{\geq 3}}{\partial \bar z}\bar f_{\geq 2}(\bar z,u-iQ-i\Phi)+\cdots\right\}_{*,1}\\
\end{eqnarray*}
Since $\tilde \Phi_{p,0}=0$ for all integer $p$, the previous equality reads
$$
\{\tilde \Phi_{\geq 3}\left(f,\bar f, u\right)\}_{*,1}= \tilde \Phi_{*,1}\left(z+f_{\geq 2}(z,u),\bar z, u\right).
$$
\subsubsection{A linear map} 
\label{sec:a_linearized_map}
In this section we  consider the linear map $\mathcal{K}$, which maps a germ of holomorphic function $f(z)$  at the origin to
\begin{equation}\label{e:defK}
  \mathcal{K} (f) = Q(f(z),\bar z).
\end{equation}
This complex linear operator $\mathcal{K}$ is valued in the space of power series in $z,\bar z$, valued in $\Cd$ which are linear in $\bar z$. We will first restrict $\mathcal{K}$ to a map $\mathcal{K}_m$ on 
the space of homogeneous polynomials of degree $m$ in $z$, with values in $\Cn$, 
For any $C,\delta>0$, let us define the Banach space
\begin{equation}\label{elem-banach}
{\mathcal B}_{n,C,\delta}:=\{f=\sum_m f_m, f_m\in {\mathcal H}_{n,m}, \|f_m\|\leq C \delta^m\}.
\end{equation}
Then, the map $\mathcal{K}_m$ is valued in the space $\mathcal{R}_{m,1}$ of polynomials in $z$ and $\bar z$, valued in $\Cd$, which are linear in $\bar z$ and homogeneous of degree $m$ in $z$. 
Let us consider the space ${\mathcal R}_{*,1}:=\bigoplus_m{\mathcal R}_{m,1}$ as well as
$$
\{f=\sum_m f_m\in {\mathcal R}_{*,1}, \|f_m\|\leq C\delta^m\}
$$ 
where $\|.\|$ denotes the modified Fischer norm and $C,\delta$ a positive numbers. The latter is a Banach space denoted ${\mathcal R}_{*,1}(C,\delta)$.

In particular, let us note that 
if we write $P_k = \sum_j P_k^j(z) \bar z_j$ with $P_k^j \in \mathcal{H}_m$, then
\begin{equation}\label{e:normslinear}
  \vnorm{P_k^{\phantom{j}}}^2 = (m+1)\sum_{j=1}^n \vnorm{P_k^j}^2.
\end{equation}
Let us write $P_k = \bar z^t \mathbf{P}_k$ where $\mathbf{P}_k= (P_k^1,\dots,P_k^n)^t$.
We can now formulate
\begin{lemma}\label{lem:constantbound}
  There exists  a constant $C>0$ such that for all $m \geq 0$, we have that 
  \[ \vnorm{f} \leq \frac{C}{\sqrt{(m+1)}} \vnorm{\mathcal{K}_m f}. \]
In particular, $\mathcal K$ has a bounded inverse on its image : if $g\in {\mathcal R}_{*,1}(M,\delta)\cap Im \mathcal K$, then ${\mathcal K}^{-1}(g)\in {\mathcal B}_{M,\delta}$ and
$$
\|{\mathcal K}^{-1}(g)\|\leq C \|g\|.
$$
\end{lemma}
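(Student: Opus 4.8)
The plan is to exploit that $\mathcal{K}$ preserves the degree of homogeneity in $z$, so that it suffices to establish the estimate for each restriction $\mathcal{K}_m$ separately and then reassemble. Fix $m$ and write $f=\sum_{|\alpha|=m} f_\alpha z^\alpha$ with coefficient vectors $f_\alpha\in\Cn$. Setting $P:=\mathcal{K}_m f$ and writing $P=(P_1,\dots,P_d)$ with $P_k=\bar z^t J_k f=\sum_{j} (J_k f)_j \bar z_j$, the coefficient polynomial vector is $\mathbf{P}_k=J_k f$. Invoking the norm identity \eqref{e:normslinear} coordinate by coordinate, I get
\[ \vnorm{\mathcal{K}_m f}^2 = \sum_{k=1}^d \vnorm{P_k}^2 = (m+1)\sum_{k=1}^d \vnorm{J_k f}^2, \]
where the $J_k f$ are again measured in the Fischer norm on $(\mathcal{H}_{n,m})^n$. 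The crucial observation is that the factor $(m+1)$ produced here is exactly what will supply the gain $1/\sqrt{m+1}$ in the statement; everything else reduces to an $m$-independent estimate.

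Next I would pass to a purely finite-dimensional, coefficient-wise inequality. Expanding, $\sum_k \vnorm{J_k f}^2 = \sum_{|\alpha|=m} \frac{\alpha!}{m!}\sum_k |J_k f_\alpha|^2$ and $\vnorm{f}^2 = \sum_{|\alpha|=m}\frac{\alpha!}{m!}|f_\alpha|^2$, where $|\cdot|$ is the Hermitian norm on $\Cn$. For a fixed vector $v\in\Cn$ one has, using that each $J_k$ is Hermitian,
\[ \sum_{k=1}^d |J_k v|^2 = \sum_{k=1}^d \inp{J_k^2 v}{v} = \inp{A v}{v}, \qquad A:=\sum_{k=1}^d J_k^2. \]
The matrix $A$ is a sum of positive semidefinite Hermitian matrices, hence positive semidefinite, and $\inp{Av}{v}=0$ forces $J_k v=0$ for every $k$, i.e. $v\in\bigcap_k \ker J_k$. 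By the nondegeneracy hypothesis \eqref{e:nondegenerate} this intersection is trivial, so $A$ is in fact positive definite. Let $c:=\lambda_{\min}(A)>0$; this constant depends only on the $J_k$ and not on $m$ or $f$, so $\inp{Av}{v}\ge c|v|^2$ holds uniformly.

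Combining the two steps gives $\sum_k \vnorm{J_k f}^2 \ge c\,\vnorm{f}^2$ and therefore $\vnorm{\mathcal{K}_m f}^2 \ge c\,(m+1)\,\vnorm{f}^2$, which is exactly the asserted bound with $C:=c^{-1/2}$ (and incidentally reproves injectivity of $\mathcal{K}$). For the second statement I would simply run this degreewise: if $g=\sum_m g_m$ lies in the image, put $f_m:=\mathcal{K}_m^{-1} g_m$, well defined by injectivity, so that $\vnorm{f_m}\le \tfrac{C}{\sqrt{m+1}}\vnorm{g_m}\le C M\delta^m$; this already shows $\mathcal{K}^{-1}(g)$ lies in the Banach scale with the same geometric rate $\delta$, and summing squares yields $\vnorm{\mathcal{K}^{-1}(g)}^2=\sum_m \vnorm{f_m}^2 \le C^2\sum_m \tfrac{1}{m+1}\vnorm{g_m}^2\le C^2\vnorm{g}^2$, i.e. $\vnorm{\mathcal{K}^{-1}(g)}\le C\vnorm{g}$. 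The only genuinely non-formal ingredient — and hence the point deserving care — is the passage from the nondegeneracy of $Q$ to the strict positivity of $A=\sum_k J_k^2$ with a constant independent of $m$; the rest is bookkeeping with the Fischer norm and the identity \eqref{e:normslinear}.
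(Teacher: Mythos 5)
Your proof is correct, and while it shares the paper's overall strategy --- reduce the estimate, via the Fischer-norm identity \eqref{e:normslinear} (the source of the factor $\sqrt{m+1}$ in both arguments), to an $m$-independent piece of linear algebra on coefficient vectors, powered by the nondegeneracy hypothesis $\bigcap_k \ker J_k = \{0\}$ --- it implements that linear algebra differently. The paper stacks the $J_k$ into the matrix $J$ of \eqref{e:defJ}, uses nondegeneracy (i.e.\ full rank of $J$) to extract an invertible $n\times n$ submatrix $\tilde J$, writes $f = (\tilde J)^{-1}\tilde P$ where $\tilde P$ collects the selected components $P^{j_\ell}_{k(j_\ell)}$, and takes $C$ to be essentially the operator norm of $(\tilde J)^{-1}$, discarding all rows not selected. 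You instead keep every component and bound $\sum_k |J_k v|^2 = \inp{Av}{v}$ from below by $\lambda_{\min}(A)\,|v|^2$, where $A = \sum_k J_k^2$ is positive definite precisely because of the same nondegeneracy condition. Your variant buys a canonical, choice-free constant $C = \lambda_{\min}\bigl(\sum_k J_k^2\bigr)^{-1/2}$, which is at least as good as any submatrix constant (since $A \succeq \tilde J^* \tilde J$), and it reproves injectivity of $\mathcal{K}_m$ in passing; the paper's variant is a more hands-on rank argument. You also spell out the ``in particular'' clause --- inverting degree by degree, noting that the geometric rate $\delta$ is preserved, and summing squares over the mutually orthogonal degrees --- which the paper's proof leaves implicit. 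One caveat applies equally to both arguments: each takes \eqref{e:normslinear} at face value, and the side of that identity on which the factor $(m+1)$ sits depends on the normalization fixed in \eqref{e:deffischer2}; since you use the identity exactly as the paper does, your proof stands or falls with the paper's on this point, so the comparison is unaffected.
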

\begin{proof}We consider the $n\times (nd)$-matrix  $J$ defined by 
  \begin{equation}
    \label{e:defJ} J = \begin{pmatrix} J_1 \\ \vdots \\J_d
    \end{pmatrix} .
  \end{equation}
  Since $\inp{\cdot}{\cdot}$ is nondegenerate, we can choose an invertible
   $n\times n$-submatrix $\tilde J$ from $J$, composed of the rows in the spots $(j_1,\dots, j_n)$; let 
  $k(j_\ell)$ denote which $J_k$ the row $j_\ell$ belongs to. Then, if $\mathcal{K}_m f = P$, we have for every $k = 1, \dots ,d$ that 
  $\bar z^t J_k f = \bar z^t \mathbf{P}_k$. Hence, by complexification we see that $J_k f = \mathbf{P}_k$.
  
  Let $\tilde P= (P^{j_1}_{k(j_1)} , \dots P^{j_n}_{k(j_n)})^t$. Then $\tilde J f = \tilde P$, and we can write 
  $f = (\tilde J)^{-1} \tilde P$. Hence, 
  \[ \vnorm{f}^2 \leq C \sum_{\ell=1}^n \vnorm{P_{k(j_\ell)}^{j_\ell}}^2 \leq \frac{C}{m+1} \vnorm{P}^2, \]
  by the observation in \eqref{e:normslinear}.
\end{proof}

In order to find an explicit complementary space to $\image \mathcal{K}_m$, we will use the Fischer inner product 
to compute its adjoint $\mathcal{K}_m^*$. We first note, that since the components of $\mathcal{R}_{m,1}$ are 
orthogonal to one another, if we write $\mathcal{K}_m = (\mathcal{K}_m^1,\dots ,\mathcal{K}_m^d)$, then 
$\mathcal{K}_m^* = (\mathcal{K}_m^1)^*+ \dots +(\mathcal{K}_m^d)^*$. The adjoints of  the maps 
$\mathcal{K}_m^k$, $k=1,\dots,d$   are computed via
\begin{equation}
  \label{e:adjoint1} \begin{aligned}
    \inp{\mathcal{K}_m^k f}{P_k} & = \inp{ {\bar z}^t J_k f}{ \sum_j P_k^j {\bar z_j}}\\
    & = \inp{\sum_{p,q = 1}^n (J_k)^p_q \bar z_p f^q }{\sum_j P_k^j {\bar z_j}} \\
    & = \frac{1}{m+1} \sum_{p,q =1 }^n (J_k)^p_q \inp{ f^q }{ P_k^p } \\
    & = \frac{1}{m+1} \sum_{p,q=1}^n \inp{f^q}{ \overline{(J_k)^p_q} P_k^p}
  \end{aligned} 
\end{equation}
to be given by 
\begin{equation}
  \label{e:adjoint2} 
  (m+1) ((\mathcal{K}_m^k)^* P_k)^q = \sum_{p=1}^n  (J_k)_p^q P^p_k = \sum_{p=1}^n (J_k)_p^q \dop{\bar z_p} P_k,
\end{equation}
or in more compact notation,
\begin{equation}
  \label{e:adjoint3}  (m+1) (\mathcal{K}_m^k)^* P_k =  \left(J_k  \dop{\bar z}\right) P_k.
\end{equation}

We now define the subspace $\mathcal{N}^1_{m,1}$ to consist of the elements of the kernel of $\mathcal{K}_m^*$, i.e.
  \begin{equation}\label{e:defNm1}
    \mathcal{N}^1_{m,1} := \left\{ P=(P_1, \dots P_d)^t\in\mathcal{R}_{m,1} \colon
  \sum_{k=1}^d  \left(J_k \dop{\bar z}\right) P_k = 
  \sum_{k=1}^d  J_k  \mathbf{P}_k = 0 \right\}.
  \end{equation}

\begin{proposition}
There exists a holomorphic transformation $z=z^*+f_{\geq 4}(z,w)$, $w=w^*$ such that, the new equation of the manifold satisfies
$$
\Phi_{p,1}\in {\mathcal N}_{p,1},\quad p\geq 4.
$$
\end{proposition}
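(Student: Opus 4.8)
The plan is to solve \eqref{equ-p1} for a convergent $f = f_{\geq 4}$ while simultaneously prescribing that the resulting $(p,1)$-terms land in the normal form space $\mathcal{N}_{p,1} = \ker \mathcal{K}_p^*$. First I would rewrite \eqref{equ-p1} in the equivalent form
\[ \Phi_{*,1} = \mathcal{K}(f) + \tilde \Phi_{*,1}(z + f(z,u), \bar z, u), \]
using $Q(f,\bar z) = \mathcal{K}(f)$. Since $\mathcal{R}_{m,1} = \image \mathcal{K}_m \oplus^{\perp} \ker \mathcal{K}_m^*$, let $\pi$ denote the orthogonal projection onto $\image \mathcal{K}$ and $1-\pi$ the projection onto $\ker \mathcal{K}^*$. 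The normalization requirement $\Phi_{*,1} \in \mathcal{N}_{*,1}$ is equivalent to $\pi \Phi_{*,1} = 0$, which, applied to the displayed identity and using $\pi \mathcal{K}(f) = \mathcal{K}(f)$, yields the fixed-point equation
\[ \mathcal{K}(f) = -\pi\, \tilde \Phi_{*,1}(z + f, \bar z, u), \qquad \text{i.e.} \qquad f = \Psi(f) := -\mathcal{K}^{-1} \pi\, \tilde \Phi_{*,1}(z + f, \bar z, u), \]
where $\mathcal{K}^{-1}$ is the bounded inverse on $\image \mathcal{K}$ furnished by \rl{lem:constantbound} (note $\pi\,\tilde\Phi_{*,1}(\cdots) \in \image \mathcal{K}$ by construction, so $\mathcal{K}^{-1}$ applies). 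Once a solution $f$ is found, the normal form is read off as $\Phi_{*,1} = (1-\pi) \tilde \Phi_{*,1}(z + f, \bar z, u) \in \ker \mathcal{K}^* = \mathcal{N}_{*,1}$, as required.

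Next I would exploit the triangular (order-increasing) structure of $\Psi$. Because both $\tilde \Phi_{*,1}$ and the sought $f$ have $z$-degree $\geq 4$, substitution of $z+f$ into $\tilde\Phi_{*,1}$ strictly raises the $z$-degree in all terms that actually involve $f$: the homogeneous degree-$m$ part of $\Psi(f)$ depends on the components $f_j$ only for $j < m$, together with a contribution independent of $f$ coming from $\tilde\Phi_{*,1}(z,\bar z,u)$, while $\mathcal{K}^{-1}$ and $\pi$ preserve $z$-degree. Hence $f = \Psi(f)$ can be solved uniquely degree by degree in $z$, producing a unique formal solution $f = \sum_{p \geq 4} f_p$. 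The same degree count shows that this change of coordinates, being of $z$-order $\geq 4$ with $w = w^*$, cannot disturb the $(p,q)$-terms with $p \leq 3$ already placed in normal form in the preceding steps.

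Finally I would establish convergence of $f$. The natural framework is the Banach space $\mathcal{B}_{n,C,\delta}$ of \eqref{elem-banach} (together with its companion $\mathcal{R}_{*,1}(C,\delta)$): I would show that for a suitable choice of $C$ and $\delta > 0$, the map $\Psi$ sends a ball of $\mathcal{B}_{n,C,\delta}$ into itself and is a contraction there, so that the unique fixed point is holomorphic. The two ingredients are the uniform bound $\vnorm{\mathcal{K}_m^{-1}(g)} \leq \frac{C}{\sqrt{m+1}} \vnorm{g}$ from \rl{lem:constantbound}, which contributes no loss (indeed a slight gain) at each homogeneity, and the analyticity of the datum $\tilde\Phi_{*,1}$ together with standard Fischer-norm estimates for the composition $g \mapsto \tilde\Phi_{*,1}(z+g,\bar z,u)$. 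The main obstacle will be precisely this last analytic estimate: controlling the norm of the composed map when the unknown $f$ of order $\geq 4$ is substituted into the analytic perturbation, so as to keep $\Psi$ inside the Banach ball and force the contraction constant below $1$. It is exactly the order-$\geq 4$ gain in $z$ (reinforced by the boundedness of $\mathcal{K}^{-1}$) that makes this succeed by a direct contraction argument, without recourse to any big-denominator machinery, in contrast to the singular system handled in the previous subsections.
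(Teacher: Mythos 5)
Your proposal is correct and follows essentially the same route as the paper: you project the conjugacy equation \eqref{equ-p1} onto $\image\mathcal{K}$ (equivalently, impose $\Phi_{*,1}\in\ker\mathcal{K}^*$), arriving at exactly the paper's equation $-\mathcal{K}(f)=\pi_{*,1}\tilde\Phi_{*,1}(z+f,\bar z,u)$, and then solve it using the bounded invertibility of $\mathcal{K}$ on its image from \cref{lem:constantbound}. Your fixed-point/contraction argument in the Banach space $\mathcal{B}_{n,C,\delta}$ is simply an explicit implementation of the implicit function theorem invocation with which the paper concludes, so the two proofs coincide in substance.
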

\begin{proof}
Let $\pi_{*,1}$ be the orthogonal projection onto the range of ${\mathcal K}$. Then since we want $\Phi_{*,1}$ to belong the normal forms space ${\mathcal N}^1_{*,1}$, we have to solve
$$
-{\mathcal K}(f):=-Q(f(z,u),\bar z)=\pi_{*,1} \tilde \Phi_{*,1}(z+f(z,u),\bar z, u).
$$
According to \rl{lem:constantbound}, the latter has an analytic solution by the implicit function theorem and we are done.
\end{proof}

\section{Convergence of the formal normal form} 
\label{sec:preparing_the_conjugacy_equation}
We are now going to prove convergence of the formal 
normal form in \cref{sec:a_full_formal_normal_form_proof_of_cref} 
under the additional condition 
of \cref{thm:main3} on the 
formal normal form. The goal of this section is to 
show that one can, under this additional condition, 
replace the nonlinear terms in the conjugacy equations 
for the terms of order up to $(3,3)$, by another 
system which allows for the application of the 
big denominator theorem.

We are again going to consider two real-analytic Levi-nondegenerate
submanifolds of $\CN$, but we now need to use their {\em complex defining equations} 
$ w = \theta (z, \bar z, \bar w) $ and $ w = \tilde \theta (z, \bar z, \bar w)$, 
respectively, where $\theta$ and $\tilde \theta$ are
germs of analytic maps at the origin in $\Cn\times\Cn\times \Cd$ valued in $\Cd$; analogously to the real defining functions, 
we think about $\tilde \theta$ as the ``old'' and about $\theta$ as 
the ``new'' defining equation. 

When dealing with the 
complex defining function, we will usually write 
$\chi = \bar z$  and $\tau = \bar w$. Recall that a map $\theta \colon \C^{2n + d} \to \Cd $  
determines a real submanifold if and only if 
the reality relation
\begin{equation}
  \label{e:reality} \tau = \theta (z, \chi, \bar \theta (\chi,z,\tau))  
\end{equation}
holds. $\theta$ is obtained from a 
real defining equation $\imag w = \varphi (z, \bar z , \real w)$ by solving the 
equation 
\[ \frac{w - \bar w}{2i} = \varphi \left( z, \bar z, \frac{w + \bar w}{2}\right) \]
for $w$.

We will already at the outset prepare our conjugacy equation so 
that $(z,w)$ are {\em normal coordinates}
for these submanifolds, i.e. that $\theta (z, 0, \tau  ) = \theta (0, \chi, \tau ) = \tau$
and we assume that $\tilde \theta (z', 0, \tau'  ) =  \tilde \theta (0, \chi', \tau' ) = \tau'$. In terms of the  original
``real'' defining function this means $\varphi(z, 0 , s) = \tilde \varphi (z,0,s) = 0$ (and analogously for $\tilde \varphi$). 

If our real real defining function, as assumed
before, satisfies $\varphi(z,\bar z, s) = Q(z,\bar z) +  \Phi (z, \bar z ,s) $, we can  write 
\[ \theta (z, \chi, \tau ) =  \tau + 2 iQ(z,\chi)  + S (z,\chi,\tau).  \]

$S$ can be further decomposed as 
\[ S (z, \chi, \tau) = \sum_{j,k=1}^\infty S_{j,k} (\tau) z^j \chi^k. \]
Here we think of $S_{j,k}$ as a power series in $\tau$ taking values in 
the space of multilinear maps on $(\Cn)^{j+k}$ which are symmetric in 
their first $j$ and in their last $k$ variables separately, taking values in $\Cd$ (i.e.
polynomials in $z$ and $\chi$ homogeneous of degree $j$ in $z$ and of degree $k$ in $\chi$), 
and for any such map $L$, write $L z^j \chi^k $ for $L(\underbrace{z,\dots, z}_{j \text{times}}, \underbrace{\chi, \dots ,\chi}_{k \text{times}} )$. 

We note for future reference the following simple observations: 
\begin{equation}
\label{eqn:simplerelphiS1}
\begin{aligned}
S_{1,\ell} = 2 i \Phi_{1,\ell} , \quad S_{\ell, 1} = 2 i \Phi_{\ell, 1}, \qquad \ell \geq 1, \quad 
S_{2,2} = 2 i \left( \Phi_{2,2} + i \Phi_{1,1}' (Q + \Phi_{1,1}) \right) , \\
S_{2,3} = 2 i \left( \Phi_{2,3} + i \Phi_{1,2}' (Q + \Phi_{1,1}) + i \Phi_{1,1}' \Phi_{1,2} \right), \qquad 
S_{3,2} = 2 i \left( \Phi_{3,2} + i \Phi_{2,1}' (Q+ \Phi_{1,1} ) + i \Phi_{1,1}' \Phi_{2,1} \right)  .
\end{aligned} 
\end{equation}
and 
\begin{equation}
\label{eqn:simplerelphiS2}
\begin{aligned}
\Phi_{2,2} &= \frac{1}{2i} S_{2,2} - \frac{1}{4i} S_{1,1}'(2 i Q + S_{1,1}) \\
\Phi_{2,3} &= \frac{1}{2i} S_{2,3} - \frac{1}{4i} S_{1,1}' S_{1,2} - \frac{1}{4i} S_{1,2}'(2 i Q +  S_{1,1}) , \\
\Phi_{3,2} &= \frac{1}{2i} S_{3,2} - \frac{1}{4i} S_{1,1}' S_{2,1} - \frac{1}{4i} S_{2,1}'(2 i Q+ S_{1,1})  \\
\Phi_{3,3} &= \frac{1}{2i} S_{3,3} - \frac{1}{4i} S_{2,2}' (2 i Q + S_{1,1}) - \frac{1}{8i} S_{1,1}' (2 S_{2,2} + S_{1,1}' (2i Q + S_{1,1})) +  \\ &- \frac{1}{4i} S_{1,2}' S_{2,1} - \frac{1}{4i} S_{2,1}' S_{1,2} + \frac{1}{16i} S_{1,1}'' (2i Q + S_{1,1})^2 .
\end{aligned} 
\end{equation}
Furthermore, from the fact that $\theta (z, \chi, \bar \theta (\chi, z, w)) = w$, we obtain 
the following equations relating $S_{j,k}$ and their conjugates:
\begin{equation}
\label{eqn:realrelS1} 
S_{1 , \ell} (w)+ \bar S_{\ell, 1} (w)   = 0, \quad S_{2,2} - S_{1,1}'(2iQ - \bar S_{1,1})  + \bar S_{2,2}= 0, \quad S_{2,3} -  S_{1,2}' (2i Q - \bar S_{1,1}) +S_{1,1}' \bar S_{2,1} + \bar S_{3,2} =0 
\end{equation}

A map $H=(f,g)$ maps the manifold defined by $w = \theta (z,\bar z, \bar w)$
into the one defined by $w'= \tilde \theta (z',\bar z', \bar w')$  if and only if  the following equation 
is satisfied: 
\begin{equation}
\label{e:mapping1} g(z,\theta(z,\chi,\tau)) = \tilde \theta (f (z,\theta (z,\chi, \tau))
, \bar f (\chi ,\tau) , \bar g(\chi, \tau)).
\end{equation}
An equivalent equation is (after application of \eqref{e:reality})
\begin{equation}
\label{e:mapping2} g(z,w) = \tilde \theta (f (z,w)
, \bar f (\chi ,\bar \theta (\chi,z,w)) , \bar g(\chi, \bar \theta (\chi,z,w))).
\end{equation}
If we set $\chi = 0$ in \eqref{e:mapping2}, the assumed normality of the 
coordinates, i.e. the equation $\theta(z, 0 , w) = 0$, is equivalent
 $g(z,w) = \tilde \theta (f(z,w), \bar f(0,w) , \bar g(0,w)) $; in particular,
 for $w = \theta (z,\chi,\tau)$, we have the (also equivalent) condition
 \begin{equation}
  \label{e:mapping1a} g(z,\theta(z,\chi,\tau)) = \tilde \theta (f (z,\theta (z,\chi, \tau))
, \bar f (0,\theta (z,\chi, \tau)) , \bar g(0,\theta (z,\chi, \tau))). 
 \end{equation}
 On the other hand setting $z=0$, 
 observing $\theta (0, \chi,\tau) = \tau$,
 and using (the conjugate of) \eqref{e:mapping1} 
 we also have 
 \begin{equation}
 \bar g (\chi, \tau ) = \bar{\tilde\theta} (\bar f (\chi,\tau), f(0,\tau),g(0,\tau))
 \end{equation}
Combining this with \eqref{e:mapping1}, we obtain 
the following equivalent equation, which now guarantees the normality of $(z,w)$: 
\begin{equation}
\begin{aligned}
  \label{e:mapping3}
  \tilde \theta &(f (z,\theta (z,\chi, \tau))
, \bar f (0,\theta (z,\chi, \tau)) , \bar g(0,\theta (z,\chi, \tau))) \\& = \tilde \theta \left(f (z,\theta (z,\chi, \tau))
, \bar f (\chi ,\tau) , \bar{\tilde\theta} (\bar f (\chi,\tau), f(0,\tau),g(0,\tau))\right).
\end{aligned}
\end{equation}
Lastly, we can use one of the equations implicit in 
\eqref{e:mapping3} to eliminate $\imag g$ from it. 
This is easiest done using \eqref{e:phi00prep}, which (after extending to complex
$w$) becomes 
\begin{equation}
(\imag g) (0,w) = \tilde \varphi(f(0,w), \bar f (0,w), (\real g) (0,w)).\label{e:imaggeqn}
\end{equation}
Substituting this relation into \eqref{e:mapping3} 
eliminates the dependence on $\imag g$ completely 
from the equation, only $\real g$ appears now. 

We now substitute $f = z + f_{\geq 2} (z,w )$, where $f$ only contains terms of quasihomogeneity greater than $1$, and 
write 
\[ f_{\geq 2} (z,w) = \sum_{k\geq 0} f_k (w) z^k, \quad 
g(0,w) = w + g_0 (w); \]
we also write $\psi= \real g_0$ for brevity. Let us first disentangle 
the equation \eqref{e:imaggeqn}. In our current notation, this 
reads 
\begin{equation}
\label{e:imaggeqn2} (\imag g_0) (w) = \tilde \varphi (f_0 (w), \bar f_0 (w), w + \psi (w)).
\end{equation}
By virtue of the fact that $\tilde \varphi (z,0,s) = 0$, this exposes $\imag g_0$ as an nonlinear expression in $f_0$, $\bar f_0$ , and $\psi$. 

We can thus rewrite \eqref{e:mapping3} as 
\begin{equation}
\label{e:mapping3a}
\begin{aligned}
\tilde \theta &\left(z+ f_{\geq 2}
, \bar f_0\circ \theta  , \theta + \psi\circ \theta + i \tilde \varphi (f_0 \circ \theta, \bar f_0 \circ \theta, \theta + \psi \circ \theta) \right) \\& = 
\tilde \theta \left(z+ f_{\geq 2}, 
\chi + \bar f_{ \geq 2} , \bar{\tilde\theta} (\chi +  \bar f_{\geq 2}, f_0 ,\tau + \psi + i \tilde \varphi (f_0 (w), \bar f_0 (w), w + \psi (w)))\right),
\end{aligned}
\end{equation}
where we abbreviate $f_{\geq 2} = f_{\geq 2} (z, \theta (z,\chi, \tau))$ and $\bar f_{\geq 2} = \bar f_{\geq 2} (\chi,\tau)$.

We will now extract terms which are linear in the variables $f_{\geq 2}$, $\bar f_{\geq 2}$,  and $\psi$ from this equation. We rewrite:
\[ \begin{aligned}
\tilde \theta 
&\left(z+ f_{\geq 2}, \bar f_0\circ \theta  , \theta + \psi\circ \theta + i \tilde \varphi (f_0 \circ \theta, \bar f_0 \circ \theta, \theta + \psi \circ \theta) \right) \\& = 
\tau + 2 i Q (z,\chi) + S + \psi \circ \theta + 2 i Q (z, \bar f_0\circ \theta ) + \dots
 \end{aligned} \] 
 \[ \begin{aligned}
 \tilde \theta & \left(z+ f_{\geq 2}, 
, \chi + \bar f_{ \geq 2} , \bar{\tilde\theta} (\chi +  \bar f_{\geq 2}, f_0 ,\tau + \psi + i \tilde \varphi (f_0 (w), \bar f_0 (w), w + \psi (w)))\right) \\
& = \bar{\tilde\theta} (\chi +  \bar f_{\geq 2}, f_0 ,\tau + \psi + i \tilde \varphi (f_0 (w), \bar f_0 (w), w + \psi (w))) + 2 i Q (z+ f_{\geq 2}, 
, \chi + \bar f_{ \geq 2}) + \dots \\
& = \tau + 2 i Q(z,\chi) \psi - 2 i Q(f_0 , \chi ) + 2 i Q (z, \bar f_{\geq 2}) + 2 i 
Q(f_{\geq 2}, \chi)  + \dots,
 \end{aligned} \]
where we will elaborate on the 
terms which appear in the dots a bit below. 

 We can thus further express the conjugacy equation \eqref{e:mapping3a} in 
the following form: 
\begin{equation}
\label{e:mapping3b} \begin{aligned}
\psi\circ &\theta - \psi + 2 i  Q(z, \bar f_0 \circ \theta ) + 2 i Q (f_0 , \chi ) - 2 i Q (z, \bar f_{\geq 2}) - 2i Q (f_{\geq 2}, \chi)  \\ 
&= \tilde{\mathcal T} \left( z,\chi, \tau, f_0, \bar f_0, \psi , f_0 \circ \theta , 
\bar f_0 \circ \theta, \psi\circ \theta, f_{\geq 2}, \bar f_{\geq 2} \right) - S,
\end{aligned}
\end{equation}
where $\tilde{\mathcal T}$ has 
the property that in the further expansion to follow, 
it will only create ``nonlinear terms''. 

We now restrict \eqref{e:mapping3b}
to the space of space of power series which are homogeneous of degree up to at most $3$ in
$z$ and $\chi$. By replacing the compositions $\psi \circ \theta$, $\bar f_0 \circ \theta $, and  $f_j \circ \theta$, for $j\leq 3$, by their Taylor expansions, we get  
\[ \begin{aligned}
 \psi (\tau + 2iQ(z,\chi)+ S(z,\chi,\tau)) &= \sum_{k=0}^3 \psi^{(k)} (\tau) \left(2iQ(z,\chi)+ S(z,\chi,\tau) \right)^k, \quad \mod (z)^4 + (\chi)^4\\
 \bar f_0 (\tau + 2iQ(z,\chi)+ S(z,\chi,\tau)) &= \sum_{k=0}^3 \bar f_0^{(k)} (\tau) \left(2iQ(z,\chi)+ S(z,\chi,\tau) \right)^k,  \quad \mod (z)^4 + (\chi)^4\\
 f_j (\tau + 2iQ(z,\chi)+ S(z,\chi,\tau)) &= \sum_{k=0}^{3-j} \bar f_j^{(k)} (\tau) \left(2iQ(z,\chi)+ S(z,\chi,\tau) \right)^k,  \quad \mod (z)^4 + (\chi)^4
\end{aligned}   \]
the resulting equations, ordered by powers of $(z,\chi)$, writing $h=(f_0,\bar f_0, \psi)$, and saving space by setting
 $\varphi^{\leq j} = (\varphi, \varphi', \dots, \varphi^{(j)})$
 and 
 \[S^{< p, <q} = \left(S_{k,\ell} \colon k< p, \ell \leq q \text{ or } k\leq p, \ell<q   \right),\] become
\[ 
\begin{aligned}
&z \chi \qquad & -\psi' Q +  Q(z,\bar f_1 ) +  Q (f_1 ,\chi) &= 
\frac{S_{1,1}}{2i} + \tilde{\mathcal T}_{1,1} \left(h^{\leq 1}, f_1, \bar f_1 \right)\\
&z^2 \chi \qquad & - 2i Q( z, \bar f_0' Q) + Q(f_2, \chi) & = \frac{S_{2,1}}{2i} + \tilde{\mathcal T}_{2,1} \left(h^{\leq 1}, f_1^{\leq 1}, \bar f_1 , S_{1,1} \right)
\\
&z^3 \chi \qquad & Q(f_3, \chi) & = \frac{S_{3,1}}{2i} + \tilde{\mathcal T}_{3,1} \left(h^{\leq 1}, f_1^{\leq 1}, f_2,  \bar f_1 , S^{<3,<1} \right)
\\
&z \chi^2 \qquad &  2 iQ(f_0' Q, \chi) + Q(z, \bar f_2)  & = \frac{S_{1,2}}{2i} + \tilde{\mathcal T}_{1,2} \left(h^{\leq 1}, f_1, \bar f_1 , S_{1,1} \right)
\\
&z \chi^3 \qquad & Q(z, \bar f_3) & = \frac{S_{1,3}}{2i} + \tilde{\mathcal T}_{1,3} \left(h^{\leq 1}, f_1, f_2, \bar f_1 , S^{<1, <3}\right)\\
&z^2 \chi^2 \qquad & - i \psi'' Q^2 +2 i  Q (f_1' Q , \chi)  & = \frac{S_{2,2}}{2i} + \tilde{\mathcal T}_{2,2} \left(h^{\leq 2}, f_1^{\leq 1}, f_2, \bar f_1, \bar f_2 , S^{<2,<2} \right)\\
&z^2 \chi^3 \qquad &  -2 Q(f_0'' Q^2 , \chi)  & = \frac{S_{2,3}}{2i} + \tilde{\mathcal T}_{2,3} \left(h^{\leq 2}, f_1^{\leq 1}, f_2, \bar f_1 , \bar f_2 , S^{<2, <3} \right)  \\
&z^3 \chi^2 \qquad &  2i Q(f_2' Q , \chi) + 2 Q(z,\bar f_0'' Q^2) &= \frac{S_{3,2}}{2i} + \tilde{\mathcal T}_{3,2} \left(h^{\leq 2}, f_1^{\leq 2}, f_2, \bar f_1 , \bar f_2 , S^{<3,<2} \right) \\
&z^3 \chi^3 \qquad & \frac{2}{3} \psi'''Q^3 - 2 Q(f_1'' Q^2, \chi)  & = \frac{S_{3,3}}{2i } + \tilde{\mathcal T}_{3,3} \left(h^{\leq 3}, f_1^{\leq 2}, f_2^{\leq 1}, \bar f_1, \bar f_2 , S^{<3,<3} \right)
\end{aligned}
\]
The ``nonlinear terms'' $\tilde{\mathcal{T}}_{(p,q)}$ have the property that  the derivatives of highest order 
appearing in each line, if they appear in the nonlinear part, then 
their coefficient vanishes when evaluated at $\tau = 0$. (One can 
go through very similar arguments as in \cref{sec:initialquadric} 
to convince oneself of that fact).

This system has the problem that the equations for the $z^2 \chi $ and $z^3 \chi$ involve $f_1'$ 
and that the equation for $z^3 \chi^2$ inolves $f_1 ''$, 
which effectively turns the 
full system of 
equations {\em singular}: In order to see that, consider 
the last two lines of the preceding system, brought to the same order of 
differentiation: 
\[ \begin{aligned}
&z^3 \chi^2 \qquad &  2i Q(f_2'' Q^2 , \chi) + 2 Q(z,\bar f_0''' Q^3) &= \frac{S_{3,2}'Q}{2i} + \hat{\mathcal T}_{3,2} \left(h^{\leq 3}, f_1^{\leq 3}, f_2^{\leq 1}, \bar f_1^{\leq 1} , \bar f_2^\leq 1 , \hat S^{<3,<2} \right) \\
&z^3 \chi^3 \qquad & \frac{2}{3} \psi'''Q^3 - 2 Q(f_1'' Q^2, \chi)  & = \frac{S_{3,3}}{2i } + \tilde{\mathcal T}_{3,3} \left(h^{\leq 3}, f_1^{\leq 2}, f_2^{\leq 1}, \bar f_1, \bar f_2 , S^{<3,<3} \right)
\end{aligned}
\] 
and note that in the nonlinear terms, the order of differentiation of $f_1$ in the first 
line is $3$ in the nonlinear part while it is $2$ in the linear part on the second line. This 
behaviour has to be excluded.

 However, we have improved the system 
from \eqref{e:formalsys}, since the equations for $z \chi^2$ and 
for $z^2 \chi^3$ do not have this problem. We can thus use our crucial assumptions,
namely that
\begin{equation} \label{e:crucial}
\Phi_{1,2}' (Q + \Phi_{1,1}) + \Phi_{1,1}' \Phi_{1,2} = 0. \end{equation}
 Under this assumption, \eqref{eqn:realrelS1} implies that 
$S_{1,2} = -\bar S_{2,1}$, $S_{1,3} = -\bar S_{3,1}$, $S_{3,2} = -\bar S_{2,3}$, and we can 
replace the equations for these terms with their conjugate equations, therefore eliminating the derivatives of too high order.
Indeed, among the previous equations, consider each pair of equations of the form $L_{p,q}=\frac{S_{p,q}}{2i}+\tilde{\mathcal T}_{pq}$ and $(*) L_{q,p}=\frac{S_{q,p}}{2i}+\tilde{\mathcal T}_{qp}$. Assume that $\tilde{\mathcal T}_{qp}$ involves higher derivatives than $\tilde{\mathcal T}_{pq}$. Since $\bar S_{pq}=-S_{qp}$, we have 
$$
\tilde{\mathcal T}_{qp}= L_{q,p}-\frac{S_{q,p}}{2i}=L_{q,p}+\frac{\bar S_{p,q}}{2i}=L_{q,p}-\bar L_{p,q}+\overline{\tilde{\mathcal T}}_{pq}.
$$
Hence, we can replace equation (*) by $\bar L_{p,q}=\frac{S_{q,p}}{2i}+\overline{\tilde{\mathcal T}}_{pq}$, lowering thereby the order of the differentials invloved. Therefore, we obtain a system of the form
\[ 
\begin{aligned}
&z \chi \qquad & -\psi' Q +  Q(z,\bar f_1 ) +  Q (f_1 ,\chi) &= 
\frac{S_{1,1}}{2i} + \tilde{\mathcal T}_{1,1} \left(h^{\leq 1}, f_1, \bar f_1 \right)\\
&z^2 \chi \qquad & - 2i Q( z, \bar f_0' Q) + Q(f_2, \chi) & = \frac{S_{2,1}}{2i} + \overline{\tilde{\mathcal T}}_{1,2} \left({\bar h}^{\leq 1}, \bar f_1, f_1 , \bar S_{1,1} \right)
\\
&z^3 \chi \qquad & Q(f_3, \chi) & = \frac{S_{3,1}}{2i} + \overline{\tilde{\mathcal T}}_{1,3}  \left(\bar h^{\leq 1},\bar  f_1, \bar f_2, f_1 , {\bar S}^{<1, <3}\right)
\\
&z \chi^2 \qquad &  2 iQ(f_0' Q, \chi) + Q(z, \bar f_2)  & = \frac{S_{1,2}}{2i} + \tilde{\mathcal T}_{1,2} \left(h^{\leq 1}, f_1, \bar f_1 , S_{1,1} \right)
\\
&z \chi^3 \qquad & Q(z, \bar f_3) & = \frac{S_{1,3}}{2i} + \tilde{\mathcal T}_{1,3} \left(h^{\leq 1}, f_1, f_2, \bar f_1 , S^{<1, <3}\right)\\
&z^2 \chi^2 \qquad & - i \psi'' Q^2 +2 i  Q (f_1' Q , \chi)  & = \frac{S_{2,2}}{2i} + \tilde{\mathcal T}_{2,2} \left(h^{\leq 2} , f_1^{\leq 1} , f_2, \bar f_1, \bar f_2 , S^{<2,<2} \right)\\
&z^2 \chi^3 \qquad & -2 Q(f_0'' Q^2 , \chi)  & = \frac{S_{2,3}}{2i} + \tilde{\mathcal T}_{2,3} \left(h^{\leq 2} , f_1^{\leq 1}, f_2, \bar f_1 , \bar f_2 , S^{<2, <3}  \right)  \\
&z^3 \chi^2 \qquad & -2 Q (z, \bar f_0'' Q^2) &= \frac{S_{3,2}}{2i} + \overline{\tilde{\mathcal T}}_{3,2} \left(\bar h^{\leq 2} , \bar f_1^{\leq 1}, \bar f_2,  f_1 , f_2 , \bar S^{<2, <3} \right) \\
&z^3 \chi^3 \qquad & \frac{2}{3} \psi'''Q^3 - 2 Q(f_1'' Q^2, \chi)  & = \frac{S_{3,3}}{2i } + \tilde{\mathcal T}_{3,3} \left(h^{\leq 3}, f_1^{\leq 2}, f_2^{\leq 1}, \bar f_1, \bar f_2 , S^{<3,<3} \right)
\end{aligned}
\]
The equations for the $(2,1)$, the $(3,1)$ and the $(3,2)$ term
now depend nonlinearly on the conjugate $\bar S_{p,q}$, which 
we replace by their conjugates (i.e. the unbarred terms)
using the rules \eqref{eqn:realrelS1}. After that, 
we can  use the implicit function theorem in order to eliminate the dependence of the 
$\tilde{\mathcal T}_{p,q}$ on the $S_{p,q}$, obtaining the equivalent 
system of 
equations 
\[ \begin{aligned}
&z \chi \qquad & -\psi' Q +  Q(z,\bar f_1 ) +  Q (f_1 ,\chi) &= 
\frac{S_{1,1}}{2i} + {\mathcal T}_{1,1} \left(h^{\leq 1}, f_1, \bar f_1 \right)\\
&z^2 \chi \qquad & - 2i Q( z, \bar f_0' Q) + Q(f_2, \chi) & = \frac{S_{2,1}}{2i} + {{\mathcal T}}_{1,3} \left(h^{\leq 1}, \bar f_1, f_1 \right)
\\
&z^3 \chi \qquad & Q(f_3, \chi) & = \frac{S_{3,1}}{2i} + {\mathcal T}_{3,1} \left(h^{\leq 1}, \bar f_1, \bar f_2,  f_1 \right)
\\
&z \chi^2 \qquad &  2 iQ(f_0' Q, \chi) + Q(z, \bar f_2)  & = \frac{S_{1,2}}{2i} + {\mathcal T}_{1,2} \left(h^{\leq 1}, f_1, \bar f_1 , \right)
\\
&z \chi^3 \qquad & Q(z, \bar f_3) & = \frac{S_{1,3}}{2i} + {\mathcal T}_{1,3} \left(h^{\leq 1} , f_1, f_2, \bar f_1 \right)\\
&z^2 \chi^2 \qquad & - i \psi'' Q^2 +2 i  Q (f_1' Q , \chi)  & = \frac{S_{2,2}}{2i} + {\mathcal T}_{2,2} \left(h^{\leq 2} , f_1^{\leq 1}, f_2, \bar f_1, \bar f_2 \right)\\
&z^2 \chi^3 \qquad & -2 Q(f_0'' Q^2 , \chi)  & = \frac{S_{2,3}}{2i} + {\mathcal T}_{2,3} \left(h^{\leq 1}, f_1^{\leq 1}, f_2, \bar f_1 , \bar f_2 \right)  \\
&z^3 \chi^2 \qquad & -2 Q(z,\bar f_0'' Q^2)  &= \frac{S_{3,2}}{2i} + {{\mathcal T}}_{2,3} \left(h^{\leq 2}, f_1^{\leq 1}, f_2, \bar f_1 , \bar f_2 \right) \\
&z^3 \chi^3 \qquad & \frac{2}{3} \psi'''Q^3 - 2 Q(f_1'' Q^2, \chi)  & = \frac{S_{3,3}}{2i } + {\mathcal T}_{3,3} \left(h^{\leq 3}, f_1^{\leq 2}, f_2^{\leq 1}, \bar f_1, \bar f_2 \right)
\end{aligned}
\]
We use this system and substitute it (and its 
appropriate derivatives)
 into \eqref{eqn:simplerelphiS2} in order to obtain equations for the $\Phi_{p,q}$, leading to  
\begin{equation}\label{e:completesys1}\begin{aligned}
&z \chi \qquad & -\psi' Q +  Q(z,\bar f_1 ) +  Q (f_1 ,\chi) &= 
\Phi_{1,1} + {\mathcal T}_{1,1} \left(h^{\leq 1}, f_1, \bar f_1 \right)\\
&z^2 \chi \qquad & - 2i Q( z, \bar f_0' Q) + Q(f_2, \chi) & = \Phi_{2,1} + \overline{{\mathcal T}}_{1,2} \left(\bar h^{\leq 1}, \bar f_1, f_1 \right)
\\
&z^3 \chi \qquad & Q(f_3, \chi) & = \Phi_{3,1} + \overline{\mathcal T}_{1,3} \left(\bar h^{\leq 1}, \bar f_1, \bar f_2,  f_1 \right)
\\
&z \chi^2 \qquad &  2 iQ(f_0' Q, \chi) + Q(z, \bar f_2)  & = \Phi_{1,2} + {\mathcal T}_{1,2} \left(h^{\leq 1}, f_1, \bar f_1 , \right)
\\
&z \chi^3 \qquad & Q(z, \bar f_3) & = \Phi_{1,3} + {\mathcal T}_{1,3} \left(h^{\leq 1}, f_1, f_2, \bar f_1 \right)\\
&z^2 \chi^2 \qquad & i \left( Q (f_1' Q, \chi) - 
Q(z,\bar f_1' Q) \right) & = \Phi_{2,2} + \tilde{\mathcal S}_{2,2} \left(h^{\leq 2}, f_1^{\leq 1}, \bar f_1^{\leq 1}, f_2, \bar f_2 \right)\\
&z^2 \chi^3 \qquad & - i Q (z, \bar f_2' Q)  & = \Phi_{2,3} + \tilde{\mathcal S}_{2,3} \left(h^{\leq 2}, f_1^{\leq 1}, f_2, \bar f_1 , \bar f_2 \right)  \\
&z^3 \chi^2 \qquad &  i Q(f_2' Q,\chi) &= \Phi_{3,2} + \overline{\tilde{\mathcal S}}_{3,2} \left(\bar h^{\leq 2},  \bar f_1^{\leq 1},\bar  f_2,  f_1 ,  f_2 \right) \\
&z^3 \chi^3 \qquad &  \frac{1}{6} \psi'''Q^3 - \frac{1}{2}\left( Q(f_1'' Q^2, \chi) +Q (z, \bar f_1'' Q^2) \right)   & = \Phi_{3,3} + \tilde{\mathcal S}_{3,3} \left(h^{\leq 3}, f_1^{\leq 2}, f_2^{\leq 1}, \bar f_1^{\leq 2}, \bar f_2^{\leq 1} \right)
\end{aligned}
\end{equation}     
This system is now ``well graded'' so that we can expose it as a system of PDEs which allows
for the application of the big denominator theorem. However, we first single out 
the equations for  $z^2 \chi$, $z^3 \chi$, $z\chi^2$, $z\chi^3$:
\begin{equation}\label{e:implicit1}
\begin{aligned}
&z^2 \chi \qquad & - 2i Q( z, \bar f_0' Q) + Q(f_2, \chi) & = \Phi_{2,1} + \overline{{\mathcal T}}_{1,2} \left(\bar h^{\leq 1}, \bar f_1, f_1 \right)
\\
&z^3 \chi \qquad & Q(f_3, \chi) & = \Phi_{3,1} + \overline{\mathcal T}_{1,3} \left(\bar h^{\leq 1}, \bar f_1, \bar f_2,  f_1 \right)\\
&z \chi^2 \qquad &  2 iQ(f_0' Q, \chi) + Q(z, \bar f_2)  & = \Phi_{1,2} + {\mathcal T}_{1,2} \left(h^{\leq 1}, f_1, \bar f_1 , \right)
\\
&z \chi^3 \qquad & Q(z, \bar f_3) & = \Phi_{1,3} + {\mathcal T}_{1,3} \left(h^{\leq 1}, f_1, f_2, \bar f_1 \right)\\
\end{aligned}
\end{equation}
Applying the adjoint operator $\mathcal{K}^*$ to 
the system \eqref{e:implicit1} and 
using the normalization conditions \eqref{e:normalizep1} for 
the $(1,p)$ and $(p,1)$-terms for $p=2,3$  transforms them 
into a system of implicit equations for  $f_2$ and $f_3$ in term of $h^{\leq 1}$, $f_1$ and their conjugates:
\begin{equation}\label{e:implicit2}
\begin{aligned}
&z^2 \chi \qquad &\mathcal{K}^* \mathcal{K} f_2 & =  \mathcal{K}^*( 2i Q( z, \bar f_0' Q)+ \overline{{\mathcal T}}_{1,2}) 
\\
&z^3 \chi \qquad & \mathcal{K}^* \mathcal{K} f_3 & = \mathcal{K}^* \overline{\mathcal T}_{1,3} \\
\end{aligned}
\end{equation}
By the fact that $\mathcal{K}^* \mathcal{K}$ is invertible (on the image of $\mathcal{K}^*$, where the right hand side lies), we 
can solve this equation for $f_2$ and $f_3$ and 
substitute the result into the ``remaining'' equations to
obtain the following system: 
\begin{equation}\label{e:completesys2}\begin{aligned}
&z \chi \qquad & -\psi' Q +  Q(z,\bar f_1 ) +  Q (f_1 ,\chi) &= 
\Phi_{1,1} + {\mathcal T}_{1,1} \left(h^{\leq 1}, f_1, \bar f_1 \right) \\ 
&z^2 \chi^2 \qquad & i \left( Q (f_1' Q, \chi) - 
Q(z,\bar f_1' Q) \right) & = \Phi_{2,2} + {\mathcal S}_{2,2} \left(h^{\leq 2}, f_1^{\leq 1}, \bar f_1^{\leq 1}, f_2, \bar f_2 \right)\\
&z^3 \chi^3 \qquad &  \frac{1}{6} \psi'''Q^3 - \frac{1}{2}\left( Q(f_1'' Q^2, \chi) +Q (z, \bar f_1'' Q^2) \right)   & = \Phi_{3,3} + {\mathcal S}_{3,3} \left(h^{\leq 3}, f_1^{\leq 2}, f_2^{\leq 1}, \bar f_1^{\leq 2}, \bar f_2^{\leq 1} \right) \\
&z^3 \chi^2 \qquad &  -2 Q (z, \bar f_0 '' Q^2) &= \Phi_{3,2} - i \Phi_{2,1}' Q + \overline{{\mathcal S}}_{3,2}\left(\bar h^{\leq 2},  \bar f_1^{\leq 1},\bar  f_2,  f_1 ,  f_2 \right) 
\end{aligned}
\end{equation}
     
While coupled in the nonlinear parts, the linear parts of the equations corresponding to the 
diagonal terms of type $(1,1)$, $(2,2)$, and $(3,3)$ on the one hand and of the off-diagonal terms of type  $(3,2)$ (we drop from now on the conjugate term $(2,3)$) 
on the other hand are {\em decoupled}, the diagonal terms only 
depending on $f_1$ and $\psi$, the off-diagonal terms on $f_0$ and their derivatives.

We thus obtain the linear operator $\mathcal{L}$ already introduced in 
\cref{sec:a_full_formal_normal_form_proof_of_cref},  if we rewrite everything in terms 
of our operators $\Delta$, $\mathcal{K}$ and $\bar{\mathcal{K}}$ (see section \ref{sec:the_normalization_conditions}), 
 \begin{equation}\label{e:completesys3a}\begin{aligned}
&z \chi \qquad & -\Delta \psi +  \bar{\mathcal{K}}  \bar f_1  +  \mathcal{K} f_1 &= 
 \Phi_{1,1} + {\mathcal T}_{1,1}\\
&z^2 \chi^2 \qquad & i \left( \mathcal{K} \Delta f_1 - \bar{\mathcal{K}}  \Delta \bar f_1 \right) & =   \Phi_{2,2} + {\mathcal S}_{2,2}\\
&z^3 \chi^3 \qquad &  \frac{1}{6} \Delta^3 \psi - \frac{1}{2}\left( \mathcal{K} \Delta^2 f_1 + \bar{\mathcal{K}} \Delta^2 \bar f_1 \right)   & = \Phi_{3,3} + {\mathcal S}_{3,3} \\
\end{aligned}
\end{equation}     
The equation determining $f_0$ can be rewritten as 
\begin{equation}\label{e:completesys3b}\begin{aligned}
  -2 \bar{\mathcal{K}} \Delta^2 \bar f_0 &= \Phi_{3,2} - i \Delta \Phi_{2,1} + \overline{{\mathcal S}}_{3,2}
\end{aligned} \end{equation}
Let us stress that even though the linear terms here are the same 
as in \cref{sec:a_full_formal_normal_form_proof_of_cref}, the nonlinear terms are 
not the same as we had in that section, and an elimination of the derivatives of ``bad 
order'' like we did here is only possible under some restriction. 

However, with this in mind, we can completely proceed as in the proof of \cref{thm:main2}: 
we first project the equations on the normal form space $\mathcal{N}^{\rm off} \times \mathcal{N}^d$, and
obtain an equation of the form
\begin{equation}
\label{e:complete4}
\begin{aligned}
-2 \bar{\mathcal{K}} \Delta^2 \bar f_0 &= \pi_0 \overline{{\mathcal S}}_{3,2} \\
 -\Delta \psi +  \bar{\mathcal{K}}  \bar f_1  +  \mathcal{K} f_1 &= 
 \pi_1 {\mathcal T}_{1,1}\\
 i \left( \mathcal{K}  f_1 - \bar{\mathcal{K}}  \bar f_1 \right) & =  \pi_2 {\mathcal S}_{2,2}\\
 \frac{1}{6} \Delta^3 \psi - \frac{1}{2}\left( \mathcal{K} \Delta^2 f_1 + \bar{\mathcal{K}} \Delta^2 \bar f_1 \right)   & = \pi_3 {\mathcal S}_{3,3}
\end{aligned}
\end{equation}
We now ``homogenize'' the degree of differentials of these 
equations again, obtaining a system of the form 
\begin{equation}
\label{e:complete5}
\begin{aligned}
-2 \bar{\mathcal{K}} \Delta^3 \bar f_0 &=  \mathcal{F}_{3,2} \left( h^{\leq 3}, f_1^{\leq 2}, \bar f_1^{\leq 2} \right) \\
 -\Delta^3 \psi +  \bar{\mathcal{K}} \Delta^2  \bar f_1  +  \mathcal{K} \Delta^2 f_1 &= 
 \mathcal{F}_{1,1}\left( h^{\leq 3}, f_1^{\leq 2}, \bar f_1^{\leq 2} \right)\\
 i \left( \mathcal{K} \Delta^2 f_1 - \bar{\mathcal{K}} \Delta^2 \bar f_1 \right) & =  { \mathcal{F}}_{2,2} \left( h^{\leq 3}, f_1^{\leq 2}, \bar f_1^{\leq 2} \right)\\
 \frac{1}{6} \Delta^3 \psi - \frac{1}{2}\left( \mathcal{K} \Delta^2 f_1 + \bar{\mathcal{K}} \Delta^2 \bar f_1 \right)   & = \pi_3 {\mathcal S}_{3,3} \left( h^{\leq 3}, f_1^{\leq 2}, \bar f_1^{\leq 2} \right)
\end{aligned}
\end{equation}
Next, we substitute  $f_0$, $\real g_0$, and $f_1$ 
with $\tilde f_0 =f_0 - j^3 f_0$, $\real \tilde \psi = \psi - j^3 \psi$, and $\tilde f_1 = f_1 - j^2 f_1$ and obtain
\begin{equation}
\label{e:complete5b}
\begin{aligned}
-2 \bar{\mathcal{K}} \Delta^3 \bar{\tilde f}_0 &=  \tilde{\mathcal{F}}_{3,2} \left( \tilde h^{\leq 3}, \tilde f_1^{\leq 2}, \bar{\tilde{ f}}_1^{\leq 2} \right) \\
 -\Delta^3 \tilde \psi +  \bar{\mathcal{K}} \Delta^2  \bar{\tilde{ f_1}}  +  \mathcal{K} \Delta^2 \tilde f_1 &= 
 \tilde{\mathcal{F}}_{1,1} \left( \tilde h^{\leq 3}, \tilde f_1^{\leq 2}, \bar{\tilde{ f}}_1^{\leq 2} \right)\\
 i \left( \mathcal{K} \Delta^2 \tilde  f_1 - \bar{\mathcal{K}} \Delta^2 \bar{\tilde{ f_1}} \right) & =  \tilde{ \mathcal{F}}_{2,2} \left( \tilde h^{\leq 3}, \tilde f_1^{\leq 2}, \bar{\tilde{ f}}_1^{\leq 2} \right)\\
 \frac{1}{6} \Delta^3 \psi - \frac{1}{2}\left( \mathcal{K} \Delta^2 \tilde f_1 + \bar{\mathcal{K}} \Delta^2 \bar{\tilde{ f_1}} \right)   & = \tilde{ \mathcal{F}}_{3,3} \left( \tilde h^{\leq 3}, \tilde f_1^{\leq 2}, \bar{\tilde{ f}}_1^{\leq 2} \right).\end{aligned}
\end{equation}

 We can now apply the Big Denominator theorem \ref{main-thm-BD}
 to this system, just as we did in the proof of \cref{thm:main2}. The setup is the same, with $\real (\tilde{g}_0)$ now replaced
 by $(\psi, f_0)$, and the 
details are completely analogous to the details carried out in the 
proof of \cref{thm:main2} and therefore left to the reader.

\section{On the Chern-Moser normal form}\label{sec:chernmoser}
As we have already pointed out above, our normal form necessarily cannot agree with the 
normal form of Chern-Moser in the case $d=1$ (which we assume from now on). 
The reason is that we do not 
have  a choice of which normal form space to use for the diagonal terms-the operator
associated to all diagonal terms is injective, and we need to use its full adjoint. In the 
Chern-Moser case, the equation for the $(1,1)$-term, (with our notations from above)
$$ 
\Phi_{1,1} = \Delta \psi - {\mathcal K} f_1  - \bar{\mathcal K} \bar f_1 + \dots,
$$
is rather special, because {\em the operator $ f_1 \mapsto \real{\mathcal{K} f_1}$ is 
surjective}. (One can check that the weaker condition $\image \Delta \subset \image \real 
\mathcal{K}$ happens if and only if $d=1$).

 This means that if we look at the
 normal form condition for the $(1,p)$-terms, which 
just becomes $\Phi_{1,p} = 0$ (because $\mathcal{K}$ is surjective, $\mathcal{K}^*$ is
injective, and hence $\Phi_{1,p} = 0$ if and only if $\mathcal{K}^* \Phi_{1,p} = 0$), we 
can naturally also use it for the $(1,1)$-term and just request that 
$\Phi_{1,1} = 0$. A tricky point is 
that even though $\real \mathcal{K}$ is surjective (as  a map
on $\fpstwo{\mathcal{H}_1}{u})$), it is not injective.   By 
considering the polar decomposition $z + f_1 (z,u) = U(u)(I + R(u))  z$ with 
$U$ unitary with respect to $Q$, i.e. $Q(U(u)z,\overline{U}(u)\bar z) = Q(z, \bar z)$, the 
equation for the $(1,1)$-term becomes {\em an implicit equation for $R$} in terms of all
the other variables, because
\[ \begin{aligned} Q(z + f_1(z,u),\bar z + \bar f_1 (z,u)) &= Q(U(u)(I + R(u))  z, \overline{U}(u)(I + R(u))  \bar z) \\
&= Q(z, \bar z ) 
+ 2 \real Q (R(u) z, \bar z) + Q(R(u)z, R(u) \bar z).
\end{aligned} \]. We can then use the implicit function 
theorem to solve the $(1,1)$, $(2,1)$, and $(3,1)$-equations under the 
requirement $\Phi_{1,1} = \Phi_{2,1} = \Phi_{3,1} = 0$ jointly for $R$, $f_2$, and
 $f_3$  in terms of $U$ and $\real g_0$ and substitute 
the result back in all the other equations as we did before. If we follow this
procedure and go through with the rest of the arguments 
following \eqref{e:implicit1} with the 
appropriate changes, we obtain the Chern-Moser normal form; one just has to 
note that $u {\rm tr} \varphi  = \Delta^* \varphi$.

\appendix

\section{Computations}\label{comput}

We recall that $\Phi_{p,0}=\Phi_{0,q}=0$. 
Therefore, $(Q+\Phi)^l$ contains no terms $(p,q)$ with $p<l$ or $q<l$.
As a consequence, we have
\begin{eqnarray}
\re{g}_{p,0} &= & 0\label{5p0}\\
\re{g}_{p,1} &= & iD_ug_{p-2}(u)\Phi_{2,1}+iD_ug_{p-1}(u)\Phi_{1,1}\label{5p1}\\
\re{g}_{2,2} &= & iD_ug_0(u)\Phi_{2,2}+  iD_ug_1(u)\Phi_{1,2}\nonumber\\
&& +\frac{1}{2}D_u^2g_0(u)(2\Phi_{1,1}Q+\Phi_{1,1}^2)\label{522}\\
\re{g}_{3,3} &= & iD_ug_0(u)\Phi_{3,3}+  iD_ug_1(u)\Phi_{2,3}+ iD_ug_2(u)\Phi_{1,3}\nonumber\\
&& +\frac{1}{2}D_u^2g_0(u)(2\Phi_{2,2}Q+\{\Phi^2\}_{3,3})+\frac{1}{2}D_u^2g_1(u)(2\Phi_{1,2}Q+\{\Phi^2\}_{2,3})\nonumber\\
&& -\frac{i}{6}D_u^3g_0(u)(3\Phi_{1,1}^2Q+\Phi_{1,1}^3+3\Phi_{1,1}Q^2)\label{533}\\
\re{g}_{3,2} &= & iD_ug_0(u)\Phi_{3,2}+  iD_ug_1(u)\Phi_{2,2}+ iD_ug_2(u)\Phi_{1,2}\nonumber\\
&& +\frac{1}{2}D_u^2g_0(u)(2\Phi_{2,1}Q+\{\Phi^2\}_{3,2})+\frac{1}{2}D_u^2g_1(u)(2\Phi_{1,1}Q+\{\Phi^2\}_{2,2})\label{532}\\
\re{g}_{3,1} &= & iD_ug_0(u)\Phi_{3,1}+  iD_ug_1(u)\Phi_{2,1}+ iD_ug_2(u)\Phi_{1,1}\label{531}
\end{eqnarray}
To obtain $\bar g_{\geq 3}(z,u-iQ)-\bar g_{\geq 3}(z,u-iQ-i\Phi)$, we just use the previous result and substitute $g_k$ in $\bar g_k$ and $i$ by $-i$.
We have, using essentially the same computations~:
\begin{eqnarray}
\re{Qf}_{p,1} &= & \re{Qf}_{p,0}= 0\label{6p1}\\
\re{Qf}_{2,2} &= & Q(iD_uf_0(u)\Phi_{2,1}+  iD_uf_1(u)\Phi_{1,1},\bar C\bar z)\label{622}\\
\re{Qf}_{3,3} &= & Q(iD_uf_0(u)\Phi_{3,2}+  iD_uf_1(u)\Phi_{2,2}+ iD_uf_2(u)\Phi_{1,2}, \bar C\bar z)\nonumber\\
&& +\frac{1}{2}Q(D_u^2f_0(u)(2\Phi_{2,1}Q+\{\Phi^2\}_{3,2})+\frac{1}{2}D_u^2f_1(u)(2\Phi_{1,1}Q+\{\Phi^2\}_{2,2}),  \bar C\bar z)\label{633}\\
\re{Qf}_{3,2} &= & Q(iD_uf_0(u)\Phi_{3,1}+  iD_uf_1(u)\Phi_{2,1}+ iD_uf_2(u)\Phi_{1,1},\bar C\bar z)\label{632}
\end{eqnarray}

We have 
$$
Q\left(f_{\geq 2},\bar f_{\geq 2}\right)=\sum_{k,l\geq 0}\frac{i^{k+l}(-1)^l}{k!l!}Q\left(D^k_uf_{\geq 2}(z,u)(Q+\Phi)^k , D^l_u\bar f_{\geq 2}(\bar z,u)(Q+\Phi)^l \right).
$$
The function $D^k_uf_{j'}(z,u)(Q+\Phi)^k$ (resp. $D^l_u\bar f_{j}(\bar z,u)(Q+\Phi)^l$) has only terms $(p,q)$ with $p\geq j'+k$ and $q\geq k$ (rep. $p\geq l$ and $q\geq l+j$). Hence, the function 
$Q(D^k_uf_{j'}(z,u)(Q+\Phi)^k,D^l_u\bar f_{j}(z,u)(Q+\Phi)^l)$ contains only terms $(p,q)$ with $p\geq j'+k+l$ and 
$q\geq j+k+l$.
we have
\begin{eqnarray}
Q\left(f_{\geq 2},\bar f_{\geq 2}\right)_{p,0} &= & Q(f_p,\bar f_0)\label{Qffp0}\\
Q\left(f_{\geq 2},\bar f_{\geq 2}\right)_{p,1} &= & Q(f_p,\bar f_1)+iQ(Df_{p-1}(Q+\Phi_{1,1}), \bar f_0)-iQ(f_{p-1}, D_u\bar f_0(Q+\Phi_{1,1}))\label{Qffp1}\\
Q\left(f_{\geq 2},\bar f_{\geq 2}\right)_{2,2} &= & Q(f_2,\bar f_2)+iQ(Df_1(Q+\Phi_{1,1}),\bar f_1)-iQ(f_1, D\bar f_1(Q+\Phi_{1,1}))\nonumber\\
&&-\frac{1}{2}\left(Q(f_0, D^2_u\bar f_0(Q+\Phi_{1,1})^2)+Q(D_u^2f_0(Q+\Phi_{1,1})^2, \bar f_0)\right)\nonumber\\
&&-Q\left(D_uf_0(u)(Q+\Phi_{1,1}),D_u\bar f_0(u)(Q+\Phi_{1,1})\right)\label{Qff22}\\
Q\left(f_{\geq 2},\bar f_{\geq 2}\right)_{3,3} &= & Q(f_3,\bar f_3)+
iQ(Df_0\Phi_{3,1}+Df_1\Phi_{2,1}+Df_2(Q+\Phi_{1,1}), \bar f_2)\nonumber\\
&&-iQ(f_2,D_u\bar f_0\Phi_{1,3}+D_u\bar f_1\Phi_{1,2}+D_u\bar f_2(Q+\Phi_{1,1}))\nonumber\\
&&+Q(i(D_uf_0\Phi_{3,2}+D_uf_2(Q+\Phi_{1,1})-\frac{1}{2}(D_u^2f_0(Q+\Phi_{1,1})\Phi_{2,1}+ D^2_uf_1(Q+\Phi_{1,1})^2)),\bar f_1)\nonumber\\
&&+Q(f_1, -i(D_u\bar f_0\Phi_{2,3}+D_u\bar f_2(Q+\Phi_{1,1})-\frac{1}{2}(D_u^2\bar f_0(Q+\Phi_{1,1})\Phi_{1,2}+ D^2\bar f_1(Q+\Phi_{1,1})^2)))\nonumber\\
&&Q(\frac{-i}{3}D^3_uf_0(Q+\Phi_{1,1})^3+\frac{-1}{2}(D^2_uf_0(Q+\Phi_{1,1})\Phi_{2,2}+D^2_uf_1(Q,\Phi_{1,1})\Phi_{1,2}), \bar f_0)\nonumber\\
&&Q(f_0, \frac{i}{3}D^3_u\bar f_0(Q+\Phi_{1,1})^3+\frac{-1}{2}(D^2_u\bar f_0(Q+\Phi_{1,1})\Phi_{2,2}+D^2_u\bar f_1(Q,\Phi_{1,1})\Phi_{2,1}))\nonumber\\
&&+ Q(-i(D_uf_0\Phi_{3,3}+D_uf_1\Phi_{2,3}+D_uf_2\Phi_{2,3}), \bar f_0)\nonumber\\
&&+ Q(f_0, i(D_u\bar f_0\Phi_{3,3}+D_u\bar f_1\Phi_{3,2}+D_u\bar f_2\Phi_{3,2}))\nonumber\\
&& \frac{-i}{2}Q\left(D_uf_0(u)(Q+\Phi_{1,1}), D^2_u\bar f_0(u)(Q+\Phi_{1,1})^2\right)\nonumber\\
&&+ \frac{i}{2}Q\left(D^2_uf_0(u)(Q+\Phi_{1,1})^2, D_u\bar f_0(u)(Q+\Phi_{1,1})\right)\nonumber\\
&& +Q(iD_uf_1(z,u)(Q+\Phi_{1,1}), D_u\bar f_1(\bar z,u)(Q+\Phi_{1,1}))\label{Qff33}\\
Q\left(f_{\geq 2},\bar f_{\geq 2}\right)_{3,2} &= & Q(f_3,\bar f_2)-i Q(f_2, D_u\bar f_1(Q+\Phi_{1,1})+D_u\bar f_0\Phi_{1,2})\nonumber\\
&& -iQ(f_1, D_u\bar f_0(Q+\Phi_{1,1})^2+D_u\bar f_1\Phi_{2,1})-iQ(f_0, D_u\bar f_1\Phi_{3,1}+D_u\bar f_0\Phi_{3,2})\nonumber\\
&&+Q\left(D_uf_1(z,u)(Q+\Phi_{1,1}), D_u\bar f_0(u)(Q+\Phi_{1,1})\right)-\frac{1}{2}Q(D^2_uf_1(Q+\Phi_{1,1})^2,\bar f_0)\label{Qff32}
\end{eqnarray}

We have 
\begin{equation}
\tilde \Phi_{\geq 3}\left(f,\bar f, \frac{1}{2}(g+\bar g)\right)- \tilde \Phi_{\geq 3}\left(Cz,\bar C\bar z, su\right)=\sum_{\substack{|\alpha|+|\beta|+|\gamma|=k\\k\geq 1}}\frac{1}{\alpha!\beta!\gamma!}\frac{\partial^k \tilde\Phi_{\geq 3}}{\partial z^{\alpha}\bar z^{\beta}u^{\gamma}} (Cz,\bar C\bar z, su)f_{\geq 2}^{\alpha}\bar f_{\geq 2}^{\beta} \left(\frac{1}{2}(g_{\geq 3}+\bar g_{\geq 3})\right)^{\gamma}\label{bigsum}
\end{equation}
where $\alpha,\beta\in \Bbb N^n$ and $\gamma\in \Bbb N^d$.
Hence, the $(p,q)$ term of $\tilde \Phi_{\geq 3}\left(f,\bar f, \frac{1}{2}(g+\bar g)\right)- \tilde \Phi_{\geq 3}\left(Cz,\bar C\bar z, su\right)$ is a sum of terms of the form
\begin{equation}\label{genericterm}
\left\{\frac{\partial^k \tilde\Phi_{\geq 3}}{\partial z^{\alpha}\partial\bar z^{\beta}\partial u^{\gamma}}(Cz,\bar C\bar z, su)\right\}_{p_1,q_1} \{f_{\geq 2}^{\alpha}\}_{p_2,q_2}\{\bar f_{\geq 2}^{\beta}\}_{p_3,q_3} \left\{\left(\frac{1}{2}(g_{\geq 3}+\bar g_{\geq 3})\right)^{\gamma}\right\}_{p_4,q_4}
\end{equation}
with $\sum_{i=1}^4p_i=p, \sum_{i=1}^4q_i=q$. 

Let us first compute $\{f_{\geq 2}^{\alpha}\}_{p_2,q_2}$ with $p_2,q_2\leq 3$. In the following computations, $f,g$ are considered as vector valued functions except when computing $f^{\alpha}, (g+\bar g)^{\gamma}$ where $f,g$ are considered as scalar functions and $\alpha,\gamma$ as an integers.

In the sums below, the terms appear with some positive multiplicity that we do not write since we are only interested in a lower bound of vanishing order of the terms. Fron these computations, we easily obtain $\{\bar f_{\geq 2}^{\alpha}\}_{p_2,q_2}$ in the following way~: replace $f_k$ by $\bar f_k$ in formula defining $\{f^{\alpha}\}_{p,q}$ in order to obtain $\{\bar f^{\alpha}\}_{q,p}$. Furthermore, we have
$$
\left\{\frac{\partial^k \tilde\Phi_{\geq 3}}{\partial z^{\alpha}\partial\bar z^{\beta}\partial u^{\gamma}}\right\}_{p_1,q_1} = \frac{\partial^k \tilde\Phi_{p_1+|\alpha|,q_1+|\beta|}}{\partial z^{\alpha}\bar z^{\beta}u^{\gamma}}
$$
Let us set as notation 
$$
Re(g):=\frac{g+\bar g}{2}= \frac{g(z,u+i(Q(z,\bar z)+\Phi(z,\bar z,u)))+\bar g(\bar z,u-i(Q(z,\bar z)+\Phi(z,\bar z,u))}{2}.
$$

\section{Big denominators theorem for non-linear systems of PDEs}
\label{sec-PDEs}

In this section we recall one of the main results of article \cite{Stolovitch:2014tk} about local analytic solvability of some non-linear systems of PDEs that have the ``big denominators property''.

\subsection{The problem}

Let $r\in\Bbb N^*$ and let ${\bf m} = (m_1,\ldots,m_r)\in \Bbb N^r$ be a fixed multiindex. Let us denote $\ank $ (resp. $\left(\ank \right)_{>d}$, $\widehat{\ank}$, $\left(\ank \right)^{(i)}$ ) the space of $k$-tuples of germs at $0\in \mathbb R^n$ (or $\Bbb C^n$) of analytic functions (resp. vanishing at order $d$ at the origin, formal power series maps, homogeneous polynomials of degree $i$) of $n$ variables. Let us set
$$
{\mathcal F}_{r,\bf m}^{\geq 0} := \left(\mathbb A_n\right)_{\geq m_{1}} \times \left(\mathbb A_n\right)_{\geq m_{2}}
\times \cdots \times \left(\mathbb A_n\right)_{\geq m_{r}}
$$
Given $F=(F_1,...,F_r)\in \mathcal F_{r,\bf m}^{\geq 0}$
and $x\in (\mathbb R^n,0)$, let us denote 
$$
j_x^{{\bf m}}F  :=\left(j^{m_{{1}}}_x F_1, \ \cdots , \ j^{m_{{r}}}_x F_r\right), \ \
J^{{\bf m}}\mathcal F_{r,\bf m}^{\geq 0} :=\left\{\left(x, \ j^{{\bf m}}_x F\right), \ \ x\in (\mathbb R^n,0), \ F\in  \mathcal F_{r,\bf m}^{\geq 0}\right\}.
$$
%
\begin{definition}
\label{def-diff-map}
A map $\mathcal T: \cF_{r,\bf m}^{\geq 0}\to \mathbb A_n^s$ is a {\bf differential analytic map of order ${\bf m}$} at the point
$0\in \mathbb A_n^k$ if there exists an analytic map germ $$W: \left(J^{{\bf m}}\mathcal F_{r,\bf m}^{\geq 0}, 0\right) \to \mathbb R^s$$
such that $\mathcal T(F)(x) = W(x, j^{\bf m}_xF)$ for any $x\in \mathbb R^n$ close to $0$ and any function germ $F\in \cF_{r,\bf m}^{\geq 0}$ such that $j^m_0F$ is close to $0$.
\end{definition}

Denote by $$v = \left(x_1,...,x_n, u_{j, \alpha }\right), \ \
  1\leq j\leq r, \ \alpha = (\alpha _1,..., \alpha _n)\in\Bbb N^n, \ \vert \alpha \vert \le m_j$$
   the local coordinates in $J^{\bf m}\mathbb A_n^r$, where $u_{j, \alpha }$ corresponds to the partial
   derivative $\partial^{|\alpha|} /\partial x_1^{\alpha _1}\cdots \partial x_n^{\alpha _n}$ of the $j$-th component
   of a vector function $F\in \mathbb A_n^r$. As usual, we have set $|\alpha|=\alpha_1+\cdots+\alpha_n$.
%

\begin{definition}\label{def-nonlin-bd}
Let $q$ be a nonnegative integer.
Let $\mathcal T: \cF_{r,\bf m}^{\geq 0}\to \mathbb A_n^s$ be a map. 
\begin{itemize} 
\item We shall say that it {\bf increases the order at the origin} (resp. strictly) by $q$ if  
for all $(F,G)\in (\cF_{r,\bf m}^{\geq 0})^2$ then 
$$
\text{ord}_0\left({\mathcal T}(F)-{\mathcal T}(G)\right)\geq \text{ord}_0(F-G)+q,
$$
(resp. $>$ instead of $\geq$).
\item Assume that ${\cT}$ is an analytic differential map of order $\bf m$
defined by a map germ $W:  \left(J^{{\bf m}}\mathcal F_{r,\bf m}^{\geq 0}, 0\right) \to \mathbb R^s$ as in Definition \ref{def-diff-map}. 
We shall say that it is {\bf regular}
 if, for any formal map $F=(F_1,\ldots, F_r)\in \widehat\cF_{r,\bf m}^{\geq 0}$, then
$$
\text{ord}_0\left(\frac{\partial W_i}{\partial u_{j,\alpha}}(x,\partial F)\right)\geq p_{j,|\alpha|},
$$
where 
\begin{equation}
p_{j,|\alpha|}=\max(0, |\alpha|+q+1-m_j)\label{p-reg}
\end{equation}
We have set $\partial F:=\left(\frac{\partial^{|\alpha|} F_i}{\partial x^{\alpha}},\;1\leq i\leq r,\,0\leq |\alpha|\leq m_i  \right)$.
\end{itemize}
\end{definition}

Let us consider linear maps~:
\begin{enumerate}
\label{triple}
\item 
$$
\mathcal S: \cF_{r,\bf m}^{\geq 0}\ \to \ \mathbb A_n^s,
$$
that increases the order by $q$ 
and is homogenous, i.e ${\mathcal S}\left(\cF_{r,\bf m}^{(i)}\right)\subset (\mathbb A_n^s)^{(q+i)}$.
\item 
$$
\pi : \mathbb A_n^s \to Image \hskip .05cm (\mathcal S) \subset \mathbb A_n^s
$$
is a projection onto $ Image \hskip .05cm (\mathcal S)$.
\end{enumerate}

Let us consider a differential analytic map of order $\bf m$, $\mathcal T:\cF_{r,\bf m}^{\geq 0}\ \to \ \mathbb A_n^s$. 

We consider the equation
\begin{equation}
\label{eq-main-bd}
\mathcal S(F) = \pi \left( \mathcal T(F)\right)
\end{equation}
In \cite{Stolovitch:2014tk}, we gave a sufficient condition on the triple $\left(\mathcal S, \mathcal T, \pi \right)$ under which equation (\ref{eq-main-bd}) has a solution $F\in \cF_{r,\bf m}^{\geq 0}$; this condition is called the ``Big Denominators property" of the triple $(\mathcal S,\mathcal T,\pi )$ defined below.

\subsection{Big denominators. Main theorem}

Now we can define the big denominators property of the triple $(\mathcal S, \mathcal T, \pi )$ in equation (\ref{eq-main-bd}).

\begin{definition}
\label{def-big-denominators}
The triple of maps $(\mathcal S, \mathcal T, \pi )$ of form (\ref{triple})
has 
 {\bf big denominators property of order $\bf m$}
if there exists an nonnegative integer $q$ such that the following holds:

\begin{enumerate}

\item \ $\mathcal T$ is an regular analytic differential map of order $\bf m$ that strictly increases the order by $q$ 
 and $j^{q-1}_0\mathcal T(0)=0$, i.e.  $\mathcal T(F)(x) = W(x, j^{\bf m}_xF)$ for any $x\in \mathbb R^n$ close to $0$ and any function germ $F\in \cF_{r,\bf m}^{\geq 0}$ such that $j^m_0F$ is close to $0$ and $ord_0(W(x, 0))\geq q$ 
.

\item $\cS: \cF_{r,\bf m}^{\geq 0}\ \to \ \mathbb A_n^s$ is linear 
, increases the order by $q$ 
 and is homogenous, i.e. ${\mathcal S}\left(\cF_{r,\bf m}^{(i)}\right)\subset (\mathbb A_n^s)^{(q+i)}$.

\item the linear map $\pi : \mathbb A_n^s \to Image \hskip .05cm (\mathcal S) \subset \mathbb A_n^s$ is a projection.

\item \ the map $\mathcal S$ admits right-inverse $\mathcal S^{-1}: Image (S)\to \mathbb A_n^r$ such that
the composition $\mathcal S^{-1}\circ \pi $ satisfies:

there exists $C>0$ such that for any $G\in \mathbb A_n^s$ of order $> q$, one has for all $1\leq j\leq r$, and all integer $i$, 
\begin{equation}
\label{BD}
\left\|\left(\cS_j^{-1}\circ \pi (G)\right)^{(i+m_j)}\right\|\leq C\frac{\|G^{(i+q)}\|}{(i+m_j+q)\cdots (i+q+1)}.
\end{equation}
where ${\cal S}_i^{-1}$ denotes the $i$th component of ${\cal S}^{-1}$, $1\leq i\leq r$ 
.
\end{enumerate}
\end{definition}
\begin{remark}
Let $i\ge 0$ and let $F = (F_1,..., F_k)\in \left(\mathbb A_n^k\right)^{(i)}$. Let
$F_j = \sum F_{j, \alpha }x^\alpha $ where the sum is taken over all $j=1,...,k$ and all
multiindexes $\alpha = (\alpha _1,..., \alpha _n)$ such that $\vert \alpha \vert = \alpha _1+\cdots + \alpha _n = i$.
The norm $\vert \vert F \vert \vert$ used in \re{BD} is either 
\begin{itemize}
\item $$\vert \vert F_j \vert \vert = \sum _{\vert \alpha \vert = i}\vert F_{j,\alpha }\vert, \ \ \vert \vert F \vert \vert = max \left(\vert \vert F_1\vert \vert , \cdots ,  \vert \vert F_k\vert \vert \right).$$
\item or the modified Fisher-Belitskii norm
$$\vert \vert F_j \vert \vert^2 = \sum _{\vert \alpha \vert = i}\frac{\alpha!}{|\alpha|!}\vert F_{j,\alpha }\vert^2, \ \ \vert \vert F \vert \vert^2 = \vert \vert F_1\vert \vert^2+\cdots + \vert \vert F_k\vert \vert^2.$$

\end{itemize}
\end{remark}
\begin{remark}
In practice, for each $i$, there is a decomposition into direct sums $\cF_{r,\bf m}^{(i)}=L_i\oplus K_i$ with ${\cal S}_{|L_i}$ is a bijection onto its range. The chosen right inverse is then the one with zero component along $K_i$. For instance, the case of the modified Fisher-Belitskii norm, $K_i:=\ker {\cal S}_i^*$ is the natural one, where ${\cal S}_i^*$ denotes the adjoint of ${\cal S}_i$ w.r.t. the scalar product.
\end{remark}
\begin{theorem}\cite{Stolovitch:2014tk}[theorem 7]
\label{main-thm-BD}
Let us consider a system of analytic non-linear pde's such as equation (\ref{eq-main-bd})~:
\begin{equation}
\mathcal S(F) = \pi \left( W(x, j^{\bf m}_xF)\right).\label{bd-equ2solve}
\end{equation}
If the triple $(\mathcal S, \mathcal T,\pi )$  has big denominators property of order $\bf m$,
according to definition \ref{def-big-denominators}, then the equation has an analytic solution $F\in \cF_{r,\bf m}^{\geq 0}$ 
.
\end{theorem}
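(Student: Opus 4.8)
The plan is to first construct a formal power series solution of \eqref{eq-main-bd} grading by grading, and then to prove its convergence by a majorant argument in which the \emph{big denominators} of \eqref{BD} are used to absorb the loss of derivatives produced by the nonlinear operator $\mathcal{T}$.

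First I would produce the formal solution. Decompose every object into homogeneous components with respect to the natural grading of $\mathcal{F}^{\ge 0}_{r,\bf m}$, writing $F=\sum_{i\ge 0}F^{(i)}$, and recall from item 2 of Definition~\ref{def-big-denominators} that $\mathcal{S}$ is homogeneous, sending grading $i$ to actual degree $q+i$. Since $\mathcal{T}$ \emph{strictly} increases the order by $q$, comparing $F$ with its truncation $F^{<i}$ (keeping gradings $\le i-1$) gives $\ord_0\bigl(\mathcal{T}(F)-\mathcal{T}(F^{<i})\bigr)> i+q$, so the degree-$(q+i)$ component of $\pi(\mathcal{T}(F))$ depends only on $F^{(0)},\dots,F^{(i-1)}$. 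Consequently \eqref{eq-main-bd} splits into the recursion
\[ \mathcal{S}\bigl(F^{(i)}\bigr)=\bigl[\pi(\mathcal{T}(F))\bigr]^{(q+i)}, \]
whose right-hand side lies in $\image(\mathcal{S})$ by construction of $\pi$; applying the chosen right inverse yields $F^{(i)}=\mathcal{S}^{-1}\pi\bigl([\mathcal{T}(F)]^{(q+i)}\bigr)$ uniquely. The hypotheses $j^{q-1}_0\mathcal{T}(0)=0$ and $\ord_0 W(x,0)\ge q$ make the base of the recursion consistent, so we obtain a unique formal $F\in\widehat{\mathcal{F}}^{\ge 0}_{r,\bf m}$.

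Next I would set up the convergence estimate. Equip each space of homogeneous components with the modified Fischer norm and seek a geometric bound of the form $\vnorm{F^{(i)}}\le A\,\rho^{\,i}$. Writing $G=\mathcal{T}(F)$, the big denominators inequality \eqref{BD} controls the passage through $\mathcal{S}^{-1}\pi$:
\[ \vnorm{\bigl(\mathcal{S}_j^{-1}\pi G\bigr)^{(i+m_j)}}\le C\,\frac{\vnorm{G^{(i+q)}}}{(i+m_j+q)\cdots(i+q+1)}, \]
so that this step divides the degree-$(i+q)$ part of the nonlinear term by a product of $m_j$ consecutive integers of size $\sim i^{m_j}$. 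Since $W$ is analytic it admits a convergent majorant, and a monomial of $\mathcal{T}$ carrying a derivative $D^\alpha F_j$ with $|\alpha|\le m_j$ contributes, at the relevant degree, a differentiation cost of at most $\sim i^{m_j}$. The denominator in \eqref{BD} is tuned to cancel precisely this worst-case factor, while the \emph{regularity} of $\mathcal{T}$ (Definition~\ref{def-nonlin-bd}) guarantees that whenever the top derivatives $|\alpha|=m_j$ occur, the coefficient $\partial W_i/\partial u_{j,\alpha}$ vanishes to the order $p_{j,m_j}=q+1$ prescribed by \eqref{p-reg}; this simultaneously forces the strict order increase used above and supplies the extra margin needed for a strict gain.

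I would then close the argument by a fixed point scheme. Setting $\Psi(F)=\mathcal{S}^{-1}\pi\,\mathcal{T}(F)$, the estimates above show that on homogeneous components $\Psi$ gains the factor from \eqref{BD} while losing at most the matching derivative factor from the majorant of $W$, so that $\Psi$ maps a small ball of the Banach space of analytic germs with weighted norm into itself and is a contraction on a sufficiently small polydisc (equivalently, for the weight $\rho$ large enough). Comparing the formal coefficients $\vnorm{F^{(i)}}$ termwise with the solution of the associated scalar majorant equation (obtained either by iterating $\Psi$ or by the analytic implicit function theorem in a Banach scale) shows that they are dominated by the coefficients of a convergent series, whence $F\in\mathcal{F}^{\ge 0}_{r,\bf m}$ is genuinely analytic. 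The \textbf{main obstacle} is exactly this balancing act: $\mathcal{T}$ loses up to $m_j$ derivatives of $F_j$, and one must verify that the gain in \eqref{BD} together with the vanishing orders \eqref{p-reg} dominates this loss \emph{uniformly in $i$}, so that the majorant recursion has a positive radius of convergence rather than only a formal solution.
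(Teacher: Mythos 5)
Your proposal is correct and takes essentially the same approach as the intended proof: this paper does not actually reprove Theorem \ref{main-thm-BD} (it is imported from \cite{Stolovitch:2014tk}, with only a remark that the shift from $\cF_{r,\bf m}^{> 0}$ to $\cF_{r,\bf m}^{\geq 0}$ is harmless), but your formal grading-by-grading recursion $\mathcal S(F^{(i)})=\bigl[\pi\mathcal T(F^{<i})\bigr]^{(q+i)}$, justified by the strict order increase, is exactly the scheme the paper itself records in its Application subsection, and your convergence step --- Fischer-norm majorant estimates in which the big denominators of \eqref{BD} cancel the $\sim i^{m_j}$ loss caused by $m_j$-fold differentiation, with the regularity condition \eqref{p-reg} supplying the required vanishing of the top-order coefficients --- is precisely the mechanism of the cited proof. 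The quantitative majorant estimates are asserted rather than executed, but that is a matter of detail rather than of approach; no idea is missing and no step would fail.
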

\begin{remark}
The precise statement of \cite{Stolovitch:2014tk} holds for $F\in \cF_{r,\bf m}^{> 0}$ and where the order of $W(x,0)$ at the origin is greater than $q$. The shift by 1  (i.e $F\in \cF_{r,\bf m}^{\geq 0}$ and where the order of $W(x,0)$ at the origin is greater or equal to $q$) of the above statement, doesn't affect its proof.
\end{remark}

\subsection{Application}
In this section we shall devise the strictly increasing condition in more detail. We look for
a formal solution $F^{\geq 0}=\sum_{i\geq 0}F^{(i)}$ to \re{bd-equ2solve}. As above, $F^{(i)}$ stands for $(F_1^{(m_1+i)},\ldots, F_r^{(m_r+i)})$. We define 
$$
\mathcal S(F^{(i+1)}):=\left[\pi  W\left(x, j^{\bf m}_x\sum_{j\geq 0}^{i} F^{(j)}\right)\right]^{(i+q+1)}.
$$
Here $[G]^{(i)}$ denotes the homogenous part of degree $i$ of $G$ in the Taylor expansion at the origin. Therefore $F:=\sum_{i\geq} F^{(i)}$ is a solution of \re{bd-equ2solve} if 
\begin{equation}
\ord_0\left(W\left(x, j^{\bf m}_x\sum_{j\geq 0} F^{(j)}\right)-W\left(x, j^{\bf m}_x\sum_{j\geq 0}^{i} F^{(j)}\right)\right)>i+q+1.\label{strcond}
\end{equation}
Indeed, we would have 
\begin{eqnarray*}
\mathcal S\left(\sum_{i\geq 0} F^{(i)}\right)& =& \sum_{i\geq 0}\left[\pi  W\left(x, j^{\bf m}_x\sum_{j\geq 0}^{i} F^{(j)}\right)\right]^{(i+q+1)} \\
&=& \sum_{i\geq 0}\left[\pi  W\left(x, j^{\bf m}_x\sum_{j\geq 0} F^{(j)}\right)\right]^{(i+q+1)}= \pi  W\left(x, j^{\bf m}_xF\right)
\end{eqnarray*}
We emphasize that condition \re{strcond} just means that $W$ strictly increases the order by $q$ as defined in Definition \ref{def-nonlin-bd}. Let us look closer to that condition. Let us 
denote $F^{\leq i}:=\sum_{j\geq 0}^{i} F^{(j)}$ and $F^{> i}:=\sum_{j>i} F^{(j)}$. Let us Taylor expand $W(x, j^{\bf m}_xF)$ at $F^{\leq i}$. We thus have
\begin{eqnarray*}
W(x, j^{\bf m}_xF)-W(x, j^{\bf m}_xF^{\leq i})&= &\sum \frac{\partial W}{\partial u_{j,\alpha}}((x, j^{\bf m}_xF^{\leq i}))\frac{\partial^{|\alpha|} F_j^{> i}}{\partial x^{\alpha}}\\
&&+\frac{1}{2} \sum \frac{\partial W}{\partial u_{j,\alpha}\partial u_{j',\alpha'}}((x, j^{\bf m}_xF^{\leq i}))\frac{\partial^{|\alpha|} F_j^{> i}}{\partial x^{\alpha}}\frac{\partial^{|\alpha'|} F_{j'}^{> i}}{\partial x^{\alpha'}}+\cdots
\end{eqnarray*}
We recall that $\ord_0F_j^{> i}>m_j+i$ and when considering a coordinate $u_{j,\alpha}$, we have $|\alpha|\leq m_j$. Hence, we have
$$
\ord_0\frac{\partial^{|\alpha|} F_j^{> i}}{\partial x^{\alpha}} >  m_j+i-|\alpha|.
$$
In order that the first derivative part of this Taylor expansion satisfies \re{strcond}, it is sufficient that 
$$
\ord_0\frac{\partial W}{\partial u_{j,\alpha}}((x, j^{\bf m}_xF^{\leq i}))\geq |\alpha|-m_j+q+1.
$$ 
This is nothing but the {\it regularity condition} as defined in Definition  \re{def-nonlin-bd}. Let us consider the other terms in the Taylor expansion. We have, for instance, 
$$
\ord_0\frac{\partial^{|\alpha|} F_j^{> i}}{\partial x^{\alpha}}\frac{\partial^{|\alpha'|} F_{j'}^{> i}}{\partial x^{\alpha'}}\geq m_j+i+1-|\alpha|+m_{j'}+i+1-|\alpha'|
$$

If $i+1> q$, then not only the second but also any higher order derivative part of this Taylor expansion satisfies \re{strcond}.
\begin{corollary}\label{q=0}
If $q=0$ and if the system is regular, then it strictly increases the order by 0.
\end{corollary}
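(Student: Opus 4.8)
The plan is to prove this by reducing the full increment $\mathcal T(F)-\mathcal T(G)$ to the first–order term of a Taylor expansion of $W$, which formalizes the computation already carried out in the Application subsection and specializes it to $q=0$. The essential observation is that the regularity condition is calibrated exactly so that the coefficient in front of each jet variable $u_{j,\alpha}$ vanishes to the order $p_{j,|\alpha|}=\max(0,|\alpha|+1-m_j)$ needed to compensate the loss of order incurred by differentiating $|\alpha|$ times; once this first–order gain is secured, the hypothesis $q=0$ forces all higher–order Taylor terms to gain order automatically.

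Concretely, I would fix $F,G\in\cF_{r,\bf m}^{\geq 0}$, set $d:=\ord_0(F-G)$, and aim to show $\ord_0(\mathcal T(F)-\mathcal T(G))\ge d+1$, which is exactly strict increase by $q=0$ in the sense of Definition~\ref{def-nonlin-bd}. First I would record that, by definition of the shifted order, $\ord_0(F_j-G_j)\ge m_j+d$, so that differentiating loses at most $|\alpha|$ units:
\[
\ord_0\!\left(\frac{\partial^{|\alpha|}(F_j-G_j)}{\partial x^{\alpha}}\right)\ge m_j+d-|\alpha|,\qquad |\alpha|\le m_j.
\]
Then I would pass to the segment of formal jets $F_t:=G+t(F-G)$, $t\in[0,1]$, whose $\bf m$–jets stay in the domain of $W$, and write the increment $\mathcal T(F)-\mathcal T(G)=W(x,\partial F)-W(x,\partial G)$ as the $t$–integral of $\sum_{j,\alpha}\tfrac{\partial W}{\partial u_{j,\alpha}}(x,\partial F_t)\,\partial^{\alpha}(F_j-G_j)$, so that every contribution is a single first–order coefficient of $W$ times a first difference.

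The key step is to invoke regularity, which, crucially, holds at \emph{every} formal map and hence uniformly in $t$: with $q=0$ it gives $\ord_0\big(\tfrac{\partial W}{\partial u_{j,\alpha}}(x,\partial F_t)\big)\ge p_{j,|\alpha|}=\max(0,|\alpha|+1-m_j)$. Multiplying and splitting into the cases $|\alpha|\le m_j-1$ (where $p_{j,|\alpha|}=0$ but $m_j-|\alpha|\ge 1$) and $|\alpha|=m_j$ (where $p_{j,|\alpha|}=1$ compensates the single lost unit) shows each summand has order $\ge d+1$, and integration in $t$ can only raise the order. The one point requiring genuine care — and what I regard as the true content of the statement — is the borderline case $|\alpha|=m_j$: there the differentiated difference only has order $\ge d$, and it is precisely the vanishing order $p_{j,m_j}=1$ supplied by regularity that furnishes the missing unit, which is why the conclusion collapses without the regularity hypothesis. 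The remaining matters, namely the uniformity of the estimate along $F_t$ and the fact that integrating a $t$–family preserves a lower bound on the order, are routine.
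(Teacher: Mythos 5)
Your proof is correct, but it runs on a different engine than the paper's. The paper's own argument is the Application subsection of which the corollary is the conclusion: it Taylor--expands $W$ \emph{fully} at the single base point $F^{\leq i}$, uses regularity only at that one point and only to handle the first--order terms, and then disposes of every term of order $k\geq 2$ by a counting argument --- such a term contains at least two factors $\partial^{|\alpha|}F_j^{>i}/\partial x^{\alpha}$, each of order $\geq m_j+i+1-|\alpha|\geq i+1$, so its total order is $\geq 2(i+1)$, which exceeds the threshold $i+q+1$ of \re{strcond} exactly when $i+1>q$, i.e.\ always when $q=0$. You avoid higher--order terms altogether by writing the increment in Hadamard form, as the $t$-integral of $\sum_{j,\alpha}\tfrac{\partial W}{\partial u_{j,\alpha}}(x,\partial F_t)\,\partial^{\alpha}(F_j-G_j)$ along the segment $F_t=G+t(F-G)$, and you invoke regularity at \emph{every} $F_t$; this is legitimate precisely because Definition \ref{def-nonlin-bd} postulates the bound \re{p-reg} at all formal maps, a strength of the hypothesis that the paper's argument never uses. (Note that your opening sentence, about $q=0$ forcing the higher--order Taylor terms to gain order, describes the paper's mechanism, not yours: in your actual proof there are no higher--order terms to control.) Each route buys something. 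The paper's is purely algebraic --- no integration, no need for the jets $j^{\bf m}_xF_t$ to remain in a convex neighborhood inside the domain of $W$, works coefficient--by--coefficient for formal series --- and it is calibrated exactly to the pairs $(F,F^{\leq i})$ arising in the iterative scheme, whose difference has graded order $i+1\geq 1$. Yours is stronger: it also covers the borderline pairs with $\ord_0(F-G)=0$, where the paper's counting is empty (two top--order factors $|\alpha|=m_j$, $|\alpha'|=m_{j'}$ multiplied by an unconstrained second--derivative coefficient of $W$ need not gain any order), and your case analysis goes through verbatim for arbitrary $q$, not only $q=0$. The two points you dismiss as routine are indeed routine but deserve a sentence each in a written--out proof: the segment's jets stay in the (ball) domain of $W$ by convexity together with compactness of $[0,1]$, and a vanishing--order bound in $x$ holding for each fixed $t$ passes to the $t$-integral because every Taylor coefficient of degree $\leq d$ of the integrand vanishes identically in $t$.
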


\bibliographystyle{alpha}
\bibliography{chern-moser}
\end{document}